\documentclass[10pt]{amsart}

\usepackage{color,graphicx,amssymb}
\usepackage{amsthm}
\usepackage{dsfont}
\usepackage{tikz-cd}

\setlength{\oddsidemargin}{-.08cm}
\setlength{\evensidemargin}{-.08cm}
\setlength{\textwidth}{16.1cm}

\theoremstyle{definition}

\newtheorem{theorem}{Theorem}[section]
\newtheorem{definition}[theorem]{Definition}
\newtheorem{lemma}[theorem]{Lemma}
\newtheorem{corollary}[theorem]{Corollary}
\newtheorem{proposition}[theorem]{Proposition}
\newtheorem{claim}[theorem]{Claim}
\newtheorem{example}[theorem]{Example}

\newtheorem{remark}[theorem]{Remark}
\newtheorem{problem}[theorem]{Problem}
\newtheorem{acknowledgments}[theorem]{Acknowledgments}

\DeclareMathOperator{\Con}{Con}

\DeclareMathOperator{\Aut}{Aut}
\DeclareMathOperator{\Hom}{Hom}
\DeclareMathOperator{\Mod}{Mod}

\DeclareMathOperator{\id}{id}

\DeclareMathOperator{\im}{im}
\DeclareMathOperator{\ar}{ar}

\DeclareMathOperator{\Id}{Id}


\begin{document}

\title{Extensions of Multilinear Module Expansions}
\author{Alexander Wires}
\address{School of Mathematical Sciences, University of Electronic Science and Technology of China, Chengdu,611731, Sichuan, PRC}
\email{awires81@uestc.edu.cn}

\begin{abstract}
We consider the deconstruction/reconstruction of extensions in varieties of algebras which are modules expanded by multilinear operators. The parametrization of extensions determined by abelian ideals with unary actions agrees with the previous development of extensions realizing affine datum in arbitrary varieties of universal algebras. We establish a Wells-type theorem which, for a fixed affine ideal, characterizes those ideal-preserving derivations of a group-trivial extension as a Lie algebra extension of the compatible pairs of derivations of the datum algebras by the cohomological derivations of the datum. For these varieties, we establish a low-dimensional Hochschild-Serre exact sequence associated to an arbitrary extension equipped with an additional affine action.  
\end{abstract}

\maketitle


\section{Introduction}
\vspace{0.3cm}

In this manuscript, we examine the parameters characterizing arbitrary extensions in varieties of $R$-modules expanded by multilinear operations. These algebras can be seen as a special case of Higgin's \cite{higgins} groups with multiple operators formalisms but only slightly broader than Kurosh's formalism of $\Omega$-algebras since we consider modules rather than vector spaces. We describe a low-dimensional cohomology which is replete with the expected machinery of 2-cocycles, 2-coboundaries, derivations and stabilizing automorphisms in order for second-cohomology to classify extensions; in addition, we define a notion of principal derivation connected to special stabilizing automorphisms which determines a first cohomology which is not always defined for cohomologies of nonassociative or general structures. Those extensions realizing affine datum as defined in Wires \cite{wiresI} can be characterized by datum with abelian ideals and unary actions only. The 2-cocycle identity for a subvariety is formalized by a satisfaction relation associated to a multisorted signature formed by the action terms and factor sets; in this way, the variety membership (or equational theory) of extensions is incorporated into cohomology as a parameter. This affords a Galois connection between the lattice of subvarieties and second-cohomology. The notion of a derivation (as opposed to cohomological derivation of datum) makes sense for algebras in these varieties and so we prove a Wells-type theorem characterizing kernel-preserving derivations of a group-trivial extension realizing affine datum by a Lie algebra extension of derivations. A similar result for automorphisms is established in Wires \cite[Thm 1.4]{wiresIII}. Since we are able to define a first cohomology group for any variety of multilinear module expansions, we establish a low-dimension Hochschild-Serre (or inflation/deflation) exact sequence associated to a general (nonabelian) extension equipped with an additional affine action.

The topic and literature on the cohomology or parametrization of extensions in various varieties of modules expanded by bilinear operations is surely too vast to give adequate justice here so those references cited reflect only the author's limited knowledge - mostly to more recent papers. The work parametrizing nonabelian extensions in the present manuscript recovers in a uniform manner the cohomological classification of extensions previously developed for rings (Everett \cite{rings}), associative algebras (Hochschild \cite{hoch}, Agore and Militaru \cite{agore}), Lie algebras (Inassaridze, Khmaladze and Ladra \cite{lie}), Liebniz algebras (Casas, Khmaladze and Ladra \cite{leibniz}), dendriform and bilinear Rota-Baxter algebras (Das and Rathee \cite{das}), Lie-Yamaguti algebras ( Yamaguti \cite{yamaguti}) or conformal algebras (Bakalov, Kac and Voronov\cite{conform1}, Hou and Zhao \cite{conform2}, Smith \cite{smith1}) to give just a few well-studied examples. Here we are able to generalize to any equational class of $R$-modules expanded by any set of multilinear operations. Our Theorem~\ref{thm:HSexact} extends to general extensions in our particular varieties the Hochschild-Serre exact sequence established in Wires \cite{wiresII} for central extensions with an idempotent element in varieties with a difference term. Two applications for the Hochschild-Serre exact sequence (as outlined for groups in Karpilovsky \cite{karpil}) is in the development of the Schur multiplier and its characterization by relative commutators in free presentation, and the connection with perfect algebras (which for groups can be read in Milnor \cite{milnor}). For varieties of multilinear expansions of $R$-modules, the analogous results are already recovered by specialization of the more general results in \cite{wiresII}; for examples of Schur-type results in recent work on some varieties of interest which would fall under the general umbrella of \cite{wiresII}, we may look at Batten \cite{batten1} for Lie algebras, Elyse \cite{elyse1} and Mainellis \cite{mainellis1} for Leibniz algebras and Mainellis \cite{mainellis2,mainellis3} for diassociative algebras. The inspiration for Theorem~\ref{thm:derivations} on kernel-preserving derivations in arbitrary varieties of multilinear module expansions can be found in Hou and Zhao \cite{conform2} where a similar result is established for abelian extensions of associative conformal algebras.

The principal motivation for the present manuscript derives from the work in \cite{wiresI,wiresII} for extensions realizing affine datum in arbitrary varieties of universal algebras. We would like to consider the parametrization of general or ``nonabelian'' extensions of universal algebras and develop proper interpretations for any possible higher cohomologies with general and affine datum. Varieties of $R$-modules expanded by multilinear operations form a particularly well-behaved class of Mal'cev varieties in which the generators of the commutator relations can be given a manageable form; therefore, these classes of algebras are a good place to start. The abelian group reduct and multilinearity of the higher arity operations appears to simplify structure and calculations but still allows for nonabelian extensions. These varieties may serve as a test case or set of examples for continuing the work from \cite{wiresI,wiresII}. In light of Theorem~\ref{thm:HSexact} and \cite{wiresII}, we think there is a good argument to be made that a fair amount of the compiled theory and results presented in Karpilovsky's book \cite{karpil} can be successfully developed for these varieties.

We do not consider higher cohomologies at all in this manuscript. Since we consider nonabelian extensions in arbitrary varieties (so any equational axiomatization), we find it difficult to fit the present development into the chain complex framework of modern cohomology inspired by algebraic topology, even in the affine datum case. In our view, the 2-cocycle condition for groups which is determined by a coboundary map in a chain complex, in the case of general varieties is explicitly tied to the equational theory of extensions; that is, 2-cocycles are interpretations of certain multisorted signatures satisfying some equations in that mixed signature. Our personal intuition is that it is be possible to give a more explicit and concrete interpretation for higher cohomologies analogous to that given by Holt \cite{holt} for groups. Such an explicit description would involve equations among different levels of action terms which may by easier to directly modify in order to describe the ``correct'' multisorted structures for nonabelian cohomologies. In any case, all this deserves its own more focused future effort. We leave these questions for Section~\ref{section:6}.


\section{Preliminaries}\label{section:2}
\vspace{0.3cm}

Let $\mathcal V$ be a variety of $R$-modules for a fixed ring $R$ expanded by multilinear operations indexed by the set $F$; formally, the signature $\tau = \{+,-,0, r : r \in R \} \mathrel{\cup} F$ is the union of the signature $\{+, -, 0, r : r \in R \}$ of an $R$-module with $F$. Note the ring element $r \in R$ in the signature is interpreted in a module as the unary operation which is scalar multiplication by $r$. An algebra $M \in \mathcal V$ can be written in a compact but slightly incorrect notation as $M = \left\langle _{R}M,F^{M} \right\rangle$ where $_{R} M$ is an $R$-module and $F^{M} = \{f^{M}: f \in F \}$ is the part of the signature which names the multilinear operations; that is, for $f \in F$ with arity $\ar f=n$, we have for each coordinate $1 \leq i \leq n$
\[
f(x_{1},\ldots,r \cdot y + z,\ldots,x_{n}) = r \cdot f(x_{1},\ldots,y,\ldots,x_{n}) + f(x_{1},\ldots,z,\ldots,x_{n})
\]
for $r \in R$, $x_{i},y,z \in M$. If we let $_{R} \mathcal M_{F}$ denote the largest variety of $R$-modules expanded by multilinear operation named by $F$, then $\mathcal V \, \leq \,  _{R}\mathcal M_{F}$. Let $\Id \mathcal V$ denote the set of identities of the variety $\mathcal V$.

Congruences $\alpha \in \Con M$ are in bijective correspondence with \emph{ideals} $I \triangleleft M$ which are submodules of $M$ that are \emph{absorbing} for each of the multilinear operations; that is, for each $f \in F$ with $n=\ar f$, $f(x_{1},\ldots,x_{n}) \in I$ whenever some $x_{i} \in I$. For a congruence $\alpha \in \Con M$, the corresponding ideal $I_{\alpha}$ is the $\alpha$-class which contains $0 \in M$. For an ideal $I \triangleleft M$, the corresponding congruence is given by $\alpha_{I} = \{(a,b) \in M^{2}: a-b \in I \} \in \Con M$. For two ideals $I,J \triangleleft M$, the commutator $[I,J]$ is the smallest ideal containing all elements of the form $f(\vec{a})$ where $f$ is a multilinear operation and $\vec{a} \in J^{\ar f}$ with at least one $a_{i} \in I$ or $\vec{a} \in I^{\ar f}$ with at least one $a_{i} \in J$.

Given a function $f: X \rightarrow A$ and $\vec{x} \in X^{n}$ we write $f(\vec{x}):= (f(x_{1}),\ldots,f(x_{n}))$ for the tuple which is the evaluation of the function at each coordinate. We will have to write formulas which sum the values of higher arity functions on substitutions over different subsets of coordinates between fixed tuples. It will be convenient to introduce notational conventions for this. For $0 < n \in \mathds{N}$, we write $[n] = \{1,\ldots,n\}$ for the initial segment of positive integers and $[n]^{\ast} = \{ s \subseteq \{ 1,\ldots,n \}: 0 < |s| < n \}$ for the non-empty proper subsets of $[n]$. Given two tuples $\vec{q},\vec{p} \in Q^{n}$ and $s \in [n]^{\ast}$, we form the tuple $\vec{q}_{s} \left[ \vec{p} \right] \in Q^{n}$
\[
\vec{q}_{s} [\vec{p}] (i) =  
  \begin{cases}
   \vec{p}(i) & ,  i \in s \\
   \vec{q}(i) & , i \not\in s;
  \end{cases}
\]
that is, $\vec{q}_{s} [\vec{p}]$ is formed by substituting the values of the tuple $\vec{p}$ into the tuple $\vec{q}$ at those coordinates specified by $s$. Then for a function $f: Q^{n} \rightarrow M$, we can write the evaluation of the function on the constructed tuple in two ways as $f(\vec{q})_{s}[\vec{p}] = f \left( \vec{q}_{s} [\vec{p}] \right)$. Both ways will be convenient if we have to make additional substitutions of the coordinates over overlapping subsets of coordinates. For $\vec{x} \in Q^{n}$ and $s \in [n]^{\ast}$, $\vec{x}|_{s} \in Q^{s}$ is the tuple $\vec{x}$ restricted to the coordinates $s$.

For an abelian group $A$, suppose we have an operation $f: A^{n} \rightarrow A$ and a map $l: Q \rightarrow A$. Given $s \in [n]^{\ast}$, this defines a function $a(f,s) : Q^n \times A^{n} \rightarrow A$ by the rule $a(f,s)(\vec{q},\vec{a}) := f(l(\vec{q}))_{s}[\vec{a}]$; certainly, the function is independent of the coordinates of $\vec{q}$ in $s$ and independent of the coordinates of $\vec{a}$ outside of $s$. Then by fixing the tuples $\vec{q} \in Q^{n}, \vec{a} \in A^{n}$, we can sum
\begin{align}\label{eqn:1}
\sum_{s \in [n]^{\ast}} a(f,s)(\vec{q},\vec{a})
\end{align}
the values of the function on substitutions over all nonempty proper subsets of the coordinates. Of course, what really matters are the value of the functions over the possibly dependent coordinates so that we could naturally consider the function $a'(f,s): Q^{[n] - s} \times A^{s} \rightarrow A$ where $a'(f,s)(\vec{q}|_{[n]-s},\vec{a}|_{s}) = a(f,s)(\vec{q},\vec{a})$. Since we will later need to consider additional substitutions in our fomulas, we have found the form in Eq.(\ref{eqn:1}) more convenient. An exception is sometimes made for unary subsets $s=\{i\} \in [n]^{\ast}$; in this case, we may sometimes write $a(f,i)(q_{1},\ldots,q_{i-1},a_{i},q_{i+1},\ldots,q_{n})$ rather than $a(f,i)(\vec{q},\vec{a})$ when the choice of $\vec{q} \in Q^{n},\vec{a} \in A^{n}$ is understood.

In writing $a(f,s)$ we decided to take the viewpoint of substituting $\vec{a}$ into the $s$-coordinates of $f(l(\vec{q}))$ for the following reason: if $f$ is multilinear over a module structure on $A$, then for each $\vec{q} \in Q^{n}$, $a(f,s)(\vec{q},-)$ is multilinear over the restriction to the $s$-coordinates over $A$.

The commutator for the algebras in this manuscript is a specialization of the term-condition commutator applicable for universal algebras. A concise exposition in the case of congruence modular varieties can be found in McKenzie and Snow \cite{mckenziesnow}. We do not require an extensive knowledge of the term-condition commutator but the present manuscript is connected to the theory of extensions realizing affine datum expounded in \cite{wiresI,wiresII,wiresIII} - the closest point of contact occurs in Theorem~\ref{thm:affinecohom} and Theorem~\ref{thm:centcohom}. Let $A$ be an algebra and $\alpha \beta, \delta \in \Con A$. We write $C(\alpha, \beta ; \delta)$ and say $\alpha$ \emph{centralizes} $\beta$ \emph{modulo} $\delta$ if for all terms $t(\vec{x},\vec{y})$ and all tuples $\vec{a} \mathrel{\alpha} \vec{b}$, $\vec{c} \mathrel{\beta} \vec{d}$ we have the implication 
\begin{align*}
t(\vec{a},\vec{c}) \mathrel{\delta} t(\vec{a},\vec{d}) \rightarrow t(\vec{b},\vec{c}) \mathrel{\delta} t(\vec{b},\vec{d})
\end{align*}
The \emph{commutator} (\emph{term-condition commutator}) is defined as the intersection $[\alpha,\beta] := \bigcap \{\delta : C(\alpha, \beta ; \delta ) \}$. The short argument for the commutator in rings in \cite{mckenziesnow} can be easily adapted to show $[\alpha_{I}, \alpha_{J}] = \alpha_{[I,J]}$ for the algebras in the present manuscript; thus, the two notions of abelianess agree.


\section{Machinery}\label{section:3}
\vspace{0.3cm}

In this section, we develop the machinery around nonabelian second-cohomology . We begin with the category of extensions which can be realized by three-term exact sequences since congruences are ideal-determined.

\begin{definition}\label{def:extension}
Let $\mathcal V$ be a variety of $R$-modules expanded by multilinear operations. A pair of algebras $(Q,I)$ with $Q,I \in \mathcal V$ is called \emph{datum} in $\mathcal V$. An algebra $M$ is an \emph{extension} of datum $(Q,I)$ if there is an ideal $K \triangleleft M$ such that $K \approx I$ and $M/K \approx Q$.
\end{definition}

Let $\mathrm{EXT} \, \mathcal V$ be the category whose objects are three term exact sequences 
\begin{align}\label{eqn:exactseq}
0 \longrightarrow I \stackrel{i}{\longrightarrow} M \stackrel{\pi}{\longrightarrow} Q \longrightarrow 0
\end{align}
of algebras in $\mathcal V$ and the morphisms are triples $(\alpha,\lambda,\beta)$ of homomorphisms which yield commuting squares 
\[
\begin{tikzcd}
	0 & {I} & {M} & {Q} & 0 \\
	0 & {J} & {N} & {P} & 0
	\arrow[from=1-1, to=1-2]
	\arrow["{i}", from=1-2, to=1-3]
	\arrow["\alpha", from=1-2, to=2-2]
	\arrow["\pi", from=1-3, to=1-4]
	\arrow["\lambda", no head, from=1-3, to=2-3]
	\arrow[from=1-4, to=1-5]
	\arrow["\beta", from=1-4, to=2-4]
	\arrow[from=2-1, to=2-2]
	\arrow["{j}", from=2-2, to=2-3]
	\arrow["\rho", from=2-3, to=2-4]
	\arrow[from=2-4, to=2-5]
\end{tikzcd}
\]
between exact sequences. Composition of morphisms is affected coordinate-wise.

\begin{lemma}
Two extensions are isomorphic in the category $\mathrm{EXT} \, \mathcal V$ if and only if there is a morphism $(\alpha,\lambda,\beta)$ between the two exacts sequences where each component is an isomorphism. 
\end{lemma}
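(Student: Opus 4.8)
The plan is to reduce the statement to the formal definition of an isomorphism in a category together with the observation that, in $\mathrm{EXT}\,\mathcal V$, composition of morphisms is computed coordinatewise and the identity morphism on a sequence $0 \longrightarrow I \stackrel{i}{\longrightarrow} M \stackrel{\pi}{\longrightarrow} Q \longrightarrow 0$ is the triple $(\id_{I},\id_{M},\id_{Q})$. With that in hand, a morphism $(\alpha,\lambda,\beta)$ is an isomorphism in $\mathrm{EXT}\,\mathcal V$ exactly when it admits a two-sided inverse morphism $(\alpha',\lambda',\beta')$, which — reading the composition and identity conditions coordinatewise — is equivalent to each of $\alpha,\lambda,\beta$ having a two-sided inverse in $\mathcal V$, i.e. to each being an isomorphism of algebras. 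This already gives the ``only if'' direction: an isomorphism in $\mathrm{EXT}\,\mathcal V$ has all three components isomorphisms.

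For the converse I would start from a morphism $(\alpha,\lambda,\beta)$ from $0 \longrightarrow I \stackrel{i}{\longrightarrow} M \stackrel{\pi}{\longrightarrow} Q \longrightarrow 0$ to $0 \longrightarrow J \stackrel{j}{\longrightarrow} N \stackrel{\rho}{\longrightarrow} P \longrightarrow 0$ in which $\alpha,\lambda,\beta$ are each isomorphisms of algebras, and form the triple $(\alpha^{-1},\lambda^{-1},\beta^{-1})$ of inverse homomorphisms. The only thing requiring verification is that this triple is again a morphism in $\mathrm{EXT}\,\mathcal V$, i.e. that its two squares commute; this is a one-line diagram chase: composing $\lambda \circ i = j \circ \alpha$ with $\lambda^{-1}$ on the left and $\alpha^{-1}$ on the right yields $i \circ \alpha^{-1} = \lambda^{-1} \circ j$, and composing $\rho \circ \lambda = \beta \circ \pi$ with $\beta^{-1}$ on the left and $\lambda^{-1}$ on the right yields $\pi \circ \lambda^{-1} = \beta^{-1} \circ \rho$. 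Composing $(\alpha,\lambda,\beta)$ with $(\alpha^{-1},\lambda^{-1},\beta^{-1})$ in either order gives, coordinatewise, the identity maps on the respective sequence, hence the identity morphism, so $(\alpha,\lambda,\beta)$ is an isomorphism in $\mathrm{EXT}\,\mathcal V$.

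There is no real obstacle here; the only point worth flagging is the standard fact that in a variety a bijective homomorphism is automatically an isomorphism, so ``isomorphism of algebras'' in the statement may be read either as bijective homomorphism or as homomorphism with homomorphism inverse without affecting the argument. I would also remark — although it is not needed for the lemma as stated — that a short-five-lemma argument using exactness of the two sequences forces $\lambda$ to be an isomorphism as soon as $\alpha$ and $\beta$ are; but since the lemma only asserts the existence of a morphism all of whose components are isomorphisms, this refinement can be omitted.
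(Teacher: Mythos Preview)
Your argument is correct and is the standard categorical verification. Note, however, that the paper states this lemma without proof, so there is nothing to compare against; your write-up supplies exactly the routine justification the paper omits.
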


We can define different equivalence relations on the objects by restricting the type of morphisms which appear in the commuting squares. By composing with isomorphisms, any extension of datum $(Q,I)$ according to Definition~\ref{def:extension} yields an exact sequence in the form of Eq~\eqref{eqn:exactseq}. Our interest will be on reconstructing extension which are concretely built from the algebras $I$ and $Q$.

\begin{definition}\label{def:equivexactseq}
Two extensions $1 \longrightarrow I \stackrel{i}{\longrightarrow} A \stackrel{\pi}{\longrightarrow} Q \longrightarrow 1$ and $1 \longrightarrow I \stackrel{i}{\longrightarrow} M \stackrel{\pi'}{\longrightarrow} Q \longrightarrow 1$ are \emph{equivalent} if there is an isomorphism $(\id_{I},\lambda,\id_{Q})$ in $\mathrm{EXT} \, \mathcal V$ between them.
\end{definition}

The above definition defines an equivalence relation which refines isomorphism types on the objects of the category. Let $\mathrm{EXT}(Q,I)$ denote the set of all equivalence classes denoted by $[M]$ of extensions in the form of Eq~\eqref{eqn:exactseq}. We now examine how extensions of fixed datum $(Q,I)$ can be deconstructed in terms of actions terms and factor sets.

\begin{definition}
Let $\mathcal V$ be a variety of $R$-modules expanded by multilinear operations and $Q,I \in \mathcal V$. An \emph{action} $Q \ast I$ is a sequence of operations $ \ast = \{a(f,s): f \in F, s \in [\ar f]^{\ast} \}$ where $a(f,s):Q^{\ar f} \times I^{\ar f} \rightarrow I$.  
\end{definition}

\begin{definition}
Let $\tau$ be a signature for a variety $\mathcal V$ of $R$-modules expanded by multilinear operations $F$ and $(Q,I)$ be datum in $\mathcal V$. A 2-\emph{cocycle} for the datum $(Q,I)$ is a sequence $T = \{ T_{+}, T_{r}, T_{f}, a(f,s) : r \in R, f \in F, s \in [\ar f]^{\ast} \}$ where  
\begin{enumerate}
	
	\item[(T1)] $T_{+} : Q^{2} \rightarrow I$ such that $T_{+}(x,0)=T_{+}(0,x)=0$;
	
	\item[(T2)] $T_{r} : Q \rightarrow I$ for each $r \in R$ such that $T_{r}(0)=0$;
	
	\item[(T3)] $T_{f} : Q^{\ar f} \rightarrow I$ for $f \in F$ such that $T_{f}(\vec{x})=0$ whenever some $x_{i}=0$;
	
	\item[(T4)] $\{ a(f,s): f \in F, s \in [\ar f]^{\ast} \}$ is an action $Q \ast I$ such that for each $\vec{q} \in Q^{\ar f}$, $a(f,s)(\vec{q},\cdot)|_{s} \in \Hom_{R}(\otimes^{s} \, _{R}I ,\,  _{R}I)$ and $a(f,s)(\vec{q},\cdot) \equiv 0$ for any $\vec{q} \in Q^{\ar f}$ with $\vec{q}(i) = 0$ for some $i \not\in s$.

\end{enumerate}
\end{definition}

The symbols $\{ a(f,s): f \in F, s \in [\ar f]^{\ast} \}$ in a 2-cocycle will be called the \emph{action terms} of the 2-cocycle and the symbols $T_{+}, T_{r}$ and $T_{f}$ the \emph{factor-sets}. A 2-cocycle $T = \{ T_{+}, T_{r}, T_{f}, a(f,s) : r \in R, f \in F, s \in [\ar f]^{\ast} \}$ is a multisorted signature in two diffferent sorts; for example, one sort for the domains of the symbols $T_{+}$, $T_{r}$ and $T_{f}$ and another sort for their codomains, but the symbols $a(f,s)$ use both sorts for the domain. Note that the datum $(Q,I)$ consists of two actual algebras so we have interpretations of the signature $\tau^{I}$ and $\tau^{Q}$ giving the module and multilinear operations for the algebras $I$ and $Q$. A 2-cocycle $T$ for datum $(Q,I)$ is then an interpretation of the multisorted signature $T$ in the multisorted structure $\left\langle I \cup Q, \tau^{I}, \tau^{Q}, T \right\rangle$ which satisfies the properties specified in (T1) - (T4). The structure $\left\langle I \cup Q, \tau^{I}, \tau^{Q}, T \right\rangle$ has $\tau_{I} \cup \tau_{Q} \cup T$ as an appropriate multisorted signature where we have written $\tau_{I}$ for the operation symbols for the sort which have the algebra $I$ as our intended interpretation. In case where the full signature $\tau$ is not stated explicitly but we have named the set of multilinear operations by $F$, we may also informally write the intended structure as $\left\langle _{R} I \cup \, _{R} Q, F^{I}, F^{Q}, T \right\rangle$ where we are using the informal notation $_{R} I \mathrel{\cup} \, _{R} Q$ to denote the different $R$-module structures over the two sorts of $I$ and $Q$.

\begin{definition}
Let $\mathcal V$ be a variety of $R$-modules expanded by multilinear operations $\mathcal F$. Let be datum in $\mathcal V$ and $T$ a 2-cocycle for the datum. The algebra $I \rtimes_{T} Q$ is defined with universe $I \times Q$ and operations
\begin{enumerate}

	\item $\left\langle \, a,x \, \right\rangle + \left\langle \, b,y \, \right\rangle := \left\langle \, a+b + T_{+}(x,y), x+y \, \right\rangle$;
	
	\item $r \cdot \left\langle \, a,x \, \right\rangle := \left\langle \, r \cdot a + T_{r}(x), r \cdot x \, \right\rangle$ for $r \in R$;
	
	\item $F_f \left(\left\langle \, a_1, x_1 \, \right\rangle, \ldots, \left\langle \, a_n, x_n \, \right\rangle \right) := \left\langle \, f(\vec{a}) + \sum_{s \in [n]^{\ast}} a(f,s)(\vec{x},\vec{a}) + T_{f}(\vec{x}), f(\vec{x}) \, \right\rangle$ for $f \in F$ with $n = \ar f$.
	
\end{enumerate}
\end{definition}

Note there is an embedding $i: I \rightarrow I \rtimes_{T} Q$ given by $i(a):= \left\langle a,0 \right\rangle$ and the second-projection $p_{2}: I \rtimes_{T} Q \rightarrow Q$ is an extension over $Q$ with $i(I) = I_{\ker p_2}$. We would like to place a condition on the 2-cocycle $T$ so that $I \rtimes_{T} Q \in \mathcal V$ whenever $(Q,I)$ is datum in $\mathcal V$. This is done by writing a representation of the terms in $I \rtimes_{T} Q$.

\begin{lemma}\label{lem:semident}
Let $\mathcal V$ be a variety of $R$-modules expanded by multilinear operations $F$. Let $T$ be a 2-cocycle for the datum $(Q,I)$. For each term $t(\vec{x})$ in the signature $\tau$ of $\mathcal V$, there are terms $t^{\ast,T}, t^{\partial,T}$ in the multisorted signature $\tau_{I} \cup \tau_{Q} \cup T$ such that $I \rtimes_{T} Q \in \mathcal V$ if and only if $Q,I \in \mathcal V$ and $\left\langle I \cup Q, \tau^{I}, \tau^{Q}, T \right\rangle \vDash t^{\ast,T} + t^{\partial,T} = s^{\ast,T} + s^{\partial,T}$ for all $t=s \in \mathrm{Id} \mathcal V$.
\end{lemma}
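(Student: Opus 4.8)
The plan is to establish, by induction on the structure of a $\tau$-term $t(z_1,\dots,z_n)$, a normal form for the interpretation of $t$ in $I \rtimes_{T} Q$, where each variable $z_i$ is read as a pair $\langle a_i, x_i\rangle$ with $a_i$ of the sort of $I$ and $x_i$ of the sort of $Q$. Concretely, I would prove there are terms $t^{\ast,T}$ and $t^{\partial,T}$ in the multisorted signature $\tau_I \cup \tau_Q \cup T$, valued in the sort of $I$ and with free variables among the $a_i$ and $x_i$, such that
\[
t^{I \rtimes_{T} Q}\bigl(\langle a_1,x_1\rangle,\dots,\langle a_n,x_n\rangle\bigr)
 =
 \bigl\langle\, t^{\ast,T}(\vec a,\vec x) + t^{\partial,T}(\vec a,\vec x),\ \ t^{Q}(\vec x) \,\bigr\rangle ,
\]
where $t^{Q}$ denotes the ordinary interpretation of $t$ in the algebra $Q$. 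That the second coordinate is exactly $t^{Q}(\vec x)$ is immediate from the fact, recorded after the definition of $I\rtimes_T Q$, that the second projection $p_2\colon I\rtimes_T Q \to Q$ is a homomorphism. Heuristically, $t^{\ast,T}$ is the first coordinate one would obtain in the semidirect product with the \emph{same} action terms but all factor-sets set to zero — that is, $t^{I}(\vec a)$ ``twisted'' by the $a(f,s)$ — while $t^{\partial,T}$ absorbs the accumulated factor-set contributions; the exact placement of summands between the two is not essential to the statement but is natural and is what matters in the affine and cohomological applications.

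The inductive construction just reads off the operations of $I\rtimes_T Q$. For $t=z_i$ take $t^{\ast,T}:=a_i$, $t^{\partial,T}:=0$. For $t=t_1+t_2$, clause (1) of the definition gives $t^{\ast,T}:=t_1^{\ast,T}+t_2^{\ast,T}$ and $t^{\partial,T}:=t_1^{\partial,T}+t_2^{\partial,T}+T_{+}(t_1^{Q},t_2^{Q})$; for $t=r\cdot t_1$, clause (2) gives $t^{\ast,T}:=r\cdot t_1^{\ast,T}$ and $t^{\partial,T}:=r\cdot t_1^{\partial,T}+T_{r}(t_1^{Q})$. The substantive case is $t=f(t_1,\dots,t_n)$ with $n=\ar f$. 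Writing $P_i:=t_i^{\ast,T}$, $D_i:=t_i^{\partial,T}$, $X_i:=t_i^{Q}$, and $\vec A$ for the tuple with $A_i:=P_i+D_i$, clause (3) says the first coordinate of the left side equals $f(\vec A)+\sum_{s\in[n]^{\ast}} a(f,s)(\vec X,\vec A)+T_{f}(\vec X)$. Since $f$ is multilinear over the module $I$, $f(\vec A)$ expands as the sum over all $S\subseteq[n]$ of the terms obtained by putting $P_i$ in coordinate $i\notin S$ and $D_i$ in coordinate $i\in S$; and since by (T4) each $a(f,s)(\vec q,\cdot)|_{s}$ is $R$-multilinear, $a(f,s)(\vec X,\vec A)$ expands likewise as a sum over $S\subseteq s$ (coordinates outside $s$ being inert). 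Keeping the unique ``purely starred'' summand ($S=\varnothing$) in each expansion, set
\[
t^{\ast,T}:=f(P_1,\dots,P_n)+\sum_{s\in[n]^{\ast}} a(f,s)(\vec X,\vec P),
\]
with $\vec P=(P_1,\dots,P_n)$, and let $t^{\partial,T}$ be the sum of all the remaining summands together with $T_{f}(\vec X)$. Since each $X_i=t_i^{Q}$ is a $\tau_Q$-term in $\vec x$ and all expansions are finite, $t^{\ast,T}$ and $t^{\partial,T}$ are genuine terms of $\tau_I\cup\tau_Q\cup T$, and by construction their sum is the displayed first coordinate.

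Granting the normal form, the lemma follows. Since a variety is the class of all models of its identities, $I\rtimes_T Q\in\mathcal V$ iff $I\rtimes_T Q\vDash t=s$ for every $t=s\in\mathrm{Id}\,\mathcal V$; by the normal form this holds iff, for every such $t=s$, both $t^{Q}=s^{Q}$ and $t^{\ast,T}+t^{\partial,T}=s^{\ast,T}+s^{\partial,T}$ hold throughout $\langle I\cup Q,\tau^{I},\tau^{Q},T\rangle$, i.e.\ iff $Q\in\mathcal V$ and $\langle I\cup Q,\tau^{I},\tau^{Q},T\rangle\vDash t^{\ast,T}+t^{\partial,T}=s^{\ast,T}+s^{\partial,T}$ for all $t=s\in\mathrm{Id}\,\mathcal V$. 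Finally I would record that one may add the clarifying hypothesis $I\in\mathcal V$ to the right-hand side without changing anything: if $I\rtimes_T Q\in\mathcal V$ then $Q\cong(I\rtimes_T Q)/i(I)$ is a quotient and lies in $\mathcal V$, while $i\colon a\mapsto\langle a,0\rangle$ is an isomorphic embedding — using $T_{+}(0,0)=0$, $T_{r}(0)=0$, $T_{f}(\vec 0)=0$, multilinearity ($f(\vec 0)=0$), and the clause of (T4) forcing $a(f,s)(\vec 0,\cdot)\equiv 0$ because $s$ is proper — so $I$ is (isomorphic to) a subalgebra of $I\rtimes_T Q$ and hence in $\mathcal V$; conversely $Q\in\mathcal V$ is exactly what makes the second-coordinate identities $t^{Q}=s^{Q}$ automatic. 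The one place that needs care — the ``main obstacle'', though it is combinatorial rather than deep — is the inductive step for $f$: checking that the multilinear expansion of $f(\vec A)+\sum_s a(f,s)(\vec X,\vec A)$ genuinely separates into a single starred summand plus a partial summand, each given uniformly by terms of the mixed signature, while respecting that $a(f,s)$ is multilinear only on its $s$-coordinates and that the $Q$-valued entries $\vec X$ are left untouched by the expansion. The rest is a routine structural induction.
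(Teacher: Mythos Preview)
Your proposal is correct and follows essentially the same approach as the paper: an induction on the generation of terms to produce a normal form $\langle\,(\text{first-coordinate term}),\, t^{Q}(\vec x)\,\rangle$, from which the biconditional is read off directly. The paper's proof is terser---it simply asserts the recursive construction and the informal splitting rule (summands using a factor-set go into $t^{\partial,T}$, summands using only action terms go into $t^{\ast,T}$)---while you spell out the inductive clauses explicitly, including the multilinear expansion at the $f$-step.

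One small notational discrepancy worth flagging: the paper separates out $t^{I}(\vec a)$ as its own summand, writing the first coordinate as $t^{I}(\vec a) + t^{\ast,T} + t^{\partial,T}$, whereas you fold $t^{I}(\vec a)$ into your $t^{\ast,T}$. Both conventions make the lemma go through, and you correctly observe that the placement is immaterial to the statement. The practical difference is that the paper's reverse direction uses $I\in\mathcal V$ to cancel $t^{I}=s^{I}$ before invoking the multisorted identity, while your version does not need that cancellation and instead recovers $I\in\mathcal V$ a posteriori from the subalgebra embedding. Since later parts of the paper (e.g.\ the affine-datum remark) rely on the paper's three-summand convention, you should be aware that your $t^{\ast,T}$ differs from the paper's by exactly $t^{I}(\vec a)$.
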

\begin{proof}
By the recursive generation of terms, it can be shown that for each term $t(x_{1},\ldots,x_{n})$ in the signature $\tau$ of $\mathcal V$ there are terms $t^{\ast,T}$ and $t^{\partial,T}$ in the multisorted signature $\tau_{I} \cup \tau_{Q} \cup T$ such that 
\begin{align}\label{eqn:55}
F_{t}(\vec{a},\vec{q}) = \left\langle \, t^{I}(\vec{a}) + t^{\ast,T}(\vec{a},\vec{q}) + t^{\partial,T}(\vec{a},\vec{q}), t^{Q}(\vec{q}) \, \right\rangle
\end{align}
computed in the algebra $I \rtimes_{T} Q$ where we have written $\left( \vec{a}, \vec{q} \right) = \left( \left\langle \, a_{1}, q_{1} \, \right\rangle, \ldots, \left\langle \, a_{n}, q_{n} \, \right\rangle  \right)$. The term $t^{\ast,T}$ is recursively constructed from only the module operations and the actions terms and $t^{\partial,T}$ is recursively constructed from the module operations and the operations in $T$ and must contain a factor-set in its composition tree. Informally, we use the module operations and multilinearity to expand $F_{t}(\vec{a},\vec{q})$ into a sum of various composed operations: all the summands that utilize a factor-set are summed together to form $t^{\partial,T}(\vec{a},\vec{q})$, all the parts that utlize action terms but no factor-sets are summed together to form $t^{\ast,T}(\vec{a},\vec{q})$, and the remaining parts must be just the interpretation of the term $t$ in the algebra $I$.

Assume $I \rtimes_{T} Q \in \mathcal V$. Since $Q$ is a quotient determined by ideal $i(I)$, we have $I,Q \in \mathcal V$. Then for $t=s \in \mathrm{Id} \mathcal V$ we have $t^{I} = s^{I}$ and $F_{t} = F_{s}$. Then Eqn (\ref{eqn:55}) implies $t^{\ast,T} + t^{\partial,T} = s^{\ast,T} + s^{\partial,T}$. Conversely, $I,Q \in \mathcal V$ implies $t^{I}=s^{I}$ and $t^{Q}=s^{Q}$ for $t=s \in \mathrm{Id} \mathcal V$. Then substituting this and $\left\langle I \cup Q, \tau^{I}, \tau^{Q}, T \right\rangle \vDash t^{\ast,T} + t^{\partial,T} = s^{\ast,T} + s^{\partial,T}$ into Eqn (\ref{eqn:55}) yields $F_{t}=F_{s}$; thus, $I \rtimes_{T} Q \in \mathcal V$.
\end{proof}

\begin{definition}\label{def:3}
Let $\tau$ be the signature of a variety $\mathcal V$ of $R$-modules expanded by multilinear operations and $(Q,I)$ datum in the same signature as $\mathcal V$. A 2-cocycle $T$ for the datum $(Q,I)$ is $\mathcal V$-\emph{compatible} if 
\[
\left\langle I \cup Q, \tau^{I}, \tau^{Q}, T \right\rangle \vDash t^{\ast,T} + t^{\partial,T} = s^{\ast,T} + s^{\partial,T} \quad \text{for all}  \quad t=s \in \mathrm{Id} \mathcal V.
\]
\end{definition}

Given an equation $t=s$ appropriate for a variety of multilinear expansions of $R$-modules, we refer to the equation $t^{\ast,T} + t^{\partial,T} = s^{\ast,T} + s^{\partial,T}$ in Definition~\ref{def:3} as the \emph{general 2-cocycle identity} for $T$ of the equation $t=s$. We may refer to the equation $t^{\ast,T} = s^{\ast,T}$ as the \emph{action identity} for compatibility with $t=s$ and the equation $t^{\partial,T} = s^{\partial,T}$ as the \emph{strict 2-cocycle identity} for $T$ of the equation $t=s$. With the help of Lemma~\ref{lem:semident}, we shall later see that for extensions realizing affine datum that includes a fixed $\mathcal V$-compatible action as part of the datum, then it is the strict 2-cocycle identity that determines membership of an extension in a variety.

Given an action $Q \ast I$, we can form a 2-cocycle $T^{\ast}$ for the datum $(Q,I)$ where the action terms of $T^{\ast}$ are the same as the operations in $\ast$ and the factor-sets are exactly zero.

\begin{definition}
Let $\mathcal V$ be a variety of $R$-modules expanded by multilinear operations and $(Q,I)$ datum in the same signature as $\mathcal V$. We say an action $Q \ast I$ is $\mathcal V$-\emph{compatible} if the 2-cocycle $T^{\ast}$ is $\mathcal V$-compatible.
\end{definition}

Since the factor-sets of $T^{\ast}$ are all zero, we must have $t^{\partial,T^{\ast}}=0$ for all terms $t$; therefore, an action  $Q \ast I$ is $\mathcal V$-compatible if and only if $t^{\ast,T^{\ast}} = s^{\ast,T^{\ast}}$ is true for all $t=s \in \mathrm{Id} \mathcal V$ if and only if $I \rtimes_{T^{\ast}} Q \in \mathcal V$. We will also allow ourselves the notational convenience to write $I \rtimes_{\ast} Q = I \rtimes_{T^{\ast}} Q$.

\begin{example}\label{ex:multiequations}
We illustrate by writing the 2-cocycle identities for $f \in F$ to be multilinear. Assume $n=\ar f$ and $r\in R$. The equations for linearity in the i-th coordinate are
\begin{align}\label{eqn:3}
f(x_{1},\ldots,rx_{i},\ldots,x_{n}) = r \cdot f(x_{1},\ldots,x_{n})
\end{align}
and
\begin{align}\label{eqn:4}
f(x_{1},\ldots,x_{i}+y_{i},\ldots,x_{n}) = f(x_{1},\ldots,x_{i},\ldots,x_{n}) + f(x_{1},\ldots,y_{i},\ldots,x_{n}).
\end{align}
To compute the corresponding general 2-cocycle identities of $T$ for datum $(Q,I)$ in $\mathcal V$, we evaluate the corresponding left and right-hand terms in $I \rtimes_{T} Q$. Since $I$ satisfies the identities in $\mathcal V$, we set the first-coordinates equal and delete the identity in the operations of $I$. For Eqn (\ref{eqn:3}), calculating the operations
\begin{align}
F_{f} \left(\left\langle a_{1},x_{1} \right\rangle, \ldots, r \cdot \left\langle a_{i},x_{i} \right\rangle  , \ldots, \left\langle a_{n},x_{n} \right\rangle \right)
\end{align}  
and 
\begin{align}
r \cdot F_{f} \left(\left\langle a_{1},x_{1} \right\rangle, \ldots, \left\langle a_{i},x_{i} \right\rangle  , \ldots, \left\langle a_{n},x_{n} \right\rangle \right)
\end{align}  
yields the 2-cocycle identity
\begin{align*}
f^{I}\left(a_{1},\ldots,T_{r}(x_{i}),\ldots,a_{n} \right) &+ \sum_{s \in [n]^{\ast}} a(f,s)\left( (x_{1},\ldots, r \cdot x_{i},\ldots,x_{n}), (a_{1},\ldots,r \cdot a_{i} + T_{r}(x_{i}),\ldots,a_{n}) \right) \\
+ \ T_{f}(x_{1},\ldots, r \cdot x_{i},\ldots,x_{n}) \ &= \ \sum_{s \in [n]^{\ast}} r \cdot a(f,s)(\vec{x},\vec{a}) + r \cdot T_{f}(\vec{x}) + T_{r}(f^{Q}(\vec{x}))
\end{align*}
for respecting the $r$-action in the i-th coordinate; of course, by cancellation and multilinearity of the action terms this is equivalent to the identity
\begin{align*}
f^{I} &\left(a_{1},\ldots,T_{r}(x_{i}),\ldots,a_{n} \right) + \sum_{i \not\in s \in [n]^{\ast}} a(f,s)\left( (x_{1},\ldots, r \cdot x_{i},\ldots,x_{n}), \vec{a} \right) \\ 
&+ \sum_{i \in s \in [n]^{\ast}} a(f,s)\left( \vec{x}, (a_{1},\ldots,T_{r}(x_{i}),\ldots,a_{n}) \right)
+ T_{f}(x_{1},\ldots, r \cdot x_{i},\ldots,x_{n}) \\
&= \sum_{i \not\in s \in [n]^{\ast}} r \cdot a(f,s)(\vec{x},\vec{a}) + r \cdot T_{f}(\vec{x}) + T_{r}(f^{Q}(\vec{x})).
\end{align*}
In the case $f$ is binary, the identity for the $1^{\mathrm{st}}$-coordinate simplifies to
\begin{align}
\begin{split}
f^{I}(T_{r}(x_1),x_{2}) &+ a(f,2)(r \cdot x_{1}, a_{2}) + a(f,1)(T_{r}(x_{1}),x_{2}) + T_{f}(r \cdot x_{1},x_{2}) \\
&= r \cdot a(f,2)(x_{1},a_{2}) + r \cdot T_{r}(x_{1},x_{2}) + T_{r}(f^{Q}(x_{1},x_{2})).
\end{split}
\end{align}

For Eqn (\ref{eqn:4}), calculating for the operations
\begin{align}
F_{f} \left(\left\langle a_{1},x_{1} \right\rangle, \ldots, \left\langle a_{i},x_{i} \right\rangle + \left\langle b_{i},y_{i} \right\rangle, \ldots, \left\langle a_{n},x_{n} \right\rangle \right)
\end{align}  
and 
\begin{align}
F_{f} \left(\left\langle a_{1},x_{1} \right\rangle, \ldots, \left\langle a_{i},x_{i} \right\rangle  , \ldots, \left\langle a_{n},x_{n} \right\rangle \right) + F_{f} \left(\left\langle a_{1},x_{1} \right\rangle, \ldots, \left\langle b_{i},y_{i} \right\rangle, \ldots, \left\langle a_{n},x_{n} \right\rangle \right)
\end{align}
yields the 2-cocycle identity 
\begin{align*}
f^{I}(a_{1},\ldots,T_{+}(x_{i},y_{i}),\ldots,a_{n}) &+ \sum_{i \not\in s \in [n]^{\ast}} a(f,s)\left( (x_{1},\ldots,x_{i}+y_{i},\ldots,x_{n}), \vec{a} \right) \\
&+ \sum_{i \in s \in [n]^{\ast}} a(f,s)\left( \vec{x}, (a_{1},\ldots,T_{+}(x_{i},y_{i}),\ldots,a_{n}) \right) + T_{f}(x_{1},\ldots,x_{i}+y_{i},\ldots,x_{n}) \\
&= \sum_{i \not\in s \in [n]^{\ast}} a(f,s)\left( \vec{x}, \vec{a} \right) + a(f,s)\left( (x_{1},\ldots,y_{i},\ldots,x_{n}), \vec{a} \right) \\
&+ T_{f}(\vec{x}) + T_{f}(x_{1},\ldots,y_{i},\ldots,x_{n}) + T_{+}(f^{Q}(\vec{x}),f^{Q}(x_{1},\ldots,y_{i},\ldots,x_{n}))
\end{align*}
for additivity of $f \in F$ in the i-th coordinate.

In the case $f$ is binary, the identity for additivity in the $1^{\mathrm{st}}$-coordinate simplifies to
\begin{align*}
f^{I}(T_{+}(x_{1},y_{1}),a_{2}) &+ a(f,1)(T_{+}(x_{1},y_{1}),x_{2}) + a(f,2)(x_{1} + y_{1}, a_{2}) + T_{f}(x_{1}+y_{1},x_{2}) \\
&= a(f,2)(x_1,a_{2}) + a(f,2)(y_{1},a_{2}) + T_{f}(x_{1},x_{2}) + T_{f}(y_{1},x_{2}) + T_{+}(f^{Q}(x_{1},x_{2}),f^{Q}(y_{1},x_{2})).
\end{align*}
\end{example}

In the case of a single binary operation $F = \{ f \}$, it may be more convenient to use the notations $a \circ y := a(f,1)(a,y)$ and $x \ast b := a(f,2)(x,b)$ for the associated action terms.

\begin{example}\label{ex:RotaBaxter}
A \emph{general Rota-Baxter algebra of weight} $\lambda$ is of the form $\left\langle _{R} M, \cdot, P \right\rangle$ where $\cdot$ is a bilinear operation over the $R$-module $M_{R}$ and $P$ is a linear transformation satisfying the identity
\begin{align}\label{eqn:6}
P(x) \cdot P(y) = P(P(x) \cdot y) + P(x \cdot P(y)) + \lambda P(x \cdot y).
\end{align}
If $Q,I$ are Rota-Baxter algebras over the same ring $R$, then a 2-cocycle for the datum $(Q,I)$ is of the form $T=\{T_{+}, T_{r}, T_{\cdot}, \ast, \circ : r \in R \}$ where we are using our convention to write actions associated to bilinear operations. By evaluating
\begin{align*}
F_{P}\left( \left\langle a, x \right\rangle \right) \cdot F_{P}\left( \left\langle b, y \right\rangle \right) =  F_{P} \left( F_{P}\left( \left\langle a, x \right\rangle\right) \cdot \left\langle b, y \right\rangle \right) + F_{P} \left( \left\langle a, x \right\rangle \cdot F_{P}\left( \left\langle b, y \right\rangle\right) \right) + \lambda F_{P} \left( \left\langle a, x \right\rangle \cdot \left\langle b, y \right\rangle  \right)
\end{align*}
and deleting identity Eq (\ref{eqn:6}) calculated in $I$-operations from the first coordinate, we arrive at the general 2-cocycle identity for $T$
\begin{align*}
P(x) \ast P(b) &+ P(a) \circ P(y) + P(a) \cdot T_{P}(y) + T_{P}(x) \cdot P(b) + T_{P}(x) \cdot T_{P}(y) + P(x) \ast T_{P}(y) + T_{P}(x) \ast P(y) \\
&+ T_{\cdot}(P(x),P(y)) \ = \ P(P(x) \ast b) + P(P(a) \circ y) + P(x \ast P(b)) + P(a \circ P(y)) \\
&+ \lambda P(a \circ y) + \lambda P(x \ast b) + P(T_{P}(x) \cdot b) + P(T_{P}(x) \circ y) + P(T_{\cdot}(P(x),y)) + T_{P}(P(x) \cdot y) \\
&+ P(a \cdot T_{P}(y)) + P(x \ast T_{P}(y)) + P(T_{\cdot}(x,P(y))) + T_{P}(x \cdot P(y)) + \lambda P(T_{\cdot}(x,y)) + \lambda T_{P}(x \cdot y) \\
&+ T_{\lambda}(P(x \cdot y)) + T_{+}(P(P(x) \cdot y),P(x \cdot P(y))) + T_{+}( P(P(x) \cdot y), P(x \cdot P(y)) ) + \lambda P(x \cdot y)).
\end{align*}

If we write $s(x,y) := P(x) \cdot P(y)$ and $t(x,y):= P(P(x) \cdot y) + P(x \cdot P(y)) + \lambda \cdot P(x \cdot y)$, then we see that 
\begin{align*}
s^{\ast,T}(a,b,x,y) &= P(x) \ast P(b) + P(a) \circ P(y) \\
t^{\ast,T}(a,b,x,y) &= P(P(x) \ast b) + P(P(a) \circ y) + P(x \ast P(b)) + P(a \circ P(y)) + \lambda P(a \circ y) + \lambda P(x \ast b)
\end{align*}
and so the action identity for compatibility with the Rota-Baxter identity is $s^{\ast,T}(a,b,x,y) = t^{\ast,T}(a,b,x,y)$; that is, 
\begin{align*}
 P(x) \ast P(b) + P(a) \circ P(y) &= P(P(x) \ast b) + P(P(a) \circ y) + P(x \ast P(b)) + P(a \circ P(y)) \\
&+ \ \lambda P(a \circ y) + \lambda P(x \ast b).
\end{align*}
\end{example}

\begin{example}\label{ex:Leibniz}
The identity for a bilinear operation $[-,-]: M^{2} \rightarrow M$ to be \emph{left-Leibniz} is 
\[
[x,[y,z]] = [[x,y],z] + [y,[x,z]].
\]
Then enforcing equality of
\[
F \left( \left\langle a,x \right\rangle, F \left( \left\langle b,y \right\rangle, \left\langle c,z \right\rangle \right) \right) = F \left( F \left( \left\langle a,x \right\rangle, \left\langle b,y \right\rangle \right) , \left\langle c,z \right\rangle \right) + F \left( \left\langle b,y \right\rangle, F \left( \left\langle a,x \right\rangle, \left\langle c,z \right\rangle \right) \right)
\]
and calculating the first coordinate we see that the action identity for compatibility with the left-Leibniz identity is
\begin{align*}
[a, b \circ z ] + [a, y \ast z] &+ a \circ [y,z] + x \ast [b,c] + x \ast (b \circ z) + x \ast (y \ast c) \\
&= [x \ast b,c] + [a \circ y,c] + [x,y] \ast c + [a,b] \circ z +(x \ast b) \circ z + (a \circ y) \circ z + [b,a \circ z] \\
&+ \ [b,x \ast c] + b \circ [x,z] + y \ast [a,c] + y \ast (a \circ z) + y \ast (x \ast c) 
\end{align*}
and the strict 2-cocycle identity for $T$ of a left-Leibniz operation is
\begin{align*}
[a,T(x,z)] + x \ast T(y,z) + T(x,[y,z]) &= [T(x,y),c] + T(x,y) \circ z + T([x,y],z) + [b,T(y,z)] + y \ast T(x,z) \\
& \quad + \ T(y,[x,z]) + T([[x,y],z],[y,[x,z]]).
\end{align*}
\end{example}

\begin{definition}
Let $\mathcal V$ be a variety of $R$-modules expanded by multilinear operations $F$. The set of $\mathcal V$-compatible 2-cocycles of datum $(Q,I)$ is denoted by $Z^{2}_{\mathcal V}(Q,I)$.
\end{definition}

\begin{definition}
Let $\mathcal V$ be a variety of $R$-modules expanded by multilinear operations $F$. Let $(Q,I)$ be datum in the signature of $\mathcal V$ and $T$ a 2-cocycle for the datum. An algebra $M$ \emph{realizes} the 2-cocycle $T$ if there is an extension $\pi: M \rightarrow Q$ with an isomorphism $i: I \rightarrow \ker \pi $ and a lifting $l: Q \rightarrow \ker \pi$ such that
\begin{enumerate}
	
	\item[(R1)] $i \circ T_{+}(x,y) = l(x) + l(y) - l(x+y)$;
	
	\item[(R2)] $i \circ T_{r}(x,y) = r \cdot l(x) - l(r\cdot x)$;
	
	\item[(R3)] $i \circ T_{f}(\vec{x}) = f^{M}(l(\vec{x})) - l(f^{Q}(\vec{x}))$;
	
	\item[(R4)] $i \circ a(f,s)(\vec{x},\vec{a}) = f^{M}(l(\vec{x}))_{s}[\vec{a}]$ for each $f \in F$.

\end{enumerate}
\end{definition}

\begin{lemma}
Let $\mathcal V$ be a variety of $R$-modules expanded by multilinear operations and $(Q,I)$ datum in the signature of $\mathcal V$. If $T$ is a 2-cocycle of the datum, then $I \rtimes_{T} Q$ realizes the 2-cocycle $T$.
\end{lemma}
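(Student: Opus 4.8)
The plan is to exhibit explicitly the data witnessing that $M := I \rtimes_{T} Q$ realizes $T$, namely the embedding $i : I \rightarrow M$, $i(a) = \langle a,0 \rangle$, the projection $\pi := p_{2} : M \rightarrow Q$, and the set-theoretic section $l : Q \rightarrow M$ given by $l(x) := \langle 0, x \rangle$. First I would record that this really is an extension: using $T_{+}(0,0)=0$, $T_{r}(0)=0$, $T_{f}(\vec 0)=0$ from (T1)--(T3), the multilinearity of $f^{I}$ and $f^{Q}$, and the vanishing clauses of (T4), a direct substitution into the three operation rules of $I \rtimes_{T} Q$ shows that $i$ is an injective homomorphism and $p_{2}$ a surjective homomorphism with $\ker p_{2} = i(I)$ (most of this is the remark following the definition of $I \rtimes_{T} Q$). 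Moreover $\pi \circ l = \id_{Q}$, so $l$ is a genuine lifting.

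The bulk of the argument is then the verification of (R1)--(R4), each a short computation inside $M$. For (R1), compute $l(x)+l(y) = \langle T_{+}(x,y), x+y \rangle$ and note that $\langle T_{+}(x,y),0 \rangle + l(x+y) = \langle T_{+}(x,y), x+y \rangle$ since $T_{+}(0,x+y)=0$; hence $l(x)+l(y)-l(x+y) = \langle T_{+}(x,y),0 \rangle = i \circ T_{+}(x,y)$. For (R2), $r \cdot l(x) = \langle T_{r}(x), r\cdot x \rangle$ and the same cancellation against $l(r\cdot x)$ isolates $\langle T_{r}(x),0\rangle = i\circ T_{r}(x)$. For (R3), the operation rule gives $f^{M}(l(\vec x)) = \langle f^{I}(\vec 0) + \sum_{s \in [n]^{\ast}} a(f,s)(\vec x, \vec 0) + T_{f}(\vec x),\ f^{Q}(\vec x) \rangle = \langle T_{f}(\vec x), f^{Q}(\vec x) \rangle$, because $f^{I}(\vec 0)=0$ and each $a(f,s)(\vec x,\vec 0)=0$ by the multilinearity clause of (T4); subtracting $l(f^{Q}(\vec x))$ leaves $i\circ T_{f}(\vec x)$.

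The one step needing care is (R4). Here $f^{M}(l(\vec x))_{s}[\vec a]$ is $F_{f}$ applied to the mixed tuple with $i(a_{i}) = \langle a_{i},0\rangle$ in the coordinates $i \in s$ and $l(x_{i}) = \langle 0,x_{i}\rangle$ in the coordinates $i \notin s$. Expanding by the operation rule, the $I$-component is $f^{I}(\vec b) + \sum_{t \in [n]^{\ast}} a(f,t)(\vec y, \vec b) + T_{f}(\vec y)$, where $\vec b$ is supported in $s$ (value $a_{i}$ there, $0$ elsewhere) and $\vec y$ is supported in $[n]\setminus s$ (value $x_{i}$ there, $0$ elsewhere), and the $Q$-component is $f^{Q}(\vec y)$. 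Since $s$ is a nonempty proper subset of $[n]$, both $\vec b$ and $\vec y$ have a zero coordinate, so $f^{I}(\vec b) = f^{Q}(\vec y) = T_{f}(\vec y) = 0$. The crucial combinatorial point is that in $\sum_{t} a(f,t)(\vec y,\vec b)$ every summand with $t \neq s$ vanishes: if $s \not\subseteq t$, pick $i \in s \setminus t$; then $i \notin t$ and $\vec y(i)=0$, so the term dies by the second vanishing clause of (T4); if $t \not\subseteq s$, pick $i \in t \setminus s$; then $i \in t$ and $\vec b(i)=0$, so the term dies by multilinearity of $a(f,t)(\vec y, \cdot)|_{t}$. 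Hence only $t=s$ survives, and $a(f,s)(\vec y,\vec b) = a(f,s)(\vec x, \vec a)$ because $a(f,s)$ ignores the $\vec y$-coordinates lying in $s$ and the $\vec b$-coordinates lying outside $s$. Therefore $f^{M}(l(\vec x))_{s}[\vec a] = \langle a(f,s)(\vec x,\vec a),0 \rangle = i \circ a(f,s)(\vec x,\vec a)$, which is (R4).

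I expect this last vanishing argument to be the only nonroutine part of the proof; (R1)--(R3) are pure bookkeeping with the normalization conditions (T1)--(T3), and the extension structure is essentially already in hand from the remark after the definition. No appeal to $\mathcal V$-compatibility of $T$ is needed, since realizing $T$ only concerns the raw $\tau$-algebra $I \rtimes_{T} Q$ and not its membership in $\mathcal V$.
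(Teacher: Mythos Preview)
Your proof is correct and follows essentially the same approach as the paper: both fix the lifting $l(x)=\langle 0,x\rangle$ and verify (R1)--(R4) by direct computation using the normalization conditions (T1)--(T4). Your treatment of (R4) is in fact more explicit than the paper's, which simply cites (T4) to cancel the $t\neq s$ summands, whereas you spell out the two cases $s\not\subseteq t$ and $t\not\subseteq s$.
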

\begin{proof}
Fix the lifting $l: Q \rightarrow I \times Q$ defined by $l(x) = \left\langle 0, x \right\rangle$. We calculate for the operations. For the module operations, we have
\begin{align*}
l(x) + l(y) - l(x+y) &= \left\langle \, 0, x \, \right\rangle + \left\langle \, 0, y \, \right\rangle - \left\langle \, 0, x + y \, \right\rangle \\
&= \left\langle \, T_{+}(x,y), x + y \, \right\rangle + \left\langle \, - T_{+}(x+y,-x-y), -x - y \, \right\rangle \\
&= \left\langle \, T_{+}(x,y), 0 \, \right\rangle
\end{align*}
and 
\begin{align*}
r \cdot l(x) - l(r\cdot x) = r \cdot \left\langle \, 0, x \, \right\rangle - \left\langle \, 0, r \cdot x \, \right\rangle &= \left\langle \, T_{r}(x), r \cdot x \, \right\rangle + \left\langle \, - T_{+}(r \cdot x, - r \cdot x ), - r \cdot x \, \right\rangle \\
&= \left\langle \, T_{r}(x), 0 \, \right\rangle.
\end{align*}
For $f \in F$ with $n = \ar f$, we have
\begin{align*}
F_{f} \left( l(\vec{x}) \right) - l(f^{Q}(\vec{x})) &= \left\langle \, f^{I}(\vec{0}) + \sum_{s \in  [n]^{\ast}} a(f,s)(\vec{x},\vec{0}) + T_{f}(\vec{x}) , f^{Q}(\vec{x}) \, \right\rangle - \left\langle 0, f^{Q}(\vec{x}) \right\rangle \\
&= \left\langle \, T_{f}(\vec{x}) , f^{Q}(\vec{x}) \, \right\rangle + \left\langle -T_{+}(f^{Q}(\vec{x}),-f^{Q}(\vec{x})), -f^{Q}(\vec{x}) \right\rangle \\
&= \left\langle T_{f}(\vec{x}) , 0 \right\rangle
\end{align*}
and for $\emptyset \neq s \subseteq [n]$, 
\begin{align*}
F_{f}\left( l(\vec{x}) \right)_{s} [i(\vec{a})]  = \left\langle \, f^{I}(\vec{0})_{s}[\vec{a}] + \sum_{r \in [n]^{s}} a(f,r)\left( \vec{x}_{s}[\vec{0}], \vec{0}_{s}[\vec{a}] \right) + T_{f}(\vec{x})_{s}[\vec{0}] ,  \, f^{Q}(\vec{x})_{s}[\vec{0}] \right\rangle  &= \left\langle \, a(f,s)(\vec{x},\vec{a}) , 0 \, \right\rangle 
\end{align*}
where we have used the (T4) property of the action terms to cancel in the sum those terms for $r \neq s$.
\end{proof}

\begin{theorem}\label{thm:multirep}
Let $\mathcal V$ be a variety of $R$-modules expanded by multilinear operations. Let $(Q,I)$ be datum in $\mathcal V$ and $\pi: M \rightarrow Q$ an extension with $\ker \pi = I$. Then $M \in \mathcal V$ if and only if $\pi : M \rightarrow Q$ realizes a $\mathcal V$-compatible 2-cocycle $T$ of the datum such that $M \approx I \rtimes_{T} Q$.
\end{theorem}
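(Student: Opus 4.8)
The plan is to prove both directions by producing, from an arbitrary extension $\pi\colon M\to Q$ with $\ker\pi=I$, an explicit $2$-cocycle whose twisted product $I\rtimes_T Q$ is isomorphic to $M$, and then invoking Lemma~\ref{lem:semident} to convert ``$M\in\mathcal V$'' into ``$T$ is $\mathcal V$-compatible.''

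\emph{The forward direction.} Suppose $M\in\mathcal V$. First I would choose a set-theoretic section (lifting) $l\colon Q\to M$ of $\pi$ with $l(0)=0$; this is possible since $\pi$ is surjective, and the normalization $l(0)=0$ can be arranged by composing with a translation in the ideal. Identifying $I$ with $\ker\pi$ via $i$, every element of $M$ has a unique expression $i(a)+l(x)$ with $a\in I$, $x\in Q$, because $i(a)+l(x)=i(b)+l(y)$ forces $x=\pi(l(x))=\pi(l(y))=y$ and then $a=b$. This gives a bijection $M\approx I\times Q$. Now define the candidate $2$-cocycle $T=\{T_+,T_r,T_f,a(f,s)\}$ exactly by the realization equations (R1)--(R4): set $i\circ T_+(x,y):=l(x)+l(y)-l(x+y)$, $i\circ T_r(x):=r\cdot l(x)-l(r\cdot x)$, $i\circ T_f(\vec x):=f^M(l(\vec x))-l(f^Q(\vec x))$, and $i\circ a(f,s)(\vec x,\vec a):=f^M(l(\vec x))_s[i(\vec a)]$. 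Each of these lands in $\ker\pi=I$ because applying $\pi$ kills the right-hand side (using that $i(I)=\ker\pi$ and the absorption property of ideals for the $a(f,s)$). I would then verify the normalization conditions (T1)--(T3) from $l(0)=0$, and the multilinearity/annihilation conditions (T4) from the multilinearity of $f^M$ together with $l(0)=0$ and $i(I)\triangleleft M$ being absorbing. With $T$ in hand, a direct computation shows the bijection $i(a)+l(x)\leftrightarrow\langle a,x\rangle$ transports the operations of $M$ to precisely the operations defining $I\rtimes_T Q$: expanding $(i(a)+l(x))+(i(b)+l(y))$ and using (R1) gives the twisted sum, and expanding $f^M(i(a_1)+l(x_1),\dots)$ by multilinearity and collecting terms by the conventions of Section~\ref{section:3} produces $f^I(\vec a)+\sum_{s}a(f,s)(\vec x,\vec a)+T_f(\vec x)$ in the first coordinate. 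Hence $M\approx I\rtimes_T Q$, and since $M\in\mathcal V$ this forces $I\rtimes_T Q\in\mathcal V$, so by Lemma~\ref{lem:semident} $T$ is $\mathcal V$-compatible; thus $\pi$ realizes the $\mathcal V$-compatible cocycle $T$.

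\emph{The reverse direction.} Conversely, suppose $\pi$ realizes a $\mathcal V$-compatible $2$-cocycle $T$ with $M\approx I\rtimes_T Q$. By Definition~\ref{def:3} the compatibility of $T$ means $\langle I\cup Q,\tau^I,\tau^Q,T\rangle\vDash t^{\ast,T}+t^{\partial,T}=s^{\ast,T}+s^{\partial,T}$ for all $t=s\in\Id\mathcal V$; since $Q,I\in\mathcal V$ (as $I$ is an ideal of the extension and $Q$ its quotient), Lemma~\ref{lem:semident} gives immediately $I\rtimes_T Q\in\mathcal V$, and therefore $M\in\mathcal V$.

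\emph{Main obstacle.} The routine-looking but genuinely delicate step is the bookkeeping in the forward direction showing that the operations of $M$, written in the coordinates $i(a)+l(x)$, match term-for-term the defining operations of $I\rtimes_T Q$ --- in particular getting the $f$-operation right, where one must expand $f^M(i(a_1)+l(x_1),\dots,i(a_n)+l(x_n))$ by multilinearity into $2^n$ summands, recognize the all-$l$ summand as $l(f^Q(\vec x))+i(T_f(\vec x))$ via (R3), the all-$i$ summand as $i(f^I(\vec a))$ since $i$ is a homomorphism, and each mixed summand supported on $s\in[\ar f]^\ast$ as $i\big(a(f,s)(\vec x,\vec a)\big)$ via (R4), while simultaneously tracking the correction terms $T_+$ that arise from re-summing these pieces back into the normal form $i(\,\cdot\,)+l(\,\cdot\,)$. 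This is exactly the content implicitly packaged in Lemma~\ref{lem:semident}'s proof via the terms $t^{\ast,T},t^{\partial,T}$, so I would lean on that recursive construction rather than redo the expansion by hand; everything else (landing in $I$, the normalization axioms, the two applications of Lemma~\ref{lem:semident}) is straightforward.
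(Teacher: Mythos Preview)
Your proposal is correct and follows essentially the same route as the paper: choose a lifting $l$ with $l(0)=0$, define $T$ via (R1)--(R4), check (T1)--(T4), exhibit the bijection between $M$ and $I\times Q$, verify it transports the operations of $M$ to those of $I\rtimes_T Q$ by expanding via multilinearity, and invoke Lemma~\ref{lem:semident} for $\mathcal V$-compatibility in both directions. The paper writes the isomorphism in the forward direction as $\psi(m)=\langle m - l\circ\pi(m),\pi(m)\rangle$, which is exactly the inverse of your map $\langle a,x\rangle\mapsto i(a)+l(x)$.

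One small remark on your ``main obstacle'': the $T_+$ correction terms you anticipate when re-summing into normal form do not in fact arise. Since addition in $M$ is the ambient abelian group operation, once the $2^n$ summands of $f^M(i(a_1)+l(x_1),\dots)$ are identified as $i(f^I(\vec a))$, $i\big(a(f,s)(\vec x,\vec a)\big)$, and $f^M(l(\vec x))=i(T_f(\vec x))+l(f^Q(\vec x))$, their sum in $M$ is simply $i\big(f^I(\vec a)+\sum_s a(f,s)(\vec x,\vec a)+T_f(\vec x)\big)+l(f^Q(\vec x))$ with no further adjustment; the factor-set $T_+$ only measures the defect $l(x)+l(y)-l(x+y)$ and plays no role here since only a single $l$-term appears. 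The computation is therefore cleaner than you fear, and the paper carries it out directly without appealing to the recursion in Lemma~\ref{lem:semident}.
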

\begin{proof}
Let $M \in \mathcal V$ and $\pi: M \rightarrow Q$ a surjective homomorphism with $\ker \pi = I$. Choose a lifting $l : Q \rightarrow M$ of $\pi$ and define a 2-cocycle $T$ by the rules
\begin{enumerate}

	\item $T_{+}(x,y) := l(x) + l(y) - l(x+y)$;
	
	\item $T_{r}(x) := r \cdot l(x) - l(r\cdot x)$;
	
	\item $T_{f}(\vec{x}) := f^{M}(l(\vec{x})) - l(f^{Q}(\vec{x}))$;
	
	\item $i \circ a(f,s)(\vec{x},\vec{a}) = f^{M}(l(\vec{x}))_{s}[\vec{a}]$ for each $f \in F$.

\end{enumerate}
It is easy to see that properties (T1)-(T4) hold so that $T$ is a 2-cocycle for datum $(Q,I)$. Define $\psi: M \rightarrow I \rtimes_{T} Q$ by $\psi(a):= \left\langle a - l \circ \pi(a),\pi(a)  \right\rangle$. Since $l$ is a lifting of $\pi$, every $a \in M$ is uniquely represented as $a = \left( a - l \circ \pi(a) \right) + l \circ \pi(a)$ with $a - l \circ \pi(a) \in I$; consequently, $\psi$ is bijective. We verify $\psi$ is a homomorphism. Take $f \in F$ with $n = \ar f$. For $\vec{m} \in M^{n}$, write $x_{i}=\pi(m_{i})$ and $a_{i}= m_{i} - l (x_{i})$ so that we have $m_{i} = a_{i} + l(x_{i})$ with $a_{i} \in I, x_{i} \in Q$; thus, $\psi(m_{i}) = \left\langle a_{i}, x_{i} \right\rangle$. Then using multilinearity of $f$ we have
\begin{align*}
\psi \left(f^{M}(\vec{m}) \right) &= \left\langle f^{M}(\vec{m}) - l \circ \pi (f^{M}(\vec{m})) , \pi \circ f^{M}(\vec{m}) \right\rangle \\
&= \left\langle  f^{M}(a_{1} + l(x_{1}),\ldots,a_{n} + l(x_{n})) - l(f^{Q}(\vec{x})) , f^{Q}(\vec{x}) \right\rangle \\ 
&= \left\langle f^{M}(\vec{a}) + f^{M}(l(\vec{x})) + \sum_{s \in [n]^{\ast}} f^{M}(l(\vec{x}))_{s}[\vec{a}] - l(f^{Q}(\vec{x})) , f^{Q}(\vec{x}) \right\rangle \\
&= \left\langle f^{I}(\vec{a}) + \sum_{s \in [n]^{\ast}} a(f,s)(\vec{x},\vec{a}) + T_{f}(\vec{x}) , f^{Q}(\vec{x}) \right\rangle \\
&= F_{f}\left( \left\langle a_{1}, x_{1} \right\rangle, \ldots, \left\langle a_{n}, x_{n} \right\rangle \right) = F_{f} \left( \psi(\vec{m}) \right).
\end{align*}
We also see that
\begin{align*}
\psi( r \cdot m_{1} + m_{2} ) &= \left\langle  r \cdot a_{1} + a_{2} + r\cdot l(x_{1}) + l(x_{2}) - l(r \cdot x_{1} + x_{2}) , r \cdot x_{1} + x_{2} \right\rangle \\
&= \left\langle  r \cdot a_{1} + a_{2} + r\cdot l(x_{1}) - l(r \cdot x_{1}) + l(r \cdot x_{1}) + l(x_{2}) - l(r \cdot x_{1} + x_{2})  , r \cdot x_{1} + x_{2} \right\rangle \\
&= \left\langle  r \cdot a_{1} + T_{r}(x_{1}) + a_{2} + T_{+}(r \cdot x_{1},x_{2}) , r \cdot x_{1} + x_{2} \right\rangle \\
&= \left\langle  r \cdot a_{1} + T_{r}(x_{1}), r \cdot x_{1} \right\rangle + \left\langle a_{2}, x_{2} \right\rangle\\
&= r \cdot \left\langle a_{1}, x_{1} \right\rangle + \left\langle a_{2}, x_{2} \right\rangle = r \cdot \psi(m_{1}) + \psi(m_{2});
\end{align*}
altogether, $M \approx I \rtimes_{T} Q$. Since $M \in \mathcal V$, Lemma~\ref{lem:semident} implies $T$ is $\mathcal V$-compatible.

Conversely, assume $T$ is a $\mathcal V$-compatible 2-cocycle of the datum $(Q,I)$. Then since we assumed $Q,I \in \mathcal V$, Lemma~\ref{lem:semident} implies $ M \approx I \rtimes_{T} Q \in \mathcal V$.
\end{proof}

Lemma~\ref{lem:semident} and Theorem~\ref{thm:multirep} guarantee that the algebras $I \rtimes_{T} Q$ for $\mathcal V$-compatible 2-cocycles $T$ for datum $(Q,I)$ provide up to isomorphism all extensions of $Q$ by $I$; however, the identification in Theorem~\ref{thm:multirep} depends on the choice of a lifting. Choosing different liftings lead to a combinatorial notion of equivalence of extensions via the 2-cocycles.

\begin{definition}\label{def:30}
Let $\mathcal V$ be a variety of $R$-modules expanded by multilinear operations $F$ and $(Q,I)$ datum in $\mathcal V$. A 2-\emph{coboundary} for the datum the $(Q,I)$ is a sequence $G = \{G_{+}, G_{r}, G_{f}, g(f,s): r \in R, f \in F, s \in [\ar f]^{\ast} \}$ such that there exists an action $Q \ast I$ and a function $h : Q \rightarrow I$ with $h(0)=0$ which satisfies
\begin{enumerate}

	\item[(B1)] $G_{+}(x,y) = h(x) + h(y) - h(x+y)$;
	
	\item[(B2)] $G_{r}(x) = r \cdot h(x) - h(r \cdot x)$;
	
	\item[(B3)] $G_{f}(\vec{x}) = \sum_{s \in [\ar f]^{\ast}} (-1)^{1+|s|} a(f,s)(\vec{x},h(\vec{x})) + (-1)^{1+\ar f}f^{I}(h(\vec{x})) - h(f^{Q}(\vec{x}))$;
	
	\item[(B4)] $g(f,s)(\vec{x},\vec{a}) = \sum_{s \subsetneq r \subseteq [\ar f]^{\ast}} (-1)^{1 + |r| - |s|} a(f,r)(\vec{x},h(\vec{x}))_{s}[ \vec{a} ] + (-1)^{1 + |\ar f| - |s|}f^{I}\left( h(\vec{x}) \right)_{s}[ \vec{a} ]$.

\end{enumerate}
\end{definition}

The map $h:Q \rightarrow I$ in Definition~\ref{def:30} is said to \emph{determine} or \emph{witness} the 2-coboundary. The \emph{null} 2-coboundary has all entries zero.

\begin{definition}
Let $\mathcal V$ be a variety of $R$-modules expanded by multilinear operations $F$. The set of 2-coboundaries of the datum $(Q,I)$ is denoted by $B^{2}(Q,I)$.
\end{definition}

Note the notation for the set of 2-coboundaries has no index for the variety. We shall show in the comments after Theorem~\ref{thm:10} that this is because $B^{2}(Q,I) \subseteq Z^{2}_{\mathcal V}(Q,I)$ for any variety $\mathcal V$ containing the datum.

\begin{definition}
The 2-cocycles $T$ and $T'$ for datum $(Q,I)$ are \emph{equivalent}, written $T \sim T'$, if $T - T' \in B^{2}(Q,I)$.
\end{definition}

This is indeed an equivalence relation on 2-cocycles but it will be easier to see this by relating it to an equivalence on extensions given in Definition~\ref{def:equivexactseq}. Let us first observe that the combinatorial definition of a 2-coboundary is derived by choosing different liftings for an extension.

\begin{lemma}\label{lem:2coboundmulti}
Let $\mathcal V$ be a variety of $R$-modules expanded by multilinear operations $F$ and $\pi : M \rightarrow Q$ an extension with $\ker \pi = I$. If $l,l': Q \rightarrow M$ are liftings of $\pi$ which respectively define 2-cocycles $T$ and $T'$ by $(R1) - (R4)$, then $T \sim T'$. 
\end{lemma}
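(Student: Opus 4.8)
The plan is to produce the witnessing map explicitly. Since $l$ and $l'$ are both liftings of the same surjection $\pi$, for every $x \in Q$ the element $l(x) - l'(x)$ lies in $\ker \pi = I$, so we may define $h \colon Q \to I$ by $i \circ h(x) := l(x) - l'(x)$ (identifying $I$ with $\ker\pi$ via $i$); because liftings may be chosen to send $0$ to $0$, or because $T_+, T_+'$ both vanish on $(0,0)$, we get $h(0) = 0$. The claim is that this $h$, together with the action $Q \ast I$ coming from $T'$ (whose action terms are the $a'(f,s)$ defined by (R4) for $l'$), witnesses $T - T'$ as a 2-coboundary, i.e. verifies (B1)--(B4) of Definition~\ref{def:30}.

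First I would handle the module coordinates, which are the easiest: substituting $l = l' + i\circ h$ into the defining formulas (R1) and (R2) for $T_+, T_r$ and subtracting the corresponding formulas for $T_+', T_r'$ gives immediately
\[
T_+(x,y) - T_+'(x,y) = h(x) + h(y) - h(x+y), \qquad T_r(x) - T_r'(x) = r\cdot h(x) - h(r\cdot x),
\]
which are exactly (B1) and (B2). Next I would treat the action terms (B4): expanding $f^M(l(\vec x))_s[\vec a] = f^M\big(l'(\vec x) + i\circ h(\vec x)\big)_s[\vec a]$ by multilinearity of $f^M$ in each coordinate produces a sum over all ways of replacing, on the coordinates in $s$, either $l'$ or $h$, and on the coordinates outside $s$ either $l'$ or $h$ as well; collecting terms by the subset $r$ of coordinates carrying an $h$ (with $r \supseteq s$ forced on the $s$-coordinates once we substitute $\vec a$ there — actually $\vec a$ already sits on $s$, so the bookkeeping is over which of the remaining coordinates get $h$), matching this against (R4) for $l'$ and the absorption/multilinearity properties (T4), and comparing with the alternating-sign formula in (B4), should give the identity; this is the bookkeeping-heavy step. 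Finally (B3) for the factor sets $T_f$ is the analogous computation without a distinguished coordinate set: expand $f^M(l(\vec x)) = f^M(l'(\vec x) + i\circ h(\vec x))$ by multilinearity into the full sum over subsets $s \subseteq [\ar f]$ of coordinates carrying $h$, peel off the $s = \emptyset$ term (which is $f^M(l'(\vec x))$) and the $s = [\ar f]$ term (which is $f^I(h(\vec x))$, giving the $(-1)^{1+\ar f}$ contribution), use (R4) for $l'$ on the intermediate terms, and subtract the (R3) formula for $T_f'$ together with the $-h(f^Q(\vec x))$ term; the signs $(-1)^{1+|s|}$ in (B3) should emerge from this expansion combined with the sign conventions already baked into (B4)/(R4).

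The main obstacle will be the combinatorial sign and indexing bookkeeping in verifying (B4), and to a lesser extent (B3): one must be careful that the multilinear expansion of $f^M$ applied to a sum of two arguments in every coordinate is organized exactly the way Definition~\ref{def:30} organizes the $a(f,s)$ and $f^I$ contributions, so that the alternating signs $(-1)^{1+|r|-|s|}$ and $(-1)^{1+|\ar f|-|s|}$ come out correctly after re-indexing by the ``$h$-support'' subset. Everything else — the module coordinates, the reduction using $\ker\pi = I$ and the $h(0)=0$ normalization — is routine. Once (B1)--(B4) are checked for this particular $h$ and this particular action, we conclude $T - T' \in B^2(Q,I)$, i.e. $T \sim T'$, as required.
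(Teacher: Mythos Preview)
Your overall strategy is correct and matches the paper's: define $h$ with $i\circ h = l - l'$ and verify (B1)--(B4). The module identities (B1), (B2) go through exactly as you say.

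The gap is in your choice of auxiliary action. With $h = l - l'$ and the action $a'$ coming from $l'$, a single multilinear expansion of $l = l' + h$ produces
\[
(T - T')_f(\vec x) \;=\; \sum_{s \in [n]^*} a'(f,s)(\vec x, h(\vec x)) \;+\; f^I(h(\vec x)) \;-\; h(f^Q(\vec x)),
\]
and similarly for the action components --- all coefficients are $+1$. This does \emph{not} match the alternating signs $(-1)^{1+|s|}$ and $(-1)^{1+|r|-|s|}$ in (B3)/(B4), so $a'$ together with $h$ does not witness $T - T'$ as a 2-coboundary in the sense of Definition~\ref{def:30}. The signs do not ``emerge'' from a single expansion; there is a second step you are missing.

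The paper resolves this by taking the auxiliary action to be $a$ (the one defined by $l$, not $l'$). After the first expansion $l = l' + h$ pulls out the $T'$-terms and leaves a plus-sign sum involving $l'$, one re-expands each summand via $l' = l - h$; this second expansion introduces the factors $(-1)^{|u|-|s|}$, and summing over the inner index collapses to the required $(-1)^{1+|u|}$ by inclusion--exclusion. That double expansion is precisely the combinatorial point that makes (B3)/(B4) come out. (Equivalently, you could swap roles: with witness $-h = l' - l$ and auxiliary action $a'$, the same argument shows $T' - T \in B^2(Q,I)$.) Once you make that adjustment, the rest of your outline goes through.
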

\begin{proof}
Define $h = l - l': Q \rightarrow I$. We will show $h$ determines the 2-coboundary $T-T'$. We see that 
\begin{align*}
T_{+}(x,y) = l(x) + l(y) - l(x+y) &= l'(x) + h(x) + l'(y) + h(y) - \left( l'(x+y) + h(x+y) \right) \\
&= h(x) + h(y) - h(x+y) + T'_{+}(x,y)
\end{align*}
and
\[
T_{r}(x) = r \cdot l(x) - l(r \cdot x) = r \cdot (l'(x) + h(x)) - \left( l'(r \cdot x) + h(r \cdot x) \right) = r \cdot h(x) - h(r \cdot x) + T'_{r}(x)
\]
which covers (B1) and (B2) in Definition~\ref{def:30}. Now fix $f \in F$ with $n=\ar f$. By realization, we see that
\begin{align*}
T_{f}(\vec{x}) &= f^{M} \left( l'(\vec{x}) \right) + h(\vec{x})) - l(f^{Q}(\vec{x})) \\ 
&= \sum_{r \in [n]^{\ast}} f^{M}\left( l'(\vec{x}) \right)_{r} [h(\vec{x})] + f^{I}(h(\vec{x})) - h(f^{Q}(\vec{x})) + f^{M} \left( l'(\vec{x}) \right) - l'(f^{Q}(\vec{x})) \\ 
&= \sum_{r \in [n]^{\ast}} f^{M}\left( l(\vec{x}) - h(\vec{x}) \right)_{r} [h(\vec{x})] + f^{I}(h(\vec{x})) - h(f^{Q}(\vec{x})) + T'_{f}(\vec{x}) \\
&= \sum_{r \in [n]^{\ast}} \sum_{r \subseteq s \subseteq [n]} (-1)^{|s|-|r|} f^{M}\left( l(\vec{x}) \right)_{s}[h(\vec{x})] + f^{I}(h(\vec{x})) - h(f^{Q}(\vec{x})) + T'_{f}(\vec{x}) \\ 
&= \sum_{s \in [n]^{\ast}} \left( \sum_{\emptyset \neq r \subseteq s } (-1)^{|s| - |r|}  \right) f^{M} \left( l(\vec{x}) \right)_{s}[h(\vec{x})] + \left(1 + \sum_{r \in [n]^{\ast}} (-1)^{n - |r|} \right) f^{I} \left( h(\vec{x}) \right) - h(f^{Q}(\vec{x})) + T'_{f}(\vec{x}) \\ 
&= \sum_{s \in [n]^{\ast}} (-1)^{1+|s|}f^{M} \left( l(\vec{x}) \right)_{s}[h(\vec{x})] + (-1)^{1+n}f^{I} \left( h(\vec{x}) \right) - h(f^{Q}(\vec{x})) + T'_{f}(\vec{x}) \\
&= \sum_{s \in [n]^{\ast}} (-1)^{1+|s|} a(f,s)(\vec{x},h(\vec{x})) + (-1)^{1+n}f^{I}(h(\vec{x})) - h(f^{Q}(\vec{x})) + T'_{f}(\vec{x})
\end{align*}
which covers (B3). For an action term $a(f,s)$ with $s \in [n]^{\ast}$, we use realization and the previous calculation applied to the action term as a function of $Q^{n-s}$ to get
\begin{align*}
a(f,s)(\vec{x},\vec{a}) &= f(l(\vec{x}))_{s}[\vec{a}] \\
&= \sum_{t \in ([n]-s)^{\ast}} (-1)^{1+|t|} a(f,t)\left( \vec{x},h(\vec{x}) \right)_{s}[\vec{a}] + (-1)^{1+|n|-|s|} f^{I}\left( h(\vec{x})\right)_{s}[\vec{a}] + f(l'(\vec{x}))_{s}[\vec{a}] \\
&= \sum_{s \subseteq r \subseteq [n]^{\ast}} (-1)^{1 + |r| - |s|} a(f,r)(\vec{x},h(\vec{x}))_{s}[ \vec{a} ] + (-1)^{1 + |n| - |s|}f^{I}\left( h(\vec{x}) \right)_{s}[ \vec{a} ] + a'(f,s)(\vec{x},\vec{a})
\end{align*}
using $r = t \cup s$ in the last line.
\end{proof}

The following theorem shows that equivalence on extensions is the same as the combinatorial equivalence on the associated 2-cocycles.

\begin{theorem}\label{thm:10}
Let $\mathcal V$ be a variety of $R$-modules expanded by multilinear operations $F$ and $(Q,I)$ datum in $\mathcal V$. Two extensions of $Q$ with kernel $I$ in the variety $\mathcal V$ are equivalent if and only they realize equivalent $\mathcal V$-compatible 2-cocycles.   
\end{theorem}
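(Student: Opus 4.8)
The plan is to prove both implications by reducing to the split-type extensions $I\rtimes_T Q$ and by using the relationship between a change of lifting and a 2-coboundary, which is already recorded in Theorem~\ref{thm:multirep} and Lemma~\ref{lem:2coboundmulti}.

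For the forward implication, suppose $0\to I\to A\stackrel{\pi}{\to}Q\to 0$ and $0\to I\to M\stackrel{\pi'}{\to}Q\to 0$ are equivalent, via an isomorphism $(\id_I,\lambda,\id_Q)$ with $\lambda\colon A\to M$, so that $\lambda\circ i_A=i_M$ and $\pi'\circ\lambda=\pi$. Fix a lifting $l\colon Q\to A$ of $\pi$ and let $T$ be the 2-cocycle it determines through $(R1)$--$(R4)$; since $A\in\mathcal V$, Theorem~\ref{thm:multirep} (together with Lemma~\ref{lem:semident}) shows $T$ is $\mathcal V$-compatible and is realized by $A$. From $\pi'\circ(\lambda\circ l)=\pi\circ l=\id_Q$ we see that $\lambda\circ l\colon Q\to M$ is a lifting of $\pi'$; let $T'$ be the 2-cocycle it determines. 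Because $\lambda$ is a homomorphism carrying the realization data of $l$ to that of $\lambda\circ l$ and restricts to $\id_I$ on the kernel, a clause-by-clause inspection of $(R1)$--$(R4)$ gives $T'=T$. Hence both extensions realize the common $\mathcal V$-compatible 2-cocycle $T$; combined with Lemma~\ref{lem:2coboundmulti} (any two 2-cocycles realized by the same extension are $\sim$-equivalent), this shows that they realize $\sim$-equivalent $\mathcal V$-compatible 2-cocycles.

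For the converse, suppose $A$ realizes a $\mathcal V$-compatible 2-cocycle $T$, $M$ realizes a $\mathcal V$-compatible 2-cocycle $T'$, and $T\sim T'$. By the proof of Theorem~\ref{thm:multirep}, the map $a\mapsto\langle a-l\pi(a),\pi(a)\rangle$ is an isomorphism of extensions that fixes $I$ and commutes with the projection, so $A\approx I\rtimes_T Q$ and $M\approx I\rtimes_{T'}Q$ as objects of $\mathrm{EXT}(Q,I)$; by transitivity of equivalence it suffices to produce an equivalence $I\rtimes_T Q\to I\rtimes_{T'}Q$. Since $T-T'\in B^2(Q,I)$, fix $h\colon Q\to I$ with $h(0)=0$ witnessing this 2-coboundary (the relevant action witnessing it being that of $T$, as in Lemma~\ref{lem:2coboundmulti}), and define $\lambda\colon I\rtimes_T Q\to I\rtimes_{T'}Q$ by $\lambda\langle a,x\rangle:=\langle a+h(x),x\rangle$. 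This is a bijection with inverse $\langle b,y\rangle\mapsto\langle b-h(y),y\rangle$, it satisfies $\lambda\langle a,0\rangle=\langle a,0\rangle$ since $h(0)=0$, and it commutes with the second projection; so once $\lambda$ is shown to be a homomorphism, $(\id_I,\lambda,\id_Q)$ is the desired equivalence.

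That $\lambda$ preserves $+$ and the scalar maps $r\cdot(-)$ reduces at once to identities $(B1)$ and $(B2)$. The substantive point is preservation of each multilinear operation $f\in F$ with $n=\ar f$: one equates the first coordinates of the image under $\lambda$ of $F_f(\langle a_1,x_1\rangle,\dots,\langle a_n,x_n\rangle)$ computed in $I\rtimes_T Q$ and of $F_f(\lambda\langle a_1,x_1\rangle,\dots,\lambda\langle a_n,x_n\rangle)$ computed in $I\rtimes_{T'}Q$, substitutes $T'_f=T_f-(T-T')_f$ and the corresponding relation between the action terms of $T$ and $T'$ given by $(B4)$, and expands $f^I(\vec a+h(\vec x))$ and each $a(f,s)(\vec x,\vec a+h(\vec x))$ by multilinearity; the two sides then coincide because of the alternating M\"obius-type coefficients built into $(B3)$ and $(B4)$, which is exactly the bookkeeping that produced those identities in the proof of Lemma~\ref{lem:2coboundmulti}. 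This multilinear identity is the main obstacle: it is elementary but delicate, requiring careful use of the signs $(-1)^{1+|s|}$, the substitution notation $\vec q_s[\vec p]$, and the fact that the action terms $a(f,s)(\vec x,-)$ are multilinear only on the $s$-coordinates. Finally, $\mathcal V$-compatibility of $T$ together with Lemma~\ref{lem:semident} gives $I\rtimes_T Q\in\mathcal V$, so the equivalence just constructed is a morphism in $\mathrm{EXT}\,\mathcal V$, completing the proof.
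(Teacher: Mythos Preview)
Your proposal is correct and follows essentially the same route as the paper: in the forward direction you transport a lifting along the equivalence and observe that applying the isomorphism to $(R1)$--$(R4)$ leaves the 2-cocycle unchanged (then invoke Lemma~\ref{lem:2coboundmulti}); in the converse you pass to the canonical models $I\rtimes_T Q$ and $I\rtimes_{T'}Q$ via Theorem~\ref{thm:multirep} and write down the shift map $\langle a,x\rangle\mapsto\langle a\pm h(x),x\rangle$. The only cosmetic difference is your sign convention (you let $h$ witness $T-T'$ and add, while the paper lets $h$ witness $T'-T$ and subtracts), and the paper actually carries out the M\"obius-type inclusion--exclusion cancellation for the multilinear operations that you correctly identify but only sketch.
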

\begin{proof}
Let $\pi: M \rightarrow Q$ and $\pi': M' \rightarrow Q$ be extensions of $Q$ by $I$. Assume $\pi$ and $\pi'$ are equivalent. Let $\gamma: M \rightarrow M'$ be an isomorphism such that $\gamma|_{I} = \id_{I}$ and $\pi' \circ \gamma = \pi$. Let $l:Q \rightarrow M$ be a lifting of $\pi$ and $T$ be the $\mathcal V$-compatible 2-cocycle defined by $l$ according to (R1)-(R4). Then $\gamma \circ l: Q \rightarrow M'$ is a lifting of $\pi'$. Let $S$ be the 2-cocycle defined by the lifting $\gamma \circ l$. If we apply $\gamma$ to (R1)-(R4) defining $T$, then $\gamma|_{I} = \id_{I}$ implies that $T=S$; thus, $T$ is a 2-cocycle realized by the extension $\pi'$. So if $l': Q \rightarrow M'$ is another lifting which defines a 2-cocycle $T'$ realized by $\pi'$, Lemma~\ref{lem:2coboundmulti} yields $T \sim T'$.

Conversely, assume $T$ is a 2-cocycle realized by $\pi$, $T'$ is a 2-cocycle realized by $\pi'$ and $T \sim T'$. Then there is a map $h: Q \rightarrow I$ which determines the 2-coboundary $T'-T$ by (B1)-(B4). According to Theorem~\ref{thm:multirep}, we have the isomorphisms $\phi_{1}: M \rightarrow I \rtimes_{T} Q$ and $\phi_{2} : M' \rightarrow I \rtimes_{T'} Q$. We then define $\gamma': I \rtimes_{T} Q \rightarrow I \rtimes_{T'} Q$ by $\gamma' \left\langle a,x \right\rangle : = \left\langle a - h(x), x \right\rangle$ which is clearly bijective. To see that $\gamma'$ is a homomorphism, we calculate for the module operations
\begin{align*}
\gamma'\left\langle a,x \right\rangle + \gamma'\left\langle b,y \right\rangle = \left\langle a + b - h(x) - h(y) + T'_{+}(x,y) , x + y \right\rangle &= \left\langle a + b + T_{+}(x,y) - h(x+y), x + y \right\rangle\\
&= \gamma' \left\langle a + b + T_{+}(x,y), x+y \right\rangle \\
&= \gamma' \left( \left\langle a,x \right\rangle + \left\langle b,y \right\rangle\right)  
\end{align*}
and 
\begin{align*}
r \cdot \gamma'\left\langle a,x \right\rangle = \left\langle r \cdot a - r \cdot h(x) + T'_{r}(x), r \cdot x \right\rangle &= \left\langle r \cdot a + T_{r}(x) - h(r \cdot x), r \cdot x\right\rangle \\
&= \gamma'\left\langle r \cdot a + T_{r}(x) ,r \cdot x \right\rangle  \\
&= \gamma'\left( r \cdot \left\langle a, x\right\rangle \right).
\end{align*}
Now take $f \in F$ with $n=\ar f$, $\vec{a} \in I^{n}$, $\vec{x} \in Q^{n}$ and calculate
\begin{align*}
F_{f} &\left( \gamma'\left\langle a_1,x_1 \right\rangle, \ldots, \gamma'\left\langle a_n,x_n \right\rangle \right) \\
&= F_{f} \left( \left\langle a_{1} - h(x_{1}),x_{1} \right\rangle,\ldots,\left\langle a_{n} - h(x_{n}),x_{n} \right\rangle  \right) \\
&= \left\langle f^{I} \left(\vec{a} - h(\vec{x}) \right) + \sum_{s \in [n]^{\ast}} a'(f,s)\left(\vec{x}, \vec{a} - h(\vec{x}) \right) + T'_{f}(\vec{x}) , f^{Q}(\vec{x}) \right\rangle  \\
&= \Bigg<  f^{I}(\vec{a}) + \sum_{ \emptyset \neq t \subseteq [n]} (-1)^{|t|} f^{I} \left( \vec{a} \right)_{t}[h(\vec{x})] + \sum_{s \in [n]^{\ast}} \sum_{r \subseteq s} (-1)^{|r|} a'(f,s)\left(\vec{x}, \vec{a} \right)_{r}[h(\vec{x})] + T'_{f}(\vec{x}),  f^{Q}(\vec{x})  \Bigg> \\ \\
&= \Bigg< f^{I}(\vec{a}) + \sum_{u \in [n]^{\ast}} (-1)^{n - |u|} f^{I} \left(h(\vec{x}) \right)_{u}[\vec{a}]  +  (-1)^{n} f^{I} \left( h(\vec{x}) \right)  \\
&\quad + \sum_{u \in [n]^{\ast}} \sum_{u \subseteq v \in [n]^{\ast}} (-1)^{|v| - |u|} a'(f,v)(\vec{x},h(\vec{x}))_{u}[\vec{a}]  + T'_{f}(\vec{x}) ,  f^{Q}(\vec{x})  \Bigg>  \\ 
&=  \left\langle   f^{I}(\vec{a}) + \sum_{s \in [n]^{\ast}}  a(f,s)(\vec{x},\vec{a}) + T_{f}(\vec{x}) - h \left( f^{Q}(\vec{x}) \right),  f^{Q}(\vec{x}) \right\rangle \\
&= \gamma' \left\langle f^{I} \left( \vec{a} \right) + \sum_{s \in [n]^{\ast}} a(f,s)(\vec{x},\vec{a}) + T_{f}(\vec{x}), f^{Q}(\vec{x}) \right\rangle \\
&= \gamma' \left( F_{f}\left( \left\langle a_{1},x_{1} \right\rangle, \ldots, \left\langle a_{n},x_{n} \right\rangle \right) \right).
\end{align*}
It is immediate that $\gamma'|_{I \times 0} = \id_{I \times 0}$ and $p_{2} \circ \gamma' = p_{2}$. The required isomorphism is then $\gamma = \phi_{2}^{-1} \circ \gamma' \circ \phi_{1}$.
\end{proof}

Let $G$ be a 2-coboundary for $(Q,I)$ witnessed by $h: Q \rightarrow I$. It is straightforward to verify that $G$ satisfies properties (T1) - (T4), and so it is a 2-cocycle of $(Q,I)$. Since $G \sim 0$, we see by Theorem~\ref{thm:10} that $I \rtimes_{G} Q$ and $I \times Q$ are equivalent extensions, and so isomorphic. If $I,Q \in \mathcal V$, then $I \rtimes_{G} Q \approx I \times Q \in \mathcal V$ and so by Theorem~\ref{thm:multirep}, $G$ is $\mathcal V$-compatible. This shows $B^{2}(Q,I) \subseteq Z^{2}_{\mathcal V}(Q,I)$ for any variety $\mathcal V$ containing the datum algebras. This also shows that if $T$ is $\mathcal V$-compatible and $T \sim T'$, then $T'$ is also $\mathcal V$-compatible; therefore, the equivalence on 2-cocycles respects the compatibility with equational theories. The set of 2-cocycles equivalent to $T$ is denoted by $[T]$.

\begin{definition}
Let $\mathcal V$ be a variety of $R$-modules expanded by multilinear operations $F$. The $2^{\mathrm{nd}}$-cohomology of the datum $(Q,I)$ in the variety $\mathcal V$ is the set $H^{2}_{\mathcal V}(Q,I)$ of $\mathcal V$-compatible 2-cocycles modulo equivalence. 
\end{definition}

\begin{proposition}\label{prop:1}
Let $\mathcal V$ be a variety of $R$-modules expanded by multilinear operations $F$ and $M \in \mathcal V$. The following are equivalent:
\begin{enumerate}

	\item there is a retraction $r: M \rightarrow M$;
	
	\item there is a surjective homomorphism $\pi: M \rightarrow Q$ and homomorphism $l: Q \rightarrow M$ such that $\pi \circ l = \id_{Q}$; 
	
	\item there is an ideal $I \triangleleft M$ and subalgebra $Q \leq M$ such that $I \cap Q = 0$ and $M = I + Q$;
	
	\item $M \approx I \rtimes_{T^{\ast}} Q$ for a $\mathcal V$-compatible action $Q \ast I$.

\end{enumerate}
\end{proposition}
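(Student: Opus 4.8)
The plan is to prove the cycle of implications $(1)\Rightarrow(2)\Rightarrow(3)\Rightarrow(4)\Rightarrow(1)$, with most of the content being routine and the one place needing care being the extraction of a $\mathcal V$-compatible action in $(3)\Rightarrow(4)$ together with the verification of $(4)\Rightarrow(1)$.

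For $(1)\Rightarrow(2)$: given an idempotent endomorphism $r:M\to M$ (a retraction, meaning $r\circ r=r$), set $Q=r(M)$, let $\pi:M\to Q$ be $r$ with restricted codomain, and let $l:Q\hookrightarrow M$ be the inclusion. Since $r$ fixes $Q$ pointwise, $\pi\circ l=\id_Q$, and $Q=r(M)$ is a subalgebra as the image of a homomorphism. For $(2)\Rightarrow(3)$: put $I=\ker\pi$ (an ideal, since congruences are ideal-determined) and $Q'=l(Q)\leq M$; then $l$ is injective because $\pi\circ l=\id_Q$, so $Q'\approx Q$ is a subalgebra. For $m\in M$ one has $m=(m-l\pi(m))+l\pi(m)$ with $l\pi(m)\in Q'$ and $\pi(m-l\pi(m))=\pi(m)-\pi(m)=0$ so $m-l\pi(m)\in I$; hence $M=I+Q'$. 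If $x\in I\cap Q'$, write $x=l(q)$; then $0=\pi(x)=\pi l(q)=q$, so $x=l(0)=0$, giving $I\cap Q'=0$. For $(3)\Rightarrow(4)$: define $\pi:M\to M/I\approx Q$ (the isomorphism uses $M=I+Q$ and $I\cap Q=0$) and choose as lifting $l:Q\to M$ the inclusion of the subalgebra $Q\leq M$. This $l$ is a \emph{homomorphism}, so the 2-cocycle $T$ it determines via (R1)--(R4) has all factor-sets zero: $T_+(x,y)=l(x)+l(y)-l(x+y)=0$, $T_r(x)=r\cdot l(x)-l(r\cdot x)=0$, and $T_f(\vec x)=f^M(l(\vec x))-l(f^Q(\vec x))=0$ since $l$ is a homomorphism; thus $T=T^{\ast}$ for the action $\ast$ whose operations are $a(f,s)(\vec x,\vec a)=f^M(l(\vec x))_s[\vec a]$. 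By Theorem~\ref{thm:multirep}, $M\approx I\rtimes_{T^{\ast}}Q$, and since $M\in\mathcal V$ the same theorem (or Lemma~\ref{lem:semident}) forces $T^{\ast}$, hence the action $\ast$, to be $\mathcal V$-compatible. Finally $(4)\Rightarrow(1)$: in $M'=I\rtimes_{T^{\ast}}Q$ the map $\rho\langle a,x\rangle:=\langle 0,x\rangle$ is an endomorphism — this is exactly the computation in the lemma showing $I\rtimes_T Q$ realizes $T$ with lifting $l(x)=\langle 0,x\rangle$, specialized to the case of zero factor-sets, where one checks $\rho(F_f(\vec a,\vec x))=\langle f^I(\vec 0)+\sum_s a(f,s)(\vec x,\vec 0)+0,\,f^Q(\vec x)\rangle=\langle 0,f^Q(\vec x)\rangle=F_f(\rho\langle a_1,x_1\rangle,\ldots)$ using property (T4), and similarly for the module operations — and $\rho\circ\rho=\rho$; transporting $\rho$ across the isomorphism $M\approx I\rtimes_{T^{\ast}}Q$ gives a retraction of $M$.

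The main obstacle, such as it is, lies in $(3)\Rightarrow(4)$: one must be careful that choosing the subalgebra $Q$ itself as the lifting is legitimate, i.e.\ that $l:Q\to M$ is genuinely a section of $\pi$ at the level of the abelian-group and all multilinear structure simultaneously, so that every factor-set $T_+,T_r,T_f$ vanishes identically and the resulting $T$ really is of the form $T^{\ast}$. Once that is in hand, $\mathcal V$-compatibility of the action is not a separate argument but is handed to us by Theorem~\ref{thm:multirep} applied to the hypothesis $M\in\mathcal V$. The remaining verifications — that $I$ is an ideal, that $l$ is injective, the direct-sum decomposition in $(3)$, and the idempotency of $\rho$ in $(4)\Rightarrow(1)$ — are all short and follow the same pattern as the computations already carried out in the proofs of Theorem~\ref{thm:multirep} and the realization lemma, so I would only sketch them.
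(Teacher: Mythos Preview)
Your proof is correct and takes essentially the same approach as the paper: the paper declares the equivalence of (1)--(3) standard and proves $(2)\Leftrightarrow(4)$ directly, with the same key observation that a homomorphic section forces all factor-sets $T_+,T_r,T_f$ to vanish so that $T=T^{\ast}$, and conversely that in $I\rtimes_{T^{\ast}}Q$ the map $x\mapsto\langle 0,x\rangle$ is a homomorphic section. Your cycle $(1)\Rightarrow(2)\Rightarrow(3)\Rightarrow(4)\Rightarrow(1)$ simply repackages the same content, with your $(3)\Rightarrow(4)$ being the paper's $(2)\Rightarrow(4)$ applied to the inclusion $Q\hookrightarrow M$, and your $(4)\Rightarrow(1)$ being the paper's $(4)\Rightarrow(2)$ composed with the trivial $(2)\Rightarrow(1)$.
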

\begin{proof}
The equivalence of (1)-(3) is standard. We show $(2)$ and (4) are equivalent. Suppose $\phi: M \rightarrow I \rtimes_{T^{\ast}} Q$ witnesses an isomorphism. Define $l:Q \rightarrow I \times Q$ by $l(x):= \left\langle 0, x \right\rangle$; clearly, $l$ is a right-inverse map for the second-projection $p_{2}: I \rtimes_{T^{\ast}} Q \rightarrow Q$ which is always a homomorphism. Since $T^{\ast}$ is a 2-cocycle for $(Q,I)$ which satisfies properties (T1) -(T4) and all the non-action terms are zero, we see that $l$ is a homomorphism.

Conversely, assume $(2)$ holds. Then we define the 2-cocycle $T$ with the homomorphism $l: Q \rightarrow M$ in the same manner as Theorem~\ref{thm:multirep}. Since $l$ is a homomorphism we see that $T_{+} \equiv T_{r} \equiv T_{f} \equiv 0$ for all $r \in R$ and $f \in F$; thus $T=T^{\ast}$ for the action terms defined by $l$ and $M \approx I \rtimes_{T^{\ast}} Q$.
\end{proof}

An algebra $M$ satisfying condition (2) in Proposition~\ref{prop:1} is called a \emph{semidirect product} of $Q$ by $I$. It is then tempting to consider a general extension of datum $(Q,I)$ as a semidirect product translated by the non-action terms of the 2-cocycle.

At this point, Theorem~\ref{thm:multirep}, Lemma~\ref{lem:2coboundmulti} and Theorem~\ref{thm:10} establish that equivalence classes of extensions of fixed datum $(Q,I)$ are in bijective correspondence with equivalence classes of compatible 2-cocycles appropriate to $(Q,I)$. If we take the additional step to lay out the category of compatible 2-cocyles associated to $\mathcal V$, then this correspondence is just a restriction of an equivalence between categories. The category $\mathcal H^{2} \mathcal V$ has \emph{objects} the compatible 2-cocycles appropriate to datum in $\mathcal V$ and morphisms given in the following manner: if $T$ is a 2-cocycle for $(Q,I)$ and $T'$ is a 2-cocycle for $(P,J)$, then a \emph{morphism} is a triple $(\alpha,h,\beta) : T \rightarrow T'$ where $\alpha \in \Hom(I,J)$, $\beta \in \Hom(Q,P)$ and $h \in \mathrm{Map} (Q,J)$ with $h(0)=0$ such that 
\begin{enumerate}
			
	\item[(E1)] $\alpha \circ T_{+}(x,y) = T'_{+}(\beta(x),\beta(y)) + h(x) + h(y) - h(x+y)$,
			
	\item[(E2)] $\alpha \circ T_{r}(x) = T'_{r}(\beta(x)) + r \cdot h(x) - h(r \cdot x)$,
	
	\item[(E3)] $\alpha \circ a^{1}(f,s)(\vec{x},\vec{a}) = a^{2}(f,s)(\alpha(\vec{x}),\alpha(\vec{a})) + \sum_{s \subsetneq r \subseteq [\ar f]^{\ast}} a^{2}(f,r)(\beta(\vec{x}),h(\vec{x}))_{s}[\alpha(\vec{a})] + f^{K}(h(\vec{x}))_{s}[\alpha(\vec{a})]$.
			
\end{enumerate}
The intended interpretation for the map $h$ is the difference between two sections as in Lemma~\ref{lem:2coboundmulti}. If we have a morphism
\[
\begin{tikzcd}
	0 & {I} & {M} & {Q} & 0 \\
	0 & {J} & {N} & {P} & 0
	\arrow[from=1-1, to=1-2]
	\arrow["{i_{1}}", from=1-2, to=1-3]
	\arrow["\alpha_{1}", from=1-2, to=2-2]
	\arrow["\pi_{1}", from=1-3, to=1-4]
	\arrow["\lambda_{1}", no head, from=1-3, to=2-3]
	\arrow[from=1-4, to=1-5]
	\arrow["\beta_{1}", from=1-4, to=2-4]
	\arrow[from=2-1, to=2-2]
	\arrow["{i_{2}}", from=2-2, to=2-3]
	\arrow["\pi_{2}", from=2-3, to=2-4]
	\arrow[from=2-4, to=2-5]
\end{tikzcd}
\]
in $\mathrm{Ext} \mathcal V$, then let $l_{1} : Q \rightarrow M$ and $l_{2} : P \rightarrow N$ be sections which determine 2-cocycles $T^{1}$ and $T^{2}$, respectively. Then $h_{1}: Q \rightarrow J$ is the unique map such that
\begin{align}
i_{2} \circ h_{1}(x) + l_{2} \circ \beta_{1}(x) = \lambda_{1} \circ l_{1}(x).
\end{align}
If we have another morphism
\[
\begin{tikzcd}
	0 & {J} & {N} & {P} & 0 \\
	0 & {H} & {E} & {A} & 0
	\arrow[from=1-1, to=1-2]
	\arrow["{i_{2}}", from=1-2, to=1-3]
	\arrow["\alpha_{2}", from=1-2, to=2-2]
	\arrow["\pi_{2}", from=1-3, to=1-4]
	\arrow["\lambda_{2}", no head, from=1-3, to=2-3]
	\arrow[from=1-4, to=1-5]
	\arrow["\beta_{2}", from=1-4, to=2-4]
	\arrow[from=2-1, to=2-2]
	\arrow["{i_{3}}", from=2-2, to=2-3]
	\arrow["\pi_{3}", from=2-3, to=2-4]
	\arrow[from=2-4, to=2-5]
\end{tikzcd}
\]
with section $l_{3}:A \rightarrow E$ which defines 2-cocycle $T^{3}$, then there is $h_{2}:P \rightarrow H$ such that
\begin{align}
i_{3} \circ h_{2}(x) + l_{3} \circ \beta_{2}(x) = \lambda_{2} \circ l_{2}(x)
\end{align}
which then yields
\begin{align}
\lambda_{2} \circ \lambda_{1} \circ l_{1}(x) &= \lambda_{2} \circ i_{2} \circ h_{1}(x) + \lambda_{2} \circ l_{2} \circ \beta_{1}(x) \\
&= i_{3} \circ \alpha_{2} \circ h_{1}(x) + i_{3} \circ h_{2} \circ \beta_{1}(x) + l_{3} \circ \beta_{2} \circ \beta_{1}(x) \\
&= i_{3} \left( \alpha_{2} \circ h_{1}(x) + h_{2} \circ \beta_{1}(x) \right) + l_{3} \circ \beta_{2} \circ \beta_{1}(x).
\end{align}
With this interpretation, the composition of morphisms should be given by $(\alpha_{2},h_{2},\beta_{2}) \circ (\alpha_{1},h_{1},\beta_{1}) = (\alpha_{2} \circ \alpha_{1} , k, \beta_{2} \circ \beta_{1})$ where $k(x):= \alpha_{2} \circ h_{1}(x) + h_{2} \circ \beta_{1}(x).$

\begin{theorem}\label{thm:2cohomnonabel}
Let $\mathcal V$ be a variety of $R$-modules expanded by multilinear operations $F$ and $(Q,I)$ datum in $\mathcal V$. The set of equivalence classes of extensions of $Q$ by $I$ in the variety $\mathcal V$ is in bijective correspondence with $H^{2}_{\mathcal V}(Q,I)$. The equivalence class of a semidirect product corresponds to an equivalence class of a 2-cocycle with only action terms
\end{theorem}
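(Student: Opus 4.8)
The plan is to assemble the correspondence out of the results already proved, so that essentially no new computation is needed. I would define a map $\Phi : \mathrm{EXT}(Q,I) \to H^{2}_{\mathcal V}(Q,I)$ by sending an extension $\pi : M \to Q$ with $\ker\pi = I$ to the class $[T]$, where $T$ is the $\mathcal V$-compatible 2-cocycle obtained from any choice of lifting $l : Q \to M$ via $(R1)$--$(R4)$ (the formulas appearing in the proof of Theorem~\ref{thm:multirep}); its $\mathcal V$-compatibility is guaranteed by Theorem~\ref{thm:multirep}. In the other direction I would define $\Psi : H^{2}_{\mathcal V}(Q,I) \to \mathrm{EXT}(Q,I)$ by $\Psi([T]) = [I \rtimes_{T} Q]$, using the embedding $i(a) = \langle a, 0 \rangle$ and the second projection $p_{2}$ as the extension data; that $I \rtimes_{T} Q$ lies in $\mathcal V$ is Theorem~\ref{thm:multirep} again (equivalently Lemma~\ref{lem:semident} together with $Q, I \in \mathcal V$).

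Next I would check that both maps are well defined. For $\Phi$ there are two things to verify: independence of the chosen lifting, which is exactly Lemma~\ref{lem:2coboundmulti} (two liftings of the same $\pi$ give 2-cocycles differing by a 2-coboundary, hence the same class); and independence of the representative $M$ of $[M]$, which is the forward implication of Theorem~\ref{thm:10} (equivalent extensions realize equivalent $\mathcal V$-compatible 2-cocycles). For $\Psi$ I would use the converse implication of Theorem~\ref{thm:10}: if $T \sim T'$ are both $\mathcal V$-compatible, then $I \rtimes_{T} Q$ and $I \rtimes_{T'} Q$ are equivalent extensions, so $[I \rtimes_{T} Q]$ depends only on $[T]$.

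Then I would show $\Phi$ and $\Psi$ are mutually inverse. For $\Psi \circ \Phi = \mathrm{id}$: starting from $[M]$, the isomorphism $\psi : M \to I \rtimes_{T} Q$ constructed in the proof of Theorem~\ref{thm:multirep} restricts to $\mathrm{id}_{I}$ and intertwines the projections, so it is an equivalence of extensions and $[I \rtimes_{T} Q] = [M]$. For $\Phi \circ \Psi = \mathrm{id}$: the lemma preceding Theorem~\ref{thm:multirep} says that $I \rtimes_{T} Q$ realizes $T$ through the lifting $x \mapsto \langle 0, x \rangle$, so this particular lifting returns $T$ itself, giving $\Phi([I \rtimes_{T} Q]) = [T]$. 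This yields the bijection. For the last sentence, I would invoke Proposition~\ref{prop:1}: $M$ is a semidirect product of $Q$ by $I$ iff $M \approx I \rtimes_{T^{\ast}} Q$ for a $\mathcal V$-compatible action $Q \ast I$; since $T^{\ast}$ has all factor-sets zero and is realized by $I \rtimes_{T^{\ast}} Q$ through the canonical lifting, $\Phi([M])$ is the class of a 2-cocycle with only action terms, and conversely if $[T] = \Phi([M])$ contains such a representative $T^{\ast}$ then $M$ is equivalent to $I \rtimes_{T^{\ast}} Q = I \rtimes_{\ast} Q$, which is a semidirect product by condition (4) of Proposition~\ref{prop:1}.

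The analytic content is already spent in Lemma~\ref{lem:semident}, Theorem~\ref{thm:multirep}, Lemma~\ref{lem:2coboundmulti}, Theorem~\ref{thm:10}, and Proposition~\ref{prop:1}, so I expect no serious obstacle — the proof is largely a bookkeeping assembly. The one point needing a little care is verifying that $\Phi$ and $\Psi$ are genuine two-sided inverses rather than merely both being surjective: this hinges on the observation that $I \rtimes_{T} Q$ carries the canonical lifting $x \mapsto \langle 0, x \rangle$ which reproduces $T$ exactly, pinning down $\Phi \circ \Psi = \mathrm{id}$.
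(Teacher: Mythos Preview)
Your proposal is correct and matches the paper's own treatment: the paper does not give a separate proof of this theorem but states, immediately before it, that Theorem~\ref{thm:multirep}, Lemma~\ref{lem:2coboundmulti} and Theorem~\ref{thm:10} establish the bijection, with Proposition~\ref{prop:1} handling the semidirect-product clause. Your assembly of $\Phi$ and $\Psi$ and the verification that they are mutual inverses is exactly the bookkeeping the paper leaves implicit.
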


We observe how the $2^{\mathrm{nd}}$-cohomologies are related through the parameter of the variety (or equational theory). For fixed datum $(Q,I)$, the set of $2^{\mathrm{nd}}$-cohomologies $\{  H^{2}_{\mathcal V}(Q,I) : \mathcal V \leq  _{R} \mathcal M_{F} \}$ forms an ordered set under inclusion denoted by $\mathcal H^{2}(Q,I)$.

\begin{proposition}
Let $(Q,I)$ be datum in the signature appropriate for multilinear $R$-module expansions. The class of varieties in the same signature which contain the datum forms a complete lattice $\mathcal L(Q,I)$. The maps $\Psi(\mathcal V) := H^{2}_{\mathcal V}(Q,I)$ and $\Theta (E) := \mathcal V \left( \{ I \rtimes_{T} Q : [T] \in E \} \right)$ define a Galois connection $(\Theta,\Psi)$ between the ordered sets $\mathcal H^{2}(Q,I)$ and $\mathcal L(Q,I)$.
\end{proposition}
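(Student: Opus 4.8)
The plan is to treat the two assertions separately, handling the lattice claim by routine universal-algebra facts and then reducing the Galois connection entirely to Lemma~\ref{lem:semident} and Theorem~\ref{thm:multirep}. For the lattice part I would observe that an arbitrary intersection of varieties in the fixed signature is again a variety, namely $\bigcap_i \mathcal{V}_i = \Mod\bigl(\bigcup_i \Id\mathcal{V}_i\bigr)$, and that the intersection of varieties each containing $Q$ and $I$ still contains $Q$ and $I$; since ${}_{R}\mathcal{M}_{F}$ is the largest variety in the signature and contains the datum, the subvarieties containing $(Q,I)$ are closed under arbitrary meets and have a top element, hence form a complete lattice $\mathcal{L}(Q,I)$, with joins computed as the meet of all common upper bounds. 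I would also record at the outset that $\mathcal{H}^{2}(Q,I)$ is a poset under $\subseteq$ and that each $H^{2}_{\mathcal{W}}(Q,I)$ with $\mathcal{W}\in\mathcal{L}(Q,I)$ is nonempty: by the remarks following Theorem~\ref{thm:10} one has $B^{2}(Q,I)\subseteq Z^{2}_{\mathcal{W}}(Q,I)$, so every such cohomology contains the class $[0]$ of the null 2-cocycle, and $I\rtimes_{0} Q\cong I\times Q$.

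Next I would check that $\Theta$ and $\Psi$ are well defined and monotone. For $\Theta(E)$: since $[0]\in E$ the algebra $I\times Q$ is among its generators, $I$ occurs in it as an ideal and $Q$ as a quotient, so $\Theta(E)\in\mathcal{L}(Q,I)$; and $E_{1}\subseteq E_{2}$ plainly yields $\Theta(E_{1})\subseteq\Theta(E_{2})$ because a larger generating class generates a larger variety. For $\Psi$: $\mathcal{V}_{1}\subseteq\mathcal{V}_{2}$ gives $\Id\mathcal{V}_{2}\subseteq\Id\mathcal{V}_{1}$, so $\mathcal{V}_{1}$-compatibility of a 2-cocycle is the stronger requirement, whence $Z^{2}_{\mathcal{V}_{1}}(Q,I)\subseteq Z^{2}_{\mathcal{V}_{2}}(Q,I)$; since the equivalence $T\sim T'$ is defined combinatorially, independently of the variety, this descends to $H^{2}_{\mathcal{V}_{1}}(Q,I)\subseteq H^{2}_{\mathcal{V}_{2}}(Q,I)$, i.e. $\Psi$ is monotone.

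The core of the argument is the adjunction equivalence $\Theta(E)\subseteq\mathcal{V}\iff E\subseteq\Psi(\mathcal{V})$, which I would prove directly. For the backward direction, if $E\subseteq H^{2}_{\mathcal{V}}(Q,I)$ then every $[T]\in E$ has a $\mathcal{V}$-compatible representative, so Theorem~\ref{thm:multirep} gives $I\rtimes_{T} Q\in\mathcal{V}$; hence all generators of $\Theta(E)$ lie in $\mathcal{V}$ and $\Theta(E)\subseteq\mathcal{V}$. For the forward direction, if $\Theta(E)\subseteq\mathcal{V}$ then $I\rtimes_{T} Q\in\mathcal{V}$ for each $[T]\in E$; since $\mathcal{V}$ contains the datum, Lemma~\ref{lem:semident} forces $T$ to be $\mathcal{V}$-compatible, so $[T]\in H^{2}_{\mathcal{V}}(Q,I)$ and $E\subseteq\Psi(\mathcal{V})$. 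Specializing the biconditional to $\mathcal{V}=\Theta(E)$ and to $E=\Psi(\mathcal{V})$ yields the unit and counit inequalities $E\subseteq\Psi\Theta(E)$ and $\Theta\Psi(\mathcal{V})\subseteq\mathcal{V}$, which together with monotonicity is precisely the statement that $(\Theta,\Psi)$ is a Galois connection between $\mathcal{H}^{2}(Q,I)$ and $\mathcal{L}(Q,I)$.

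I do not expect a genuine obstacle, since all the real content has already been packed into Lemma~\ref{lem:semident} and Theorem~\ref{thm:multirep}; the points that need care are bookkeeping ones — confirming that the cocycle equivalence relation and the inclusion $B^{2}(Q,I)\subseteq Z^{2}_{\mathcal{V}}(Q,I)$ are uniform in $\mathcal{V}$, so that the classes $[T]$ may legitimately be compared across the various $H^{2}_{\mathcal{V}}(Q,I)$, and noting that the connection here is \emph{covariant}, both $\Theta$ and $\Psi$ preserving the order, rather than order-reversing as the terminology sometimes suggests.
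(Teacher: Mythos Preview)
Your proposal is correct and in fact cleaner than the paper's own argument. Both proofs rest on the same two results --- Lemma~\ref{lem:semident} and Theorem~\ref{thm:multirep} --- to translate between ``$T$ is $\mathcal V$-compatible'' and ``$I\rtimes_{T}Q\in\mathcal V$,'' but they package this differently. The paper argues by first showing that $\Psi$ preserves binary meets (from monotonicity plus the observation that a 2-cocycle compatible with both $\mathcal U_{1}$ and $\mathcal U_{2}$ yields $I\rtimes_{T}Q\in\Mod(\Id\mathcal U_{1}\cup\Id\mathcal U_{2})=\mathcal U_{1}\wedge\mathcal U_{2}$), and then asserts one of the unit/counit inequalities for $\Theta\circ\Psi$ to conclude $\Theta$ is the lower adjoint. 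You instead verify the adjunction biconditional $\Theta(E)\subseteq\mathcal V\iff E\subseteq\Psi(\mathcal V)$ directly, which is both shorter and self-contained: it avoids any appeal to meet-preservation implying adjointness (a step that would require knowing $\mathcal H^{2}(Q,I)$ is a complete lattice, which the paper never establishes) and it sidesteps the sign of the counit inequality, which the paper appears to state with the wrong direction. Your additional bookkeeping --- that $\Theta(E)$ really lands in $\mathcal L(Q,I)$ because $I\times Q$ is always among the generators, and that the equivalence relation on cocycles is uniform in $\mathcal V$ so classes can be compared across cohomologies --- is exactly what is needed to make the argument airtight and is not spelled out in the paper.
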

\begin{proof}
It is easy to see that $\Psi$ is monotone on varieties and so yields the inequality $\Psi ( \mathcal U_{1} \wedge \mathcal U_{1} ) \leq \Psi (\mathcal U_{1}) \wedge \Psi(\mathcal U_{2})$. Now take $[T] \in H^{2}_{\mathcal U_{1}}(Q,I) \cap H^{2}_{\mathcal U_{2}}(Q,I)$; thus, $T$ is both $\mathcal U_{1}$ and $\mathcal U_{2}$-compatible. Then from Theorem~\ref{thm:multirep} and the class operator we see that $I \rtimes_{T} Q \in \Mod \left( \Id \mathcal U_{1} \cup \Id \mathcal U_{2} \right) = \mathcal U_{1} \wedge \mathcal U_{2}$. This implies $[T] \in H^{2}_{\Psi (\mathcal U_{1}) \wedge \Psi(\mathcal U_{2})}(Q,I)$ and so $\Psi$ preserves the meet.

From the definitions of the maps, it is not difficult to see that $\Theta \circ \Psi \geq \id_{\mathcal L(Q,I)}$ and so $\Theta$ is the lower-adjoint for $\Psi$.
\end{proof}

We can restrict the notion of equivalent extensions to the automorphisms of a fixed extension. An automorphism $\gamma \in \Aut M$ \emph{stabilizes} the extension $\pi: M \rightarrow Q$ where $\ker \pi = I$ if $\gamma|_{I} = \id_{I}$ and $\pi \circ \gamma = \pi$. The set of stabilizing automorphisms of the extension $\pi$ is denoted by $\mathrm{Stab}(\pi)$ and is a subgroup of $\Aut M$. Since the second-projection is the canonical map for extensions $I \rtimes_{T} Q$, we may also write $\mathrm{Stab}( I \rtimes_{T} Q )$ to denote its group of stabilizing automorphisms.

As we saw in the definition of 2-coboundaries, we may sometimes need to invoke an action as part of the definitions which leads to situations where an action is part of the notion of datum. This will become more apparent when we discuss extensions with abelian kernels and the set of extensions realizing particular fixed action terms. For algebras $Q$ and $I$ with an available action $Q \ast I$, the triple $(Q,I,\ast)$ may also be referred to as datum.

\begin{definition}
Let $\mathcal V$ be a variety of $R$-modules expanded by multilinear operations $F$, $Q, I \in \mathcal V$ and $Q \ast I$ an action. A 1-cocycle or \emph{derivation} of the datum $(Q,I,\ast)$ is a map $h: Q \rightarrow I$ which determines the null 2-coboundary. 
\end{definition}

Let $\mathrm{Der}(Q,I,\ast)$ denote the set of derivations of the datum $(Q,I,\ast)$. It is possible to see directly from (B1)-(B4) that the set of derivations is an abelian group under the addition induced by $I$ in the codomain; however, this can more easily seen by making the following connection between derivations and stabilizing automorphisms.

\begin{theorem}\label{thm:stabderiv}
Let $\mathcal V$ be a variety of $R$-modules expanded by multilinear operations $F$ and $(Q,I)$ datum in $\mathcal V$. The set of 1-cocycles is an abelian group and for any extension $\pi : M \rightarrow Q$ with $\ker \pi = I$, $\mathrm{Stab}(\pi) \approx \mathrm{Der}(Q,I,\ast)$ where the $Q \ast I$ is the associated action of the extension; in particular, $\mathrm{Stab}(\pi)$ is an abelian group.
\end{theorem}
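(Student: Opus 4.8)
The plan is to exhibit a bijection $\mathrm{Stab}(\pi) \to \mathrm{Der}(Q,I,\ast)$ and check it transports the group operation of $\mathrm{Stab}(\pi)$ to the pointwise addition on maps $Q \to I$, which automatically yields that both are abelian. First I would fix a lifting $l : Q \to M$ of $\pi$, so every $m \in M$ is uniquely $m = i(a) + l(\pi(m))$ with $a \in I$; by Theorem~\ref{thm:multirep} we may as well work inside $M = I \rtimes_T Q$ with $l(x) = \langle 0, x\rangle$, where $T$ is the $\mathcal V$-compatible 2-cocycle defined by $l$ and $\ast$ is its action. Given $\gamma \in \mathrm{Stab}(\pi)$, since $\pi \circ \gamma = \pi$ and $\gamma|_I = \id_I$, for each $x \in Q$ we have $\gamma\langle 0, x\rangle = \langle h_\gamma(x), x\rangle$ for a unique $h_\gamma(x) \in I$, and $\gamma\langle a, x\rangle = \langle a + h_\gamma(x), x\rangle$; also $\gamma$ fixing $I$ forces $h_\gamma(0) = 0$. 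Conversely any map $h : Q \to I$ with $h(0)=0$ defines a bijection $\gamma_h\langle a,x\rangle := \langle a + h(x), x\rangle$ fixing $I$ pointwise and commuting with $p_2$. So the correspondence $\gamma \leftrightarrow h_\gamma$ is already a bijection between $\mathrm{Stab}(\pi)$ and $\{h : Q \to I : h(0)=0\}$ \emph{once we restrict to those $\gamma$ (equivalently $h$) for which $\gamma_h$ is a homomorphism}.

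The crux is therefore: $\gamma_h$ is an (endo)morphism of $I \rtimes_T Q$ if and only if $h$ is a derivation of $(Q,I,\ast)$, i.e. $h$ determines the null 2-coboundary in the sense of (B1)--(B4). This I would verify by comparing, operation by operation, the computation of $\gamma_h$ applied to a $T$-twisted operation against the same operation applied to the $\gamma_h$-images. For the module operations: $\gamma_h(\langle a,x\rangle + \langle b,y\rangle) = \langle a+b+T_+(x,y)+h(x+y), x+y\rangle$ while $\gamma_h\langle a,x\rangle + \gamma_h\langle b,y\rangle = \langle a+b+h(x)+h(y)+T_+(x,y), x+y\rangle$, so homomorphism $\iff$ $T_+(x,y)+h(x+y) = h(x)+h(y)+T_+(x,y)$ $\iff$ $h(x)+h(y)-h(x+y)=0$, which is precisely (B1) for the null coboundary; similarly the scalar operations give (B2). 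For $f \in F$ with $n = \ar f$, applying $\gamma_h$ to $F_f(\langle a_1,x_1\rangle,\dots)$ gives $\langle f^I(\vec a) + \sum_{s} a(f,s)(\vec x,\vec a) + T_f(\vec x) + h(f^Q(\vec x)),\ f^Q(\vec x)\rangle$, whereas $F_f(\gamma_h\langle a_1,x_1\rangle,\dots) = F_f(\langle a_1 + h(x_1), x_1\rangle,\dots)$ expands, by multilinearity of $f^I$ and of each action term $a(f,s)(\vec x,\cdot)$, into $\langle f^I(\vec a) + \sum_{\emptyset \neq t \subseteq [n]} f^I(\vec a)_t[h(\vec x)] + \sum_{s}\sum_{r \subseteq s} a(f,s)(\vec x,\vec a)_r[h(\vec x)] + T_f(\vec x),\ f^Q(\vec x)\rangle$. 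Re-indexing the double sum by $u = $ the $h$-coordinates and collecting the terms with $u \neq \emptyset$, $u = [n]$, and $u \in [n]^\ast$ (exactly the re-indexing already carried out in the proof of Lemma~\ref{lem:2coboundmulti} and Theorem~\ref{thm:10}), the homomorphism condition reduces to $h(f^Q(\vec x)) = \sum_{s \in [n]^\ast}(-1)^{1+|s|} a(f,s)(\vec x, h(\vec x)) + (-1)^{1+n} f^I(h(\vec x))$, which is exactly (B3) for the null 2-coboundary, and the analogous identity on the action-term slots (reading off the mixed coordinates) is (B4). So $\gamma_h$ is a homomorphism precisely when $h \in \mathrm{Der}(Q,I,\ast)$, and since $\gamma_h$ is already a bijection, it is then an automorphism lying in $\mathrm{Stab}(\pi)$.

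Finally the group structure: for $h,h' \in \mathrm{Der}(Q,I,\ast)$ one computes directly $\gamma_h \circ \gamma_{h'}\langle a,x\rangle = \gamma_h\langle a + h'(x), x\rangle = \langle a + h'(x) + h(x), x\rangle = \gamma_{h+h'}\langle a,x\rangle$, so $h \mapsto \gamma_h$ is an isomorphism from $(\mathrm{Der}(Q,I,\ast), +)$ onto $\mathrm{Stab}(\pi)$ — in particular $\mathrm{Der}(Q,I,\ast)$, with the pointwise addition inherited from $I$, is a group (closed under $+$ and negation, since the computation above also shows $\gamma_h^{-1} = \gamma_{-h}$ so $-h$ is again a derivation), and it is abelian because the codomain $I$ is. Transporting back along the isomorphism $M \approx I \rtimes_T Q$ of Theorem~\ref{thm:multirep} identifies $\mathrm{Stab}(\pi)$ for the original extension with $\mathrm{Stab}(I \rtimes_T Q)$, completing the proof. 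The main obstacle is purely bookkeeping: organizing the multilinear expansion of $F_f(\langle a_1+h(x_1),x_1\rangle,\dots)$ and the binomial cancellations $\sum_{\emptyset \neq r \subseteq s}(-1)^{|s|-|r|} = (-1)^{1+|s|}$ so that the homomorphism condition lands exactly on identities (B3)--(B4) with all factor-sets zero; but this is essentially the same computation already performed in Lemma~\ref{lem:2coboundmulti}, specialized to $l' = \gamma_h \circ l$ and $T' = T$, so it can be cited rather than redone in full.
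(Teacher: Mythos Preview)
Your proposal is correct and follows essentially the same approach as the paper: both reduce to $I\rtimes_T Q$, identify stabilizing automorphisms with maps of the form $\langle a,x\rangle\mapsto\langle a\pm d(x),x\rangle$ by invoking the computation behind Lemma~\ref{lem:2coboundmulti}/Theorem~\ref{thm:10} specialized to $T=T'$, and then read off that composition corresponds to addition of the $d$'s. One caveat: you use the convention $\gamma_h\langle a,x\rangle=\langle a+h(x),x\rangle$ while the paper uses $\langle a-d(x),x\rangle$, so the raw homomorphism condition you obtain (expanding $\vec a+h(\vec x)$) has no alternating signs, whereas the displayed identity you quote is literally (B3) as written with the alternating $(-1)^{1+|s|}$ coming from the paper's minus-sign expansion; the two conditions are equivalent via $h\leftrightarrow -h$, but be careful that the formula you wrote is (B3) for $-h$, not for $h$, under your convention.
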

\begin{proof}
Note stabilizing automorphisms are just the restriction to a fixed extension of the isomorphisms which witness equivalence and derivations determine the 2-coboundaries which witness reflexivity $T \sim T$. Then the proofs of Theorem~\ref{thm:10} and Lemma~\ref{lem:2coboundmulti} show that $\gamma: I \rtimes_{T} Q \rightarrow I \rtimes_{T} Q$ is a stabilizing automorphism if and only if it is of the form $\gamma(a,x) = \left\langle a - d(x), x \right\rangle$ for a derivation $d: Q \rightarrow I$.

We define a map $\Psi: \mathrm{Stab}(\pi) \approx \mathrm{Der}(Q,I,\ast)$ by $\Psi(\gamma) := d_{\gamma}$ where $\gamma(a,x) = \left\langle a - d_{\gamma}(x), x \right\rangle$. It follows that $\Psi$ is bijective by directly verifying $d_{\gamma_{d}} = d$ and $\gamma_{d_{\gamma}} = \gamma$ from the definitions. For two stabilizing automorphisms $\gamma$ and $\sigma$, the evaluation $\gamma \circ \sigma \left\langle a,x \right\rangle = \gamma \left\langle a - d_{\sigma}(x), x \right\rangle = \left\langle a - d_{\sigma}(x) - d_{\gamma}(x), x \right\rangle = \sigma \circ \gamma \left\langle a,x \right\rangle$ implies $\Psi(\gamma \circ \sigma) = \Psi(\gamma) + \Psi(\sigma)=\Psi(\sigma \circ \gamma)$ by bijectivity of $\Psi$; therefore, $\Psi$ is an isomorphism.
\end{proof}

We essentially follow \cite{wiresI} in the definition of principal derivations and $1^{\mathrm{st}}$-cohomology but with a modification to accommodate for the fact that congruences are determine by ideals. The motivation come from the fact that under the analogous isomorphism of Theorem~\ref{thm:stabderiv} for groups, principal derivations correspond to the stabilizing automorphisms which are inner automorphisms of the semidirect product of the datum.

Given an ideal $I \triangleleft M$, polynomials $p$ and $q$ are $I$-twins if there is a term $t(x,\vec{y})$ and $\vec{c},\vec{d} \in I^{k}$ such that $p(x)=t(x,\vec{c})$ and $q(x)=t(x,\vec{d})$. The set of $I$-twins of the identity is denoted by $\mathrm{Tw}_{I} \, M$; that is, $p \in \mathrm{Tw}_{I} \, M$ if there is a term $t(x,\vec{y})$ and $\vec{c},\vec{d} \in I^{k}$ such that $p(x)=t(x,\vec{c})$ and $x=t(x,\vec{d})$. Note $\mathrm{Tw}_{I} \, M$ is closed under composition. If $p,q \in \mathrm{Tw}_{I} \, M$, then there are terms $t(x, \vec{y}), s(x,\vec{z})$ and tuples $\vec{c},\vec{d} \in I^{k}$, $\vec{a}, \vec{b}\in I^{m}$ such that $p(x) = t(x,\vec{c}), x=t(x,\vec{d})$ and $q(x)=s(x,\vec{a}), x = s(x,\vec{b})$. Then the term $r(x,\vec{y},\vec{z}) := t(s(x,\vec{z}),\vec{y})$ with the tuples $(\vec{c},\vec{a})$ and $(\vec{d},\vec{b})$ shows $p \circ q \in \mathrm{Tw}_{I} \, M$. We then consider the subset $\mathrm{Tw}_{I,F} \, M = \{ p \in \mathrm{Tw}_{I} \, M : \exists a \in M, p(a)=a \}$ of $I$-twins of the identity which have a fixed point. The \emph{principal stabilizing automorphisms} of an extension $\pi: M \rightarrow Q$ realizing datum $(Q,I)$ are defined as
\[
\mathrm{PStab}(\pi) := \mathrm{Tw}_{I,F} \, M \cap \mathrm{Stab}(\pi).
\]
In general, $\mathrm{Tw}_{I,F} \, M$ is not closed under composition, but $\mathrm{Tw}_{I,F} \, M \cap \mathrm{Stab}(\pi)$ will be because for the algebras under consideration stabilizing automorphisms restrict to the identity on the ideal of the extension.

\begin{lemma}
Let $\mathcal V$ be a variety of $R$-modules expanded by multilinear operations, $Q,I \in \mathcal V$ and $Q \ast I$ an action. Then $\mathrm{PStab}(I \rtimes_{T^{\ast}} Q ) \leq \mathrm{Stab}(I \rtimes_{T^{\ast}} Q )$.
\end{lemma}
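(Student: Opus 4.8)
The plan is to show that $\mathrm{PStab}(I \rtimes_{T^{\ast}} Q)$, which by definition is $\mathrm{Tw}_{I,F}\,(I \rtimes_{T^{\ast}} Q) \cap \mathrm{Stab}(I \rtimes_{T^{\ast}} Q)$, is a subgroup of the group $\mathrm{Stab}(I \rtimes_{T^{\ast}} Q)$. Since $\mathrm{Stab}(I \rtimes_{T^{\ast}} Q)$ is already known to be a group (indeed an abelian group, by Theorem~\ref{thm:stabderiv}), it suffices to check that $\mathrm{PStab}$ is closed under composition and under inverses, and that it contains the identity. The identity map is clearly an $I$-twin of the identity with a fixed point (take the term $t(x) = x$), so $\id \in \mathrm{PStab}$. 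Closure under composition is essentially the remark made just before the lemma: $\mathrm{Tw}_{I}\,M$ is closed under composition in general (the composite-term argument given in the text), and although $\mathrm{Tw}_{I,F}\,M$ need not be closed under composition by itself, the intersection with $\mathrm{Stab}(\pi)$ is, because a stabilizing automorphism restricts to $\id_I$ on the ideal and hence preserves the relevant fixed-point data — I would spell this out by noting that if $\gamma,\sigma \in \mathrm{PStab}$ then $\gamma\circ\sigma \in \mathrm{Stab}$, it lies in $\mathrm{Tw}_I\,M$ by the composition-closure of $\mathrm{Tw}_I\,M$, and it has a fixed point since any point of $I\times 0$ (equivalently, $i(I)$) is fixed by both $\gamma$ and $\sigma$.

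The only genuinely new point is closure under inverses. Here I would use the explicit description of stabilizing automorphisms from the proof of Theorem~\ref{thm:stabderiv}: every $\gamma \in \mathrm{Stab}(I \rtimes_{T^{\ast}} Q)$ has the form $\gamma\langle a,x\rangle = \langle a - d(x), x\rangle$ for a derivation $d \in \mathrm{Der}(Q,I,\ast)$, and its inverse is $\gamma^{-1}\langle a,x\rangle = \langle a + d(x), x\rangle$, which corresponds to the derivation $-d$. So I must check that if $\gamma$ arises from an $I$-twin of the identity with a fixed point, then so does $\gamma^{-1}$. The fixed-point condition transfers immediately: $\gamma(a) = a \iff \gamma^{-1}(a) = a$, so the same witnessing element works. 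For the $I$-twin condition, I would argue that if $\gamma(x) = t(x,\vec c)$ and $x = t(x,\vec d)$ with $\vec c,\vec d \in (I\times 0)^k$, then because $\gamma$ is bijective with inverse again of the restricted form above, one can produce a term expressing $\gamma^{-1}$ as an $I$-twin of the identity — most cleanly by exhibiting $\gamma^{-1}$ directly: since $\gamma^{-1}\langle a,x\rangle = \langle a+d(x),x\rangle$ and $\gamma\langle a,x\rangle = \langle a-d(x),x\rangle$, and $d$ is built from the datum via the coboundary formulas, the same kind of term-with-$I$-parameters that presents $\gamma$ can be reassembled to present $\gamma^{-1}$; alternatively, observe that the group $\mathrm{Stab}$ is abelian, so $\gamma^{-1}$ can be realized by iterating or by a symmetry argument on the parameter tuples $\vec c \leftrightarrow \vec d$.

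I expect the main obstacle to be the closure under inverses of the $I$-twin condition — specifically, producing an explicit term witnessing that $\gamma^{-1}$ is an $I$-twin of the identity. The cleanest route is to avoid manipulating arbitrary witnessing terms and instead leverage the normal form $\gamma\langle a,x\rangle = \langle a - d(x),x\rangle$: one shows that any automorphism of this shape which happens to be expressible as $t(x,\vec c)$ with $x = t(x,\vec d)$, $\vec c,\vec d \in I$, automatically has its inverse expressible the same way, because swapping the roles of $\vec c$ and $\vec d$ in the same term $t$ yields precisely a polynomial that agrees with $\gamma^{-1}$ on the nose (both being determined by the derivation they induce, and $t(x,\vec d)\text{ vs }t(x,\vec c)$ swapping $0$ and $d$). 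Once closure under composition, inverses, and the identity are in hand, $\mathrm{PStab}(I\rtimes_{T^{\ast}}Q)$ is a subgroup of $\mathrm{Stab}(I\rtimes_{T^{\ast}}Q)$ and the lemma follows.
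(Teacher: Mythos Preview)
Your overall plan is right, and you correctly isolate closure under inverses as the only nontrivial point (the paper itself opens with ``We need only show closure under inverses''). The gap is in your proposed resolution of that point.

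The swap $\vec c \leftrightarrow \vec d$ does not do what you claim. If $t(x,\vec c)=\gamma(x)$ and $t(x,\vec d)=x$, then after the swap you have the pair $(t(\cdot,\vec d),t(\cdot,\vec c))=(\id,\gamma)$. For this to witness that some $p$ lies in $\mathrm{Tw}_I M$ you need the \emph{second} polynomial to be the identity, but $t(x,\vec c)=\gamma(x)\neq x$. So swapping exhibits nothing about $\gamma^{-1}$; it does not even place the identity in $\mathrm{Tw}_I M$ via this term. Your fallback (``iterate, since $\mathrm{Stab}$ is abelian'') also fails: abelianness does not make $\gamma$ of finite order, so no power of $\gamma$ need equal $\gamma^{-1}$.

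What actually works --- and what the paper does --- uses the $R$-module structure in an essential way. From the witnessing term $t$ one builds the new term
\[
r(x,\vec y,\vec z):=t(x,\vec z)-t(x,\vec y)+x,
\]
and computes in $I\rtimes_{T^{\ast}}Q$ (where the non-action factor-sets vanish) that
\[
F_{r}\bigl(\langle a,x\rangle,\vec c,\vec d\bigr)=\langle a+d_{\gamma}(x),x\rangle=\gamma^{-1}(a,x),\qquad
F_{r}\bigl(\langle a,x\rangle,\vec d,\vec d\bigr)=\langle a,x\rangle.
\]
Thus $\gamma^{-1}$ is an $I$-twin of the identity with the same fixed points, hence lies in $\mathrm{PStab}$. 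Your sketch gestures at ``reassembling'' the term but never writes down such an $r$; once you do, the argument is immediate.
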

\begin{proof}
We need only show closure under inverses. We make the identification of $I$ with $I \times \{ 0 \}$. Let $\gamma \in \mathrm{PStab}(I \rtimes_{T^{\ast}} Q )$ witnessed by the term $t(x,\vec{y})$ and tuples $\vec{c},\vec{d}$; that is, 
\[
 \left\langle a - d_{\gamma}(x), x \right\rangle = p \left( a, x \right) = F_{t} \left( \left\langle a, x \right\rangle, \vec{c} \right) \quad \quad \text{and} \quad \quad \left\langle a, x \right\rangle = F_{t} \left( \left\langle a, x \right\rangle, \vec{d} \right).
\]
Consider the term $r(x,\vec{y},\vec{z}) := t(x,\vec{z}) - t(x,\vec{y}) + x$. Since in the semidirect product, the non-action terms in the 2-cocycle are all trivial, we calculate
\begin{align*}
F_{r} \left( \left\langle a, x \right\rangle, \vec{c}, \vec{d} \right) &= \left\langle a + d(x), x \right\rangle = p^{-1}(a,x)\\
F_{r} \left( \left\langle a, x \right\rangle, \vec{d}, \vec{d} \right) &= \left\langle a, x \right\rangle 
\end{align*} 
so that $p^{-1} \in \mathrm{PStab}(I \rtimes_{T^{\ast}} Q )$.
\end{proof}

The set of \emph{principal derivations} of the datum $(Q,I,\ast)$ is then defined as
\[
\mathrm{PDer}(Q,I,\ast) := \{ \, d(x): \gamma(a,x) = \left\langle a - d(x), x \right\rangle, \gamma \in \mathrm{PStab}(I \rtimes_{T^{\ast}} Q ) \}.
\]
The principal derivations are those derivations which under the isomorphism of Theorem~\ref{thm:stabderiv} correspond to the principal stabilizing automorphisms of the semidirect product $I \rtimes_{T^{\ast}} Q$.

\begin{definition}
Let $\mathcal V$ be a variety of $R$-modules expanded by multilinear operations, $Q,I \in \mathcal V$ and $Q \ast I$ an action. The $1^{\mathrm{st}}$-cohomology of the datum $(Q,I,\ast)$ is 
\[
H^{1}(Q,I,\ast) := \mathrm{Der}(Q,I,\ast)/\mathrm{PDer}(Q,I,\ast)
\] 
and the $1^{\mathrm{st}}$-cohomology of $(Q,I)$ is $H^{1}(Q,I) := \bigcup_{\ast} H^{1}(Q,I,\ast)$ where the union is over possible actions.
\end{definition}

Since the notion of derivation depends on the choice of action, we can restrict to a particular variety $\mathcal V$ by taking the union of $1^{\mathrm{st}}$-cohomology only over those $\mathcal V$-compatible actions
\[
H^{1}_{\mathcal V}(Q,I) = \bigcup \{ H^{1}(Q,I,\ast) : \ast \text{ is } \mathcal V\text{-compatible} \}.
\]

We say a 2-cocycle $T$ for datum $(Q,I)$ is \emph{linear} if the action terms are unary in $I$. Let us first observe that extensions with kernels which are abelian congruences are characterized by abelian ideals with linear 2-cocycles. This will then give a characterization of solvable algebras in the varieties of interest.

\begin{theorem}\label{thm:multiabel}
Let $\mathcal V$ be a variety of $R$-modules expanded by multilinear operations. A cohomology class $[T] \in H^{2}_{\mathcal V}(Q,I)$ represents an extension $\pi: M \rightarrow Q$ in which $I = \ker \pi \triangleleft M$ determines an abelian congruence if and only if $I$ is an abelian algebra and $T$ is linear. 
\end{theorem}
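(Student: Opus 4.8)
The plan is to use the correspondence between extensions and $\mathcal V$-compatible 2-cocycles (Theorem~\ref{thm:2cohomnonabel}, Theorem~\ref{thm:multirep}) together with the term-condition characterization of the commutator for these algebras. Fix a $\mathcal V$-compatible 2-cocycle $T$ representing $\pi:M\rightarrow Q$ with $M\approx I\rtimes_T Q$; by Theorem~\ref{thm:multirep} we may assume $M=I\rtimes_T Q$ and that $I=I_{\ker p_2}$ is identified with $I\times\{0\}$. The congruence determined by $I$ is $\alpha_I$, and we must show $[\alpha_I,1_M]=0$ (equivalently $C(1_M,\alpha_I;0)$) if and only if $I$ is an abelian algebra and every action term $a(f,s)$ is unary in $I$, i.e. $a(f,s)(\vec q,\cdot)$ depends only on the single $I$-coordinate indexed by $s$ when $|s|=1$ and is identically zero when $|s|\geq 2$.

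First I would handle the forward direction. Assume $I$ is abelian and $T$ is linear. The key observation is that the operations of $M=I\rtimes_T Q$ then act on the $I$-coordinate in a way that is ``affine in $I$ over $Q$'': for a term $t$, the evaluation $F_t(\langle a_1,x_1\rangle,\ldots)$ has first coordinate $t^I(\vec a) + t^{\ast,T}(\vec a,\vec x) + t^{\partial,T}(\vec x)$ by Lemma~\ref{lem:semident}, and with $T$ linear and $I$ abelian, $t^I(\vec a)$ is additive in the $\vec a$'s (an abelian algebra's term operations are affine), $t^{\ast,T}$ is the sum of the unary action terms applied to individual $a_i$'s (no products of distinct $I$-coordinates survive because the higher $a(f,s)$ vanish), and $t^{\partial,T}(\vec x)$ is independent of $\vec a$. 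So the first coordinate of $F_t$ is $L_t(\vec a) + c_t(\vec x)$ where $L_t$ is additive in $\vec a$ and $c_t$ depends only on $\vec x$. From this, the term condition $t(\vec a,\vec c)\mathrel{\alpha_I}t(\vec a,\vec d)\Rightarrow t(\vec b,\vec c)\mathrel{\alpha_I}t(\vec b,\vec d)$ for $\vec c\mathrel{\alpha_I}\vec d$ (i.e. $\vec c-\vec d\in I$-coordinates) follows by a direct additivity computation: the difference $t(\vec a,\vec c)-t(\vec a,\vec d)$ lies in $I\times\{0\}$ iff the $Q$-coordinates match, which is independent of the $\vec a$ versus $\vec b$ choice, and once in $I\times\{0\}$ the difference is $L_t$ applied to the $(\vec c-\vec d)$ part, again independent of $\vec a,\vec b$. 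Hence $C(1_M,\alpha_I;0)$ holds, so $[\alpha_I,1_M]\le 0$, i.e. $I$ determines an abelian congruence.

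For the converse, assume $I=\ker\pi$ determines an abelian congruence, so $C(1_M,\alpha_I;0)$. Restricting the term condition to terms and tuples with all arguments in $I\times\{0\}$ gives $C(1_{I},1_{I};0)$ inside $I$ (using that $I$ is a subalgebra and $0_M$ restricts to $0_I$), so $I$ is abelian. To force linearity of $T$, I would exploit the term condition with carefully chosen terms witnessing the commutator generators described in Section~\ref{section:2}: for $f\in F$ with $\ar f=n$ and $s\in[n]^\ast$ with $|s|\geq 2$, pick coordinates $i,j\in s$, and consider the term $t(x,y,\vec z)=f(\ldots)$ evaluated with the $i$-th slot carrying a variable ranging over $I\times\{0\}$, the $j$-th slot carrying another, and the rest lifts $\langle 0,q_k\rangle$; the multilinearity of $f^M$ produces the product term $a(f,\{i,j\})(\vec q,\cdot)$ in the first coordinate, and the failure of this to be $0$ produces a violation of $C(1_M,\alpha_I;0)$ (the term takes a value in $I\times\{0\}$ for one substitution of the $1$-block but not for the $\vec b$-substitution). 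A similar argument with a single $I$-slot among otherwise-lifted arguments and varying one of the $Q$-coordinates outside $s$ shows $a(f,\{i\})(\vec q,a)$ cannot depend on $q_j$ for $j\neq i$ — i.e. the unary action term is genuinely unary. Thus $T$ is linear.

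The main obstacle will be the converse direction: translating the abstract term-condition $C(1_M,\alpha_I;0)$ into the concrete statement that the higher action terms vanish and the unary ones are independent of extraneous $Q$-coordinates. The delicate point is choosing the right terms $t(\vec x,\vec y)$ and the $\alpha_I$-related and $1_M$-related tuples so that the two sides of the implication isolate exactly one action-term summand; multilinearity of the $f^M$ is the essential tool, since it lets one ``extract'' a chosen monomial $a(f,s)$ by plugging $I$-elements into the slots in $s$ and lifts $\langle 0,q_k\rangle$ elsewhere, but one must be careful that the factor-set contributions $T_f$ and the lower-order action terms, which also appear, are the same on both substitutions and hence cancel in the relevant difference. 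I expect this bookkeeping — already foreshadowed by the computations in Lemma~\ref{lem:semident} and Lemma~\ref{lem:2coboundmulti} — to be the technical heart, while the forward direction is a routine ``affine implies abelian'' verification.
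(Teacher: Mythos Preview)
Your overall strategy matches the paper's: use the term representation from Lemma~\ref{lem:semident} to verify the term condition directly in one direction, and use carefully chosen substitutions in the multilinear operations to force the higher action terms to vanish in the other. Two corrections are needed, however.

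First, you have conflated ``abelian congruence'' with ``central congruence'': the theorem asserts $[\alpha_I,\alpha_I]=0$, not $[\alpha_I,1_M]=0$. Your forward-direction computation in fact only establishes $C(\alpha_I,\alpha_I;0)$: the step ``the difference is $L_t$ applied to the $(\vec c-\vec d)$ part, again independent of $\vec a,\vec b$'' is valid only when $\vec a$ and $\vec b$ share $Q$-coordinates, i.e.\ when $\vec a\mathrel{\alpha_I}\vec b$, since the linear map $L_t$ depends on those $Q$-coordinates through the action terms. (Indeed $C(1_M,\alpha_I;0)$ is generally false for nontrivial unary actions --- compare Theorem~\ref{thm:multicentral}.) In the converse direction you must therefore start only from $C(\alpha_I,\alpha_I;0)$; the paper's substitution argument, and yours once corrected, uses exactly this weaker hypothesis with the two $I$-slots playing the $\alpha$- and $\beta$-roles respectively.

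Second, the final claim of your converse --- that $a(f,\{i\})(\vec q,a)$ cannot depend on $q_j$ for $j\neq i$ --- is both unnecessary and false. ``$T$ is linear'' means only that $a(f,s)\equiv 0$ for $|s|\geq 2$; the unary actions $a(f,i)$ are \emph{supposed} to depend on the remaining $Q$-coordinates (that is precisely the action of $Q$ on $I$). Drop that claim and the argument is complete.
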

\begin{proof}
The 2-cocycle $T$ determines the extensions $M = I \rtimes_{T} Q \rightarrow Q$. We identify $I$ with its copy in $I \rtimes_{T} Q$. Let $\alpha$ be the congruence determined by $I$. First, assume $\alpha$ is an abelian congruence. By realization, the actions terms $a(f,s)$ are given by the multilinear operation symbols $a(f,s)(\vec{x},\vec{a}) = f(l(\vec{x}))_{s}[\vec{a}]$ for a fixed lifting $l:Q \rightarrow M$. Take multilinear $f \in F$ with $n \geq 2$. For any $\vec{z} \in M^{n-2}$, $a,b \in I$ we have $f(a,0,\vec{z})=0=f(0,0,\vec{z}) \Longrightarrow f(a,b,\vec{z}) = f(0,b,\vec{z}) = 0$ since $\alpha$ is abelian; therefore, $a(f,s) = 0$ for $s=\{1,2\}$. A similar argument shows $a(f,s) = 0$ for all $f \in F$ with $\ar f \geq 2$ and $|s|>1$; thus, $a(f,i)$ are the only possible nontrivial action terms. The same argument shows the mutlilinear operations $f \in F$ on $I$ are all trivial. This implies the ideal $I \triangleleft M$ carries only a module structure which is abelian in the TC-commutator \cite{mckenziesnow}.

Now, assume $I$ is abelian and $T$ is linear. We directly show the congruence $\alpha$ is abelian in the TC-commutator. Take a term $t$ and tuples $(\vec{a},\vec{x}), (\vec{b},\vec{x}) \in (I \times Q)^{k}$, $(\vec{c},\vec{z}), (\vec{d},\vec{z}) \in (I \times Q)^{m}$. Note we are using the convention $(\vec{a},\vec{x}) = \big( \left\langle a_{1},x_{1} \right\rangle, \ldots, \left\langle a_{k},x_{k} \right\rangle \big)$ to write the tuples. Assume 
\begin{align}\label{eqn:51}
F_{t} \big( (\vec{a},\vec{x}),(\vec{c},\vec{z}) \big) = F_{t} \big( (\vec{a},\vec{x}),(\vec{d},\vec{z}) \big).
\end{align}

The following claim on the representation of terms in $I \rtimes_{T} Q$ can be established by induction on their generation. The useful facts are that the action terms are unary and so linear in $I$, and the multilinear operations in $I$ are trivial.

\begin{claim}
Let $(Q,I,\ast)$ be affine datum in a variety $\mathcal V$ of modules expanded by multilinear operations. If $t(\vec{x})$ is a term in the signature and $T$ a $\mathcal V$-compatible 2-cocycle of the datum, then there exists operations $s$ and $s^{\ast}$ where $s$ and $s^{\ast}$ are iterations of action terms such that the interpretation of the term $t$ in the algebra $I \rtimes_{T} Q$ is given by
\begin{align}\label{eqn:52}
F_{t} \big( (\vec{a},\vec{x}),(\vec{c},\vec{z}) \big)  = \left\langle \, s((\vec{x},\vec{z}),\vec{a}) + s^{\ast}((\vec{x},\vec{z}), \vec{c}) + t^{\partial,T}(\vec{x},\vec{z}), t^{Q}(\vec{x},\vec{z}) \,\right\rangle
\end{align}
for $(\vec{a},\vec{x}) \in (I \times Q)^{k}$, $(\vec{c},\vec{z}) \in (I \times Q)^{m}$.
\end{claim}

Now apply the representation in Eq.(\ref{eqn:52}) to both terms in Eq.(\ref{eqn:51}) to conclude $s^{\ast}((\vec{x},\vec{z}),\vec{c}) = s^{\ast}((\vec{x},\vec{z}),\vec{d})$. To this equality we can add the operations $s((\vec{x},\vec{z}),\vec{b})$ and $t^{\partial,T}(\vec{x},\vec{z})$ to both sides and again use Eq.(\ref{eqn:52}) to conclude $F_{t} \big( (\vec{b},\vec{x}),(\vec{c},\vec{z}) \big) = F_{t} \big( (\vec{b},\vec{x}),(\vec{d},\vec{z}) \big)$. This shows $\alpha$ satisfies the term-condition and so is an abelian congruence.
\end{proof}

For an ideal $I \triangleleft M$, the derived series is defined by $[I]^{0} = I$ and $[I]^{n+1} = [[I]^{n},[I]^{n}]$. The ideal $I$ is \emph{n-step solvable} if $[I]^{n}=0$ and the algebra $M$ is \emph{n-step solvable} if $[M]^{n}=0$.

\begin{proposition}\label{prop:solvable}
Let $\mathcal V$ be a variety of $R$-modules expanded by multilinear operations. Then $M \in \mathcal V$ is a n-step solvable algebra if and only if $M$ can be represented as a right-associated product
\[
M \approx Q_{n} \rtimes_{T_{n-1}} Q_{n-1}\rtimes_{T_{n-2}} \cdots \rtimes_{T_{1}} Q_{1}
\]
where each $Q_{i} \in \mathcal V$ is abelian and $T_i$ is linear.  
\end{proposition}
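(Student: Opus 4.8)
The statement is an iterated-extension characterization of solvability, and the natural route is induction on $n$, using Theorem~\ref{thm:multiabel} as the engine for a single abelian step. The base case $n=0$ is immediate (solvable means $M=0$) and $n=1$ is exactly the statement that $M$ is abelian iff $M \approx I\rtimes_{T}Q$ with trivial codomain, i.e. just $M$ itself abelian (or, phrased via the proposition, $M \approx Q_1$ abelian). So I would focus on the inductive step.

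First I would set up the correspondence between the derived series of an ideal and the derived series of a quotient. Given $M$ with $[M]^{n}=0$, let $I=[M]^{1}=[M,M]$, which is an ideal, and let $Q_1 = M/I$. Since $[M,M]\le I$ visibly (equality here), $Q_1$ is abelian; and the derived series of $I$ satisfies $[I]^{n-1}=[M]^{n}=0$, so $I$ is $(n-1)$-step solvable as an algebra in $\mathcal V$ (it is itself an algebra in the variety, being a subalgebra-ideal). By the inductive hypothesis applied to $I$, we get $I \approx Q_n \rtimes_{T_{n-1}}\cdots\rtimes_{T_2} Q_2$ with each $Q_i$ abelian and each $T_i$ linear. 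Then I need to promote the extension $0\to I\to M\to Q_1\to 0$ to the form $M \approx I\rtimes_{T_1}Q_1$ with $T_1$ linear: this is where Theorem~\ref{thm:multirep} gives a $\mathcal V$-compatible $2$-cocycle $T_1$ realizing the extension, and Theorem~\ref{thm:multiabel} gives that $T_1$ is linear precisely because $I=\ker\pi$ determines an abelian congruence (indeed $[I,I]\le[M,M]\cap\cdots$; more simply, $I$ abelian as an algebra together with the fact that $\alpha_I$ is abelian — which holds here since $[\alpha_I,\alpha_I]=\alpha_{[I,I]}$ and $[I,I]\le[I]^{1}=[M]^{2}\le[M]^{1}=I$... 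I need $[I,I]=0$, not just $\le I$). Let me restate: to invoke Theorem~\ref{thm:multiabel} I need $I$ abelian as an algebra, i.e. $[I,I]=0$. That is true when $n\ge 1$ only if $[M]^{2}=0$, which is the $n=2$ case, not general $n$. So the naive "$I=[M,M]$" choice is wrong for the abelian step; instead one must peel off an abelian \emph{top} quotient.

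So I would reorganize: take $J=[M]^{n-1}$, the last nonzero term (if $M$ is exactly $n$-step solvable), which is abelian since $[J,J]=[M]^{n}=0$, and it is an ideal. Then $M/J$ is $(n-1)$-step solvable: $[M/J]^{n-1}=([M]^{n-1}+J)/J=J/J=0$. Apply the inductive hypothesis to $M/J$ to write $M/J \approx Q_n\rtimes\cdots\rtimes_{T_2}Q_2$ — wait, that peels $J$ off the bottom, giving a representation of the quotient, and then I still must reassemble $M$ from $J$ and $M/J$ as a single $\rtimes_{T}$ with $J$ abelian. That is the clean induction: $J$ abelian, $M/J$ has the $(n-1)$-fold form, and $0\to J\to M\to M/J\to 0$ plus Theorem~\ref{thm:multirep} and Theorem~\ref{thm:multiabel} gives $M\approx J\rtimes_{T}(M/J)$ with $T$ linear (linearity from Theorem~\ref{thm:multiabel}, since $\alpha_J$ abelian — here $[\alpha_J,\alpha_J]=\alpha_{[J,J]}=\alpha_0=0$, using the commutator identity recorded in Section~\ref{section:2}). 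Substituting the $(n-1)$-fold right-associated form of $M/J$ for the rightmost factor yields the $n$-fold product with $J=Q_n$ at the left and the linear cocycles $T_{n-1}=T$, $T_{n-2},\dots,T_1$ from the inductive step. For the converse, given $M\approx Q_n\rtimes_{T_{n-1}}\cdots\rtimes_{T_1}Q_1$ with all $Q_i$ abelian and all $T_i$ linear, I would show $[M]^{n}=0$ by induction: the rightmost factor $Q_1$ is an abelian quotient so $[M]^{1}\le\ker(M\to Q_1)\approx Q_n\rtimes\cdots\rtimes_{T_2}Q_2$, and this kernel is $(n-1)$-step solvable by the inductive hypothesis, so $[M]^{n}=[[M]^{1}]^{n-1}=0$.

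The main obstacle — and the one subtlety worth being careful about — is exactly the bookkeeping of which end of the iterated product corresponds to which end of the derived series: one must peel off an abelian \emph{quotient} $M/[M]^{n-1}$ (equivalently split off the top factor $Q_1$) at each stage rather than an abelian subideal, so that Theorem~\ref{thm:multiabel} applies with the kernel $J=[M]^{n-1}$ genuinely abelian (i.e. $[J,J]=0$), and so that linearity of $T$ is licensed by the abelianness of the congruence $\alpha_J$ via the identity $[\alpha_J,\alpha_J]=\alpha_{[J,J]}$ noted in Section~\ref{section:2}. Everything else — associativity of the iterated semidirect construction, compatibility of the cocycles with $\mathcal V$ (Theorem~\ref{thm:multirep}, Lemma~\ref{lem:semident}), and the quotient computation $[M/J]^{k}=([M]^{k}+J)/J$ — is routine.
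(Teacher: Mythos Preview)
Your proof is correct and matches the paper's approach: both peel off the abelian ideal $[M]^{n-1}$ as the leftmost factor $Q_n$ (invoking Theorem~\ref{thm:multiabel} and Theorem~\ref{thm:multirep} for linearity of the cocycle) and recurse on the $(n-1)$-step solvable quotient $M/[M]^{n-1}$. For the converse the paper argues by explicit commutator calculus via the homomorphism property, while you induct on the kernel being $(n-1)$-step solvable; these are equivalent, though your version tacitly uses that the derived series of an ideal $K\triangleleft M$ agrees whether computed in $M$ or in $K$ as a standalone algebra---a standard fact in congruence modular varieties, implicit in the identification $[\alpha_I,\alpha_J]=\alpha_{[I,J]}$ recorded in Section~\ref{section:2}.
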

\begin{proof}
Assume $M \in \mathcal V$ is n-step solvable. Set $Q_{k} = [M]^{k-1}/[M]^{k}$ for $1 \leq k \leq n$. By Theorem~\ref{thm:multiabel} and Theorem~\ref{thm:multirep}, we have $M/[M]^{k+1} \approx [M]^{k}/[M]^{k+1} \rtimes_{T_{k}} M/[M]^{k}$ where $[M]^{k}/[M]^{k+1}$ is abelian and each $T_k$ is linear. The right-associated product follows recursively.

Assume now we have the right-associated representation 
\[
M \approx Q_{n} \rtimes_{T_{n-1}} Q_{n-1}\rtimes_{T_{n-2}} \cdots \rtimes_{T_{1}} Q_{1}
\]
for abelian algebras $Q_i \in \mathcal V$ and compatible linear 2-cocycles $T_i$. Set $A_{i} := Q_{i} \rtimes_{T_{n-1}} Q_{n-1} \rtimes_{T_{n-2}} \cdots \rtimes_{T_{1}} Q_{1}$. By Theorem~\ref{thm:multiabel}, the second-projection $p_{2}: A_{i} = Q_{i} \rtimes_{T_{i-1}} A_{i-1} \rightarrow A_{i-1}$ realizes an extension in which $Q_{i} = \ker p_{2}$ determines an abelian congruence. We can apply the homomorphism property of the commutator to see that
\[
[A_{2},A_{2}] \vee Q_{2}/Q_{2} = [A_{2} \vee Q_{2}/Q_{2}, A_{2} \vee Q_{2}/Q_{2} ] = [Q_{1},Q_{1}] = 0
\]  
since $Q_{1}$ is abelian; thus, $[A_{2},A_{2}] \leq Q_{2}$ and so $[A_{2}]^{2} \leq [Q_{2},Q_{2}]=0$. Assume we have that $A_{k}$ is k-step solvable. Then 
\begin{align*}
[A_{k+1},A_{k+1}]^{k} \vee Q_{k+1}/Q_{k+1} &= \left[ [A_{k+1}]^{k-1} \vee Q_{k+1}/Q_{k+1}, [A_{k+1}]^{k-1} \vee Q_{k+1}/Q_{k+1} \right] \\
&= \left[ [A_{k+1}]^{k-2} \vee Q_{k+1}/Q_{k+1}  \right]^{2} \\
&= \vdots \\
&= [A_{k+1} \vee Q_{k+1}/Q_{k+1}]^{k} = \left[ A_{k} \right]^{k} = 0 
\end{align*}
which implies $[A_{k+1},A_{k+1}]^{k} \leq Q^{k+1}$ and so $[A_{k+1}]^{k+1} \leq [Q_{k+1},Q_{k+1}] = 0$; therefore, $A_{k+1}$ is k+1-step solvable. The proof is completed by induction since $A_{n} \approx M$.
\end{proof}

We can loosely paraphrase Proposition~\ref{prop:solvable} as stating that solvable algebras are iterated linear translations of semidirect products of abelian algebras; analogously, Proposition~\ref{prop:nilpotent} will state that nilpotent algebras are iterated linear translations of direct products of abelian algebras.

We say $(Q,I,\ast)$ is \emph{affine datum} if $I$ is an abelian algebra and if the terms in the action $Q \ast I$ are all unary in $I$. If $T$ is a linear 2-cocycle for datum $(Q,I)$ in which $I$ is an abelian algebra and $T \sim T'$, then $T'$ has the same action terms as $T$; to see this, for any 2-coboundary which witnesses $T-T' \in B^{2}(Q,I)$, the operations in (B4) must be zero because the higher-arity action terms and multilinear operation in $I$ are both trivial. This means equivalence respects the class of extensions which realize a fixed affine datum. There is an addition on equivalence classes of 2-cocycles $[T] + [T']:=[T + T']$ by the addition induced by $I$ on the factor-sets; that is, 
\[
(T + T')_{+} := T_{+} + T_{+}' \quad \quad (T + T')_{r} := T_{r} + T_{r}' \quad \quad (T + T')_{f} := T_{f} + T_{f}'
\]
and the action terms of $T+T'$ are exactly the same as $T$ and $T'$. Since the action terms are unary and the multilinear operations in $I$ are trivial, it is easy to see from (B1)-(B3) that the addition on equivalence classes is a well-defined abelian group operation.

We say $(Q,I,\ast)$ is datum in $\mathcal V$ if $Q,I \in \mathcal V$ and $Q \ast I$ is a $\mathcal V$-compatible action. We say $T$ is a 2-cocycle for $(Q,I,\ast)$ if it is a 2-cocycle for $(Q,I)$ and the action-terms of $T$ are given by the action $Q \ast I$. The $2^{\mathrm{nd}}$-cohomology for the datum $(Q,I,\ast)$ in the variety $\mathcal V$ is the set $H^{2}_{\mathcal V}(Q,I,\ast)$ of equivalence classes of strictly $\mathcal V$-compatible 2-cocycles for the datum $(Q,I,\ast$).

\begin{theorem}\label{thm:affinecohom}
Let $\mathcal V$ be a variety of $R$-modules expanded by multilinear operations $F$ and $(Q,I,\ast)$ affine datum in $\mathcal V$. The set of equivalence classes of extensions which realize the datum $(Q,I,\ast)$ in the variety $\mathcal V$ is in bijective correspondence with the abelian group $H^{2}_{\mathcal V}(Q,I,\ast)$. The equivalence class of the semidirect product corresponds to the equivalence class of the trivial 2-cocycle.
\end{theorem}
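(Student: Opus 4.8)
The plan is to deduce the statement from Theorem~\ref{thm:2cohomnonabel} by restricting the nonabelian correspondence there to the sub-collection of extensions realizing the fixed affine datum, the only genuinely new ingredient being the identification, over a fixed $\mathcal V$-compatible action, of $\mathcal V$-compatible 2-cocycles with strictly $\mathcal V$-compatible ones.

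First I would record the following observation. If $\ast$ is a $\mathcal V$-compatible action and $T$ is a 2-cocycle for $(Q,I)$ whose action terms are exactly $\ast$, then $T$ is $\mathcal V$-compatible if and only if it is strictly $\mathcal V$-compatible, i.e.\ $\left\langle I \cup Q, \tau^{I}, \tau^{Q}, T \right\rangle \vDash t^{\partial,T} = s^{\partial,T}$ for all $t=s \in \Id \mathcal V$. Indeed, by Lemma~\ref{lem:semident} the term $t^{\ast,T}$ is built only from the module operations and the action terms of $T$; since these coincide with the action terms of $T^{\ast}$, we get $t^{\ast,T} = t^{\ast,T^{\ast}}$ for every term $t$, while $\mathcal V$-compatibility of $\ast$ says precisely $t^{\ast,T^{\ast}} = s^{\ast,T^{\ast}}$ for $t=s \in \Id \mathcal V$. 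Hence the general 2-cocycle identity $t^{\ast,T} + t^{\partial,T} = s^{\ast,T} + s^{\partial,T}$ collapses, after cancelling the equal action parts in the abelian group $I$, to the strict identity. Because $\ast$ is unary in $I$ for affine datum, every 2-cocycle for $(Q,I,\ast)$ is automatically linear, so the strictly $\mathcal V$-compatible 2-cocycles for $(Q,I,\ast)$ are exactly the $\mathcal V$-compatible linear 2-cocycles for $(Q,I)$ with action $\ast$; modding out by $B^{2}(Q,I)$ and using, as in the remarks preceding the statement, that in the affine case each summand of $t^{\partial,T}$ involves a single factor set linearly (so $t^{\partial,T}$ is additive in the factor-set component of $T$), this descends to the abelian group $H^{2}_{\mathcal V}(Q,I,\ast)$.

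Next I would restrict the bijection of Theorem~\ref{thm:2cohomnonabel}, which (via Theorem~\ref{thm:multirep} and Theorem~\ref{thm:10}) matches the equivalence classes of extensions of $Q$ by $I$ in $\mathcal V$ with the classes of $\mathcal V$-compatible 2-cocycles for $(Q,I)$. Among these, Theorem~\ref{thm:multiabel} picks out the classes whose extension has kernel $I$ determining an abelian congruence as exactly the classes of linear $T$ with $I$ abelian; and the discussion just before the present theorem shows $\sim$ leaves the action terms of such a $T$ unchanged, so ``$T$ has action $\ast$'' is a well-defined property of $[T]$ and corresponds to ``the extension realizes $(Q,I,\ast)$''. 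Therefore the equivalence classes of extensions realizing $(Q,I,\ast)$ in $\mathcal V$ biject with the classes of $\mathcal V$-compatible linear 2-cocycles with action $\ast$, i.e.\ with $H^{2}_{\mathcal V}(Q,I,\ast)$ by the first step; conversely each such $T$ is realized by $I \rtimes_{T} Q$, which lies in $\mathcal V$ by the converse direction of Theorem~\ref{thm:multirep}. Finally, $I \rtimes_{\ast} Q = I \rtimes_{T^{\ast}} Q$ realizes $T^{\ast}$, all of whose factor sets vanish, so the semidirect product corresponds to the class of the trivial 2-cocycle, in agreement with the last sentence of Theorem~\ref{thm:2cohomnonabel}.

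The main obstacle is the first step: one must be sure that $t^{\ast,T}$ depends on $T$ only through its action terms, so that $\mathcal V$-compatibility of $\ast$ is exactly what cancels the action part and reduces the general 2-cocycle identity to the strict one (and, relatedly, that the addition of factor sets stays within the strictly $\mathcal V$-compatible cocycles). Once this is in hand the theorem is bookkeeping layered on Theorems~\ref{thm:multirep}, \ref{thm:10}, \ref{thm:2cohomnonabel} and \ref{thm:multiabel}.
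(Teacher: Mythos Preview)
Your proposal is correct and follows essentially the same route as the paper: the key step in both is that, once the action $\ast$ is $\mathcal V$-compatible, $t^{\ast,T}=t^{\ast,T^{\ast}}$ cancels from the general 2-cocycle identity so that $\mathcal V$-compatibility reduces to strict $\mathcal V$-compatibility, after which Theorem~\ref{thm:10} (packaged by you through Theorem~\ref{thm:2cohomnonabel}) gives the bijection and the additivity $t^{\partial,T+T'}=t^{\partial,T}+t^{\partial,T'}$ yields the abelian group structure. The only cosmetic difference is that you route the restriction through Theorem~\ref{thm:2cohomnonabel} and Theorem~\ref{thm:multiabel}, whereas the paper cites Lemma~\ref{lem:semident}, Theorem~\ref{thm:multirep} and Theorem~\ref{thm:10} directly.
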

\begin{proof}
Since we are given that the action $Q \ast I$ is $\mathcal V$-compatible, it follows from Lemma~\ref{lem:semident} and Theorem~\ref{thm:multirep} that for any 2-cocycle $T$ for $(Q,I,\ast)$, $I \rtimes_{T} Q \in \mathcal V$ if and only if $T$ is strictly $\mathcal V$-compatible. Then Theorem~\ref{thm:10} guarantees the set $H^{2}_{\mathcal V}(Q,I,\ast)$ parametrizes the equivalence classes of extensions in $\mathcal V$ which realize $(Q,I,\ast)$. Since the action terms are linear and the multilinear operations are trivial in $I$, it follows by the same derivation as in Lemma~\ref{lem:semident} that for any term $t$ in the signature of $\mathcal V$ we have $t^{\partial,T+T'} = t^{\partial,T} + t^{\partial,T'}$. Because the action terms of $T + T'$ are the same as $T$ and $T'$, it follows that $T + T'$ is again strictly $\mathcal V$-compatible whenever both $T$ and $T'$ are; altogether, the abelian group addition on 2-cocycles for affine datum respects equivalence classes and compatibility.
\end{proof}

\begin{remark}\label{remark:affineident}
In the case of affine datum $(Q,I,\ast)$, there is slight modification to the representation of terms from Lemma~\ref{lem:semident}. Let $M = I \rtimes_{T} Q$ realize the datum. In the evaluation $F_{t}(\vec{m})$ of the term $t(\vec{x})$ in the algebra $I \rtimes_{T} Q$, the operation $t^{\partial,T}$ gathers together all instances of the factor-sets of the 2-cocycle $T_{+}$, $T_{r}$ and $T_{f}$; for general datum, it involves evaluation of the action terms and all the operations of the algebra $I$ and so $t^{\partial,T}$ may depend on both the $I$ and $Q$ coordinates of $\vec{m}$. In the case of affine datum, the multilinear operations of $I$ are trivial and all the action terms are unary in $I$. Since the factor-sets of the 2-cocycle must appear in the operations of $t^{\partial,T}$, it cannot depend on the $I$-coordinates. Then for $m_i = \left\langle b_{i}, x_{i} \right\rangle \in I \times Q$ we can write
\[
F^{M}_{t}(\vec{m}) = \left\langle t^{\ast,T}(\vec{b},\vec{x}) + t^{\partial,T}(\vec{x}),  t^{Q}(\vec{x}) \right\rangle.
\]
\end{remark}

Take an extension $\pi: M \rightarrow Q$ realizing affine datum $(Q,I,\ast)$ and determined by the 2-cocycle $T$. Then Theorem~\ref{thm:10} yields the isomorphism $\psi: M \ni a \longmapsto \left\langle a - l \circ \pi (a), \pi(a) \right\rangle \in I \rtimes_{T} Q$ where in the algebra $I \rtimes_{T} Q$ the multilinear operations are computed by
\begin{align}\label{eqn:20}
F_{f}\left( \vec{a}, \vec{x} \right) = \left\langle \sum_{i \in [\ar f]} a(f,i)(\vec{x},\vec{a}) + T_{f}(\vec{x}), f^{Q}(\vec{x}) \right\rangle .
\end{align}
This agrees with the more general notion of extensions realizing affine datum developed for arbitrary varieties of universal algebras in \cite{wiresI}. Since varieties of multilinear module expansions form a special subclass of varieties with a difference term, the next two results follow directly by comparison of Eq~\eqref{eqn:20} with the characterization of central extensions and nilpotent algebras from \cite{wiresI}; however, we will argue separately. A 2-cocycle for datum $(Q,I)$ is \emph{action-trivial} if the action terms are all zero.

\begin{theorem}\label{thm:multicentral}
Let $\mathcal V$ be a variety of $R$-modules expanded by multilinear operations. A class $[T] \in H^{2}_{\mathcal V}(Q,I)$ represents an extension $\pi: M \rightarrow Q$ in which $I = \ker \pi \triangleleft M$ determines a central congruence if and only if $I$ is an abelian algebra and $T$ is action-trivial. 
\end{theorem}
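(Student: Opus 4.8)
The plan is to follow the pattern of the proof of Theorem~\ref{thm:multiabel}, replacing ``linear'' by ``action-trivial'' and ``abelian congruence'' by ``central congruence''. Fix a representative $T$ of the class and, via Theorem~\ref{thm:multirep}, identify $M$ with $I \rtimes_{T} Q$ and $I$ with $I \times \{0\}$; write $\alpha \in \Con M$ for the congruence determined by $I$. By the identity $[\alpha_{I},\alpha_{J}] = \alpha_{[I,J]}$, together with symmetry of the commutator in this Mal'cev setting, $\alpha$ is a central congruence if and only if $[M,I] = 0$; and $[M,I]$ is the ideal generated by the elements $f^{M}(\vec{m})$ with $f \in F$ multilinear and at least one entry of $\vec{m}$ lying in $I$. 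The whole statement therefore comes down to evaluating these generators in $I \rtimes_{T} Q$.

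Assume first that $\alpha$ is central. Since $\alpha \le 1_{M}$ and the commutator is monotone, $[\alpha,\alpha] \le [1_{M},\alpha] = 0$, so $\alpha$ is abelian; Theorem~\ref{thm:multiabel} then gives that $I$ is an abelian algebra and that $T$ is linear, i.e. the only possibly nonzero action terms are the unary ones $a(f,i)$ with $\ar f \ge 2$ (for unary $f$ the index set $[\ar f]^{\ast}$ is empty, so there is nothing to kill there). Fix $f \in F$ with $n := \ar f \ge 2$, an index $i \in [n]$, a tuple $\vec{x} \in Q^{n}$, and $b \in I$, and set $\vec{m} \in M^{n}$ to have $i$-th entry $\langle b,0 \rangle$ and $k$-th entry $\langle 0,x_{k} \rangle$ for $k \ne i$. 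In the formula defining $F_{f}$ on $I \rtimes_{T} Q$, the $Q$-component of $F_{f}(\vec{m})$ is $f^{Q}$ of a tuple with a zero entry, hence $0$; in the $I$-component, $f^{I}(\cdot)$ vanishes because its argument tuple has a zero entry and $n \ge 2$, $T_{f}$ vanishes by (T3) since the $i$-th $Q$-entry is $0$, and in $\sum_{s \in [n]^{\ast}} a(f,s)$ only singleton $s$ survive (linearity of $T$), and among these only $s = \{i\}$ is nonzero, the others being $R$-linear in a zero $I$-slot by (T4). Hence $F_{f}(\vec{m}) = \langle a(f,i)(x_{1},\ldots,x_{i-1},b,x_{i+1},\ldots,x_{n}),\, 0 \rangle$; since the $i$-th entry of $\vec{m}$ lies in $I$ and $[M,I]=0$, this is $\langle 0,0 \rangle$, and as $\vec{x}$ and $b$ were arbitrary, $a(f,i) \equiv 0$. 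Thus $T$ is action-trivial.

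Conversely, assume $I$ is abelian and $T$ is action-trivial. Since $I$ is abelian, $[I,I]=0$; as $[I,I]$ contains $f^{I}(\vec{a})$ for every multilinear $f$ and every $\vec{a} \in I^{\ar f}$, all operations $f^{I}$ are trivial, so the multilinear operations of $I \rtimes_{T} Q$ reduce to $F_{f}(\vec{a},\vec{x}) = \langle f^{I}(\vec{a}) + T_{f}(\vec{x}),\, f^{Q}(\vec{x}) \rangle = \langle T_{f}(\vec{x}),\, f^{Q}(\vec{x}) \rangle$. To see $[M,I]=0$ it suffices that each generator vanish: if $\vec{m} \in M^{n}$ has $i$-th entry $\langle b_{i},x_{i} \rangle$ lying in $I$, so $x_{i}=0$, then $F_{f}(\vec{m}) = \langle T_{f}(\vec{x}),\, f^{Q}(\vec{x}) \rangle$ with $\vec{x}$ the tuple of $Q$-components of $\vec{m}$, and here $T_{f}(\vec{x})=0$ by (T3) and $f^{Q}(\vec{x})=0$ by multilinearity, both because $x_{i}=0$. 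Hence every generator is $\langle 0,0 \rangle$, so $[M,I]=0$ and $\alpha$ is central.

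The only step needing real care is the bookkeeping in the forward direction: one must check that, under linearity of $T$ and the normalizations (T3)--(T4), evaluating $F_{f}$ on a tuple with a single $I$-slot isolates exactly the one action term $a(f,i)$. Once those properties are invoked this is purely mechanical, and no further obstacle is anticipated; the substance of the proof is entirely the translation of centrality into the vanishing of the generators of $[M,I]$ and reading those off from the explicit operations of $I \rtimes_{T} Q$.
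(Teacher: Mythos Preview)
Your proof is correct and reaches the same conclusion, but via a genuinely different mechanism than the paper. The paper works throughout with the term-condition commutator: for the forward direction it picks $j \neq i$, sets $z_j = 0$, and uses the centralizing implication $C(\alpha_I,1_M;0)$ directly on the polynomial $a(f,i)$ to force $a(f,i)(\vec{x},\vec{a}) = a(f,i)(\vec{z},\vec{a}) = 0$; for the converse it appeals to the representation $F_t(\vec{a},\vec{x}) = \langle t^I(\vec{a}) + t^{\partial,T}(\vec{a},\vec{x}), t^Q(\vec{x})\rangle$ and verifies the term-condition as in the abelian case. You instead pass through the ideal-theoretic commutator $[M,I]$ and its explicit generating set $\{f^M(\vec{m}) : \text{some } m_i \in I\}$, evaluating each generator directly in $I \rtimes_T Q$. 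Your route is more concrete and avoids invoking the term-condition machinery at all, at the cost of relying on the identification $[\alpha_I,\alpha_J] = \alpha_{[I,J]}$ (and symmetry) stated in the preliminaries; the paper's route stays closer to the universal-algebraic framework it is ultimately generalizing. Both buy the result with comparable effort; your bookkeeping in isolating $a(f,i)$ via (T3)--(T4) is exactly right.
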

\begin{proof}
Assume the congruence $\alpha_{I}$ determined by the ideal $I$ is central; in particular, $I$ is an abelian algebra and the action terms are unary in $I$ according to Theorem~\ref{thm:multiabel}. Take $f \in F$, $i \in [f]$ and $\vec{x} \in Q^{\ar f}$, $\vec{a} \in I^{\ar f}$. Since $\ar f \geq 2$, choose $i \neq j \in [ \ar f ]$ and $\vec{z} \in Q^{\ar f}$ such that $z_{j} = 0$. Then applying the term-condition we see that
\[
a(f,i)(\vec{z},\vec{0}) = 0 = a(f,i)(\vec{x},\vec{0}) \quad \Longrightarrow  \quad 0 = a(f,i)(\vec{z},\vec{a}) = a(f,i)(\vec{x},\vec{a})
\]
since $[I,M]=0$; thus, $a(f,i) \equiv 0$ and so $T$ is action-trivial.

Now assume $I$ is an abelian algebra and $T$ is action-trivial. Then for a term $t$, the interpretation in $I \rtimes_{T} Q$ is given by $F_{t}\left( \vec{a}, \vec{x} \right) = \left\langle t^{I}(\vec{a}) + t^{\partial,T}(\vec{a},\vec{x}), t^{Q}(\vec{x}) \right\rangle$ where no action-terms appear in $t^{\partial,T}$. We can follow the argument in Theorem~\ref{thm:multiabel} and verify the term-condition to show $\alpha_{I}$ is a central congruence.
\end{proof}

\begin{proposition}\label{prop:nilpotent}
Let $\mathcal V$ be a variety of $R$-modules expanded by multilinear operations. Then $M \in \mathcal V$ is a n-step nilpotent algebra if and only if $M$ can be represented as a right-associated product
\[
M \approx Q_{n} \rtimes_{T_{n-1}} Q_{n-1}\rtimes_{T_{n-2}} \cdots \rtimes_{T_{1}} Q_{1}
\]
where each $Q_{i} \in \mathcal V$ is abelian and $T_i$ is action-trivial. 
\end{proposition}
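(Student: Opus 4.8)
The plan is to follow the template of the proof of Proposition~\ref{prop:solvable}, replacing the derived series by the lower central series and Theorem~\ref{thm:multiabel} by Theorem~\ref{thm:multicentral}. I first introduce the lower central series of $M$ by $[M]_0 := M$ and $[M]_{k+1} := [M,[M]_k]$ (the commutator of the improper ideal $M$ with $[M]_k$), so that $M$ is $n$-step nilpotent precisely when $[M]_n = 0$; with this indexing a $1$-step nilpotent algebra is abelian, matching the $1$-step solvable case.

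For the forward implication, assume $[M]_n = 0$ and set $Q_k := [M]_{k-1}/[M]_k$ for $1 \le k \le n$. The one place where the argument genuinely differs from the solvable case is the observation that $[M]_{k-1}/[M]_k$ determines a \emph{central} (not merely abelian) congruence in $M/[M]_k$: applying the homomorphism property of the commutator to the quotient map $M \to M/[M]_k$, the commutator of the full congruence of $M/[M]_k$ with the congruence associated to $[M]_{k-1}/[M]_k$ pulls back to $([M,[M]_{k-1}] \vee [M]_k)/[M]_k = [M]_k/[M]_k = 0$. Theorem~\ref{thm:multicentral} together with Theorem~\ref{thm:multirep} then yields $M/[M]_k \approx ([M]_{k-1}/[M]_k) \rtimes_{T} M/[M]_{k-1}$ with $[M]_{k-1}/[M]_k$ abelian and $T$ action-trivial. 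Iterating this from $k = n$ down to $k = 2$, with base case $M/[M]_1 = Q_1$ (which is abelian), and relabelling the resulting cocycles as $T_{n-1}, \ldots, T_1$, produces the right-associated representation $M \approx Q_n \rtimes_{T_{n-1}} Q_{n-1} \rtimes_{T_{n-2}} \cdots \rtimes_{T_1} Q_1$.

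For the converse, given such a representation with each $Q_i \in \mathcal V$ abelian and each $T_i$ action-trivial, set $A_i := Q_i \rtimes_{T_{i-1}} Q_{i-1} \rtimes_{T_{i-2}} \cdots \rtimes_{T_1} Q_1$, so that $A_1 = Q_1$ and $A_n \approx M$. By Theorem~\ref{thm:multicentral}, the second projection $p_2 : A_i = Q_i \rtimes_{T_{i-1}} A_{i-1} \to A_{i-1}$ realizes an extension in which $Q_i = \ker p_2$ determines a central congruence, i.e.\ $[A_i, Q_i] = 0$, and $A_{i-1} \approx A_i/Q_i$. I then show by induction on $i$ that $A_i$ is $i$-step nilpotent: the base case $A_1 = Q_1$ is abelian; for the inductive step, the homomorphism property of the commutator applied to $A_{k+1} \to A_{k+1}/Q_{k+1} \approx A_k$ gives $([A_{k+1}]_k \vee Q_{k+1})/Q_{k+1} = [A_k]_k = 0$, so $[A_{k+1}]_k \le Q_{k+1}$, and therefore $[A_{k+1}]_{k+1} = [A_{k+1},[A_{k+1}]_k] \le [A_{k+1}, Q_{k+1}] = 0$. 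Since $A_n \approx M$, this finishes the proof.

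The argument is essentially routine once Theorem~\ref{thm:multicentral} is in hand; the only delicate points are bookkeeping — keeping the central-versus-abelian distinction straight, which is exactly what forces the use of the lower central series rather than the derived series — and tracking how the lower central series terms transform under the quotient maps $A_{k+1} \to A_k$, which requires a short secondary induction verifying that this quotient sends $[A_{k+1}]_j$ onto $[A_k]_j$. I do not expect any substantive obstacle beyond these.
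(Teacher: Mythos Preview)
Your proposal is correct and takes essentially the same approach as the paper: the paper's proof is a one-line remark that the argument of Proposition~\ref{prop:solvable} carries over verbatim once Theorem~\ref{thm:multiabel} is replaced by Theorem~\ref{thm:multicentral}, and you have correctly supplied the details of that adaptation, substituting the lower central series for the derived series and using centrality ($[A_{k+1},Q_{k+1}]=0$) in place of abelianness ($[Q_{k+1},Q_{k+1}]=0$) at the final step of the induction.
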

\begin{proof}
This is precisely the argument in Proposition~\ref{prop:solvable} where we use the the characterization of central extensions in Theorem~\ref{thm:multicentral} instead of Theorem~\ref{thm:multiabel}.
\end{proof}

We denote by $(Q,I,0)$ datum in which the action is trivial; that is, $a(f,s) \equiv 0$ for all $f \in F$, $s \in [\ar f]^{\ast}$.

\begin{theorem}\label{thm:centcohom}
Let $\mathcal V$ be a variety of $R$-modules expanded by multilinear operations $F$ and $(Q,I)$ datum in $\mathcal V$ with $I$ an abelian algebra. The set of equivalence classes of central extensions which realize the datum $(Q,I)$ in the variety $\mathcal V$ is in bijective correspondence with the abelian group $H^{2}_{\mathcal V}(Q,I,0)$. The equivalence class of the direct product $I \times Q$ corresponds to the equivalence class of the trivial 2-cocycle. 
\end{theorem}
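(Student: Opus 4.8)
The plan is to obtain Theorem~\ref{thm:centcohom} as the specialization of Theorem~\ref{thm:affinecohom} to the trivial action, using Theorem~\ref{thm:multicentral} to recognize the extensions realizing that affine datum as precisely the central ones. First I would check that $(Q,I,0)$ really is affine datum in $\mathcal V$: the algebra $I$ is abelian by hypothesis, the action terms $a(f,s)\equiv 0$ are (vacuously) unary in $I$, and the trivial action is $\mathcal V$-compatible because $I\rtimes_{0}Q$ is nothing but the direct product $I\times Q$, which lies in $\mathcal V$ since $I,Q\in\mathcal V$. With this in hand, Theorem~\ref{thm:affinecohom} applies verbatim and yields a bijection between the equivalence classes of extensions realizing $(Q,I,0)$ and the abelian group $H^{2}_{\mathcal V}(Q,I,0)$, under which the semidirect product $I\rtimes_{0}Q=I\times Q$ is sent to the class of the trivial 2-cocycle.

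The next step is to show that an extension $\pi\colon M\rightarrow Q$ with $\ker\pi\cong I$ realizes the affine datum $(Q,I,0)$ if and only if it is a central extension realizing $(Q,I)$. If $\pi$ realizes $(Q,I,0)$, then it realizes an action-trivial $\mathcal V$-compatible 2-cocycle $T$, and Theorem~\ref{thm:multicentral} applied to $[T]$ says exactly that $\ker\pi$ determines a central congruence. Conversely, if $\pi$ is a central extension realizing $(Q,I)$, then by Theorem~\ref{thm:multirep} it realizes some $\mathcal V$-compatible 2-cocycle $T$; since $\ker\pi$ is abelian and the extension is central, Theorem~\ref{thm:multicentral} forces $T$ to be action-trivial, so $\pi$ realizes $(Q,I,0)$. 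Thus the objects counted by the two sides agree, and by the remark preceding Theorem~\ref{thm:affinecohom} the group addition on $H^{2}_{\mathcal V}(Q,I,0)$ is the restriction of the addition already present on $2$-cocycles for affine datum, so no separate verification of the group structure is needed.

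The one point requiring care — and the part I expect to be the main obstacle — is the bookkeeping of equivalence: I must ensure that restricting $H^{2}_{\mathcal V}(Q,I)$ to action-trivial representatives carves out an honest union of $\sim$-classes, i.e.\ that an action-trivial $\mathcal V$-compatible 2-cocycle is never equivalent to one with nonzero action terms. This is precisely the observation recorded in the paragraph before Theorem~\ref{thm:affinecohom}: for abelian $I$ the higher-arity components (B4) of any $2$-coboundary vanish, because both the higher-arity action terms and the multilinear operations of $I$ are trivial, so a $2$-coboundary witnessed by $h\colon Q\rightarrow I$ leaves all action terms unchanged. Since the notion of equivalence of extensions in Definition~\ref{def:equivexactseq} does not mention the datum action at all, equivalence of two extensions realizing $(Q,I,0)$ as extensions of $(Q,I)$ is the same as their equivalence as extensions realizing the affine datum. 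Combining the bijection of Theorem~\ref{thm:affinecohom} with this identification gives the stated bijective correspondence between central extensions realizing $(Q,I)$ and $H^{2}_{\mathcal V}(Q,I,0)$, with $I\times Q$ corresponding to the trivial class.
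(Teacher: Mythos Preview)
Your proposal is correct and matches the paper's approach: the paper states Theorem~\ref{thm:centcohom} without proof, treating it as the immediate specialization of Theorem~\ref{thm:affinecohom} to the trivial action via the characterization of central extensions in Theorem~\ref{thm:multicentral}. Your careful verification that $(Q,I,0)$ is affine datum in $\mathcal V$ and that equivalence respects action-triviality fills in exactly the details the paper leaves implicit.
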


In light of the last theorem, when $I$ is an abelian algebra we may refer to $(Q,I,0)$ as \emph{central datum}. The next lemma gives a description of $1^{\mathrm{st}}$-cohomology for central datum.

\begin{lemma}
Let $\mathcal V$ be a variety of $R$-modules expanded by multilinear operations $F$ and $Q,I \in \mathcal V$ with $I$ an abelian algebra. Then $H^{1}(Q,I,0) = \mathrm{Der}(Q,I,0) =  \{ h \in \Hom_{R}(Q,I): h([Q,Q]) = 0 \}$. 
\end{lemma}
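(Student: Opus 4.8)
The plan is to prove the two equalities in turn, starting from the definitions. For the first equality $H^{1}(Q,I,0) = \mathrm{Der}(Q,I,0)$, I would show $\mathrm{PDer}(Q,I,0) = 0$. A principal derivation arises from a principal stabilizing automorphism $\gamma \in \mathrm{PStab}(I \rtimes_{0} Q)$, i.e.\ an element of $\mathrm{Tw}_{I,F}(I \rtimes_{0} Q) \cap \mathrm{Stab}(I \rtimes_{0} Q)$. The key observation is that since the action is trivial and $I$ is abelian, the semidirect product $I \rtimes_{0} Q$ is just the direct product $I \times Q$; a term $t(x,\vec{y})$ evaluated with $\vec{y}$ substituted from $I \times \{0\}$ gives, by Remark~\ref{remark:affineident} (or a direct computation in a direct product), a polynomial of the form $p\langle a,x\rangle = \langle a + c(\vec x), x\rangle$ where the $Q$-coordinate is untouched and $c(\vec x)$ depends only on the constants. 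The condition that $p$ be an $I$-twin of the identity forces $c(\vec x) = 0$ when the constants are chosen to recover the identity, and hence for the twin constants the discrepancy must itself vanish since the operations of $I$ on constants from $I$ that combine to give a twin of $x \mapsto x$ must be additive and telescoping; I would make this precise by noting that in a direct product, a twin of the identity acts as a translation $a \mapsto a + c_0$ by a fixed element $c_0 \in I$ independent of $x$, and having a fixed point forces $c_0 = 0$, so $p = \mathrm{id}$ and $d_\gamma = 0$. Thus $\mathrm{PDer}(Q,I,0) = 0$ and $H^1 = \mathrm{Der}$.

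For the second equality $\mathrm{Der}(Q,I,0) = \{ h \in \mathrm{Hom}_R(Q,I) : h([Q,Q]) = 0\}$, I would unwind Definition~\ref{def:30} with the action terms all zero and $h$ witnessing the \emph{null} 2-coboundary. Condition (B1) with $G_+ = 0$ says $h(x) + h(y) - h(x+y) = 0$, i.e.\ $h$ is additive; condition (B2) with $G_r = 0$ says $r \cdot h(x) - h(r\cdot x) = 0$, i.e.\ $h$ commutes with scalar multiplication; so $h \in \mathrm{Hom}_R(Q,I)$. Condition (B4) with $g(f,s) = 0$ is automatic since all action terms vanish and the multilinear operations of $I$ on any tuple with a zero in a proper subset of coordinates vanish — actually since $I$ is abelian the multilinear operations $f^I$ are trivial, so $(-1)^{1+|\ar f|-|s|} f^I(h(\vec x))_s[\vec a] = 0$ and (B4) holds. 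The remaining condition is (B3) with $G_f = 0$: $\sum_{s} (-1)^{1+|s|} a(f,s)(\vec x, h(\vec x)) + (-1)^{1+\ar f} f^I(h(\vec x)) - h(f^Q(\vec x)) = 0$, which since the action is trivial and $f^I$ is trivial reduces exactly to $h(f^Q(\vec x)) = 0$ for every $f \in F$ with $\ar f \geq 2$ and every $\vec x \in Q^{\ar f}$.

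It remains to translate the condition ``$h(f^Q(\vec x)) = 0$ for all multilinear $f$ and all $\vec x$'' into ``$h([Q,Q]) = 0$,'' using additivity and $R$-linearity of $h$. Recall from Section~\ref{section:2} that $[Q,Q]$ is the smallest ideal containing all $f^Q(\vec a)$ for $f \in F$ multilinear and $\vec a \in Q^{\ar f}$; since $h$ is an $R$-module homomorphism, $\ker h$ is a submodule of $Q$, and because $h(f^Q(\vec x)) = 0$ for all such generators, $\ker h$ contains a generating set for the submodule generated by all values of the multilinear operations. But in a multilinear module expansion, the submodule generated by the values $f^Q(\vec a)$ is already absorbing (an argument essentially as in the standard identity $f(a_1,\dots,a_n) \in$ that submodule forces $f(a_1,\dots,a_{i-1},b,a_{i+1},\dots,a_n)$ into it too, using that the ambient is $[Q,Q]$ as generated), so it equals $[Q,Q]$; hence $[Q,Q] \subseteq \ker h$. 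Conversely if $h \in \mathrm{Hom}_R(Q,I)$ with $h([Q,Q]) = 0$, then $h(f^Q(\vec x)) = 0$ for every multilinear $f$ since $f^Q(\vec x) \in [Q,Q]$, so $h \in \mathrm{Der}(Q,I,0)$. The main obstacle I anticipate is the bookkeeping around $\mathrm{PDer}(Q,I,0) = 0$: one must argue carefully that the only $I$-twins of the identity in the direct product $I \times Q$ that have a fixed point are the identity itself, which amounts to observing that such a twin is a translation by a single element of $I$ and that a translation with a fixed point is trivial; everything else is a direct unwinding of the definitions.
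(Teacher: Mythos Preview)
Your proposal is correct. For the second equality you and the paper do the same thing: unwind (B1)--(B3) with trivial action and trivial $f^{I}$ to see that $h \in \mathrm{Der}(Q,I,0)$ iff $h \in \Hom_{R}(Q,I)$ and $h(f^{Q}(\vec{x})) = 0$ for all $f \in F$, which is $h([Q,Q]) = 0$. For the first equality your route differs from the paper's. You compute directly in the direct product $I \times Q$: since $I$ is abelian every $f^{I}$ vanishes, so a term with parameters in $I \times \{0\}$ acts on the $I$-coordinate as $a \mapsto \rho_{0}(a) + c_{0}$; the twin-of-identity condition forces $\rho_{0}$ to act as the identity on $I$, so the twin is the translation $\langle a,x\rangle \mapsto \langle a + c_{0}, x\rangle$, and the fixed point kills $c_{0}$. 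The paper instead invokes the term-condition commutator: by Theorem~\ref{thm:multicentral} the congruence $\alpha_{I}$ is central in $I \times Q$, so $[1,\alpha_{I}] = 0$; for a principal stabilizing automorphism $\gamma$ with fixed point $a \in I \times \{0\}$ the equality $t(a,\vec{c}) = a = t(a,\vec{d})$ together with $C(1,\alpha_{I};0)$ yields $\gamma(x) = t(x,\vec{c}) = t(x,\vec{d}) = x$ for every $x$. Your argument is more elementary and self-contained (it avoids the commutator formalism entirely), while the paper's is a one-line consequence of centrality once that machinery is available; both conclude $\mathrm{PStab}(I \times Q) = 0$.
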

\begin{proof} By Proposition~\ref{thm:multicentral}, the copy of $I$ in an extension realizing the datum determines a central congruence. Let $\gamma$ be a principal stabilizing automorphism of $I \times Q$. Then there is term $t(x,\vec{y})$ and $\vec{c},\vec{d} \in (I \times \{0\})^{k}$ such that $\gamma(x)=t(x,\vec{c})$ and $x=t(x,\vec{d})$. Since $\gamma$ is a stabilizing automorphism, it restricts to the identity on $I \times \{ 0 \}$ and so it has fixed-points, say $a \in I \times \{ 0 \}$. Then for any $x \in I \times Q$, the matrix
\[
\begin{bmatrix} a & \gamma(x) \\ a & x \end{bmatrix} = \begin{bmatrix} \gamma(a) & \gamma(x) \\ a & x \end{bmatrix} = \begin{bmatrix} t(a,\vec{c}) & t(x,\vec{c}) \\ t(a,\vec{d}) & t(x,\vec{d}) \end{bmatrix} \in M(\alpha_{I \times \{0 \}},1)
\]
implies $(\gamma(x),x) \in [1, \alpha_{I \times \{0\}}] = 0$; thus, $0 = \mathrm{PStab}(I \times Q) \approx \mathrm{PDer}(Q,I,0)$ and so $H^{1}(Q,I,0) = \mathrm{Der}(Q,I,0)$.

Since the action terms are all trivial, it follows from (B1) - (B3) that $h \in \mathrm{Der}(Q,I,0)$ if and only if $h$ is an $R$-module homomorphism from $Q$ to $I$ such that $h(f^{Q}(\vec{x})) = 0$ for all $f \in F$ and $\vec{x} \in Q^{\ar f}$; that is, $\mathrm{Der}(Q,I,0) = \{ h \in \Hom_{R}(Q,I): h([Q,Q]) = 0 \}$.
\end{proof}

It is possible to give a more concrete form to principal derivations.

\begin{remark}\label{rem:principal}
Let $(Q,I,\ast)$ be datum and $\gamma$ a nontrivial principal stabilizing automorphism. There is a term $t(x,\vec{y})$ and $\vec{c},\vec{d} \in I^{n}$ such that  
\begin{align*}
\left\langle a - d_{\gamma}(x), x \right\rangle = \gamma(a,x) &= F_{t} \left( \left\langle a,x \right\rangle, \left\langle c_{1},0 \right\rangle, \ldots, \left\langle c_{n},0 \right\rangle \right) = \left\langle t^{I}(a,\vec{c}) + t^{\ast,T}((a,\vec{c}),(x,\vec{0})) , t^{Q}(x,\vec{0})\right\rangle \\
\left\langle a, x \right\rangle &= F_{t} \left( \left\langle a,x \right\rangle, \left\langle d_{1},0 \right\rangle, \ldots, \left\langle d_{n},0 \right\rangle\right) = \left\langle t^{I}(a,\vec{d}) + t^{\ast,T}((a,\vec{d}),(x,\vec{0})) , t^{Q}(x,\vec{0})\right\rangle
\end{align*}
where we have used the representation from Lemma~\ref{lem:semident} in the semidirect product $I \rtimes_{T} Q$. Taking $x=0$ and using $d_{\gamma}(0)=0$, we conclude that $t(a,\vec{c}) = t(a,\vec{d})=a$. Then from the above we see that 
\[
- d_{\gamma}(x) = t^{\ast,T}((0,\vec{c}),(x,\vec{0})) \quad \quad \text{ and } \quad \quad 0 = t^{\ast,T}((0,\vec{d}),(x,\vec{0}))
\]
since the derivation $d_{\gamma}$ is independent of $a$. Note that $t^{\ast,T}$ is a sum of iterated action terms and so cannot depend on $0 \in Q$. In this way, we can just write $d_{\gamma}(x) = s^{\ast,T}(\vec{c},x)$ which is a sum of iterated action terms which depend only on $x \in Q$ and evaluated on the constants $\vec{c} \in I^{n}$. Then $\left\langle d_{\gamma}(x), 0 \right\rangle = F_{s} \left(\left\langle 0,x \right\rangle, \left\langle c_{1},0 \right\rangle ,\ldots, \left\langle c_{n}, \right\rangle \right)$.
\end{remark}


\section{Ideal-Preserving Derivations}\label{section:4}
\vspace{0.3cm}

For the varieties under consideration, the notion of a derivation (as different from a cohomological derivation) is meaningful. For affine datum, we prove a Wells-type theorem which characterizes the Lie algebra of derivations which preserve a fixed ideal; more formally, for $I \triangleleft M$ which realizes a group-trivial extension, the Lie algebra of derivations of $M$ which fix $I$ is a Lie algebra extension of compatible pairs of derivations of $I$ and $M/I$ by the cohomological derivations of the datum $\mathrm{Der}(Q,I,\ast)$. This is very similar to the theorem of Wells \cite{wells} for automorphisms fixing a normal subgroup and the analogous result for associative conformal algebras in \cite{conform2}.

\begin{theorem}\label{thm:derivations}
Let $\mathcal V$ be a variety of modules expanded by multilinear operations and $M \in \mathcal V$ Let $\pi: M \rightarrow Q$ be a group-trivial extension realizing affine datum $(Q,I,\ast)$. If $T$ is the 2-cocycle associated to the extension, then 
\begin{align}\label{eqn:derivext}
0 \longrightarrow \mathrm{Der}(Q,I,\ast) \longrightarrow \mathrm{Der}_{I} M \stackrel{\psi}{\longrightarrow} c(I,Q,\ast) \stackrel{W_{T}}{\longrightarrow} H^{2}_{_{R} \mathcal M_{F}}(Q,I)
\end{align}
is an exact sequence of Lie algebras.
\end{theorem}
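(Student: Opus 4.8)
The plan is to exploit the hypothesis that the extension is group-trivial, i.e.\ $T_{+}\equiv T_{r}\equiv 0$, so that (via Theorem~\ref{thm:multirep} and Remark~\ref{remark:affineident}) we may take $M=I\rtimes_{T}Q$ to have underlying $R$-module $I\oplus Q$, with the multilinear operations
\[
F_{f}\!\left(\left\langle a_{1},x_{1}\right\rangle,\ldots,\left\langle a_{n},x_{n}\right\rangle\right)=\left\langle \sum_{i\in[n]}a(f,i)(\vec{x},\vec{a})+T_{f}(\vec{x}),\ f^{Q}(\vec{x})\right\rangle
\]
of Eq~\eqref{eqn:20} (the action terms unary in $I$, the operations of $I$ trivial, by affineness). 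Note $\mathrm{Der}_{I}M$ is a Lie subalgebra of $\mathrm{Der}\,M$ under the commutator bracket. Relative to $M=I\oplus Q$, every $R$-linear $D$ with $D(I)\subseteq I$ has the form $D\left\langle a,x\right\rangle=\left\langle \delta(a)+\eta(x),\,d(x)\right\rangle$ for $R$-linear $\delta\colon I\to I$, $d\colon Q\to Q$, $\eta\colon Q\to I$ with $\eta(0)=0$, and (since $T_{+}=T_{r}=0$) such a $D$ is automatically an $R$-module endomorphism. First I would expand the Leibniz identity $D\!\left(F_{f}(\vec{m})\right)=\sum_{j}F_{f}(m_{1},\ldots,Dm_{j},\ldots,m_{n})$ for each $f$: the second coordinate forces $d\in\mathrm{Der}\,Q$; the part of the first coordinate multilinear in $\vec{a}$ forces $(\delta,d)$ to be a \emph{compatible pair}, which is exactly the defining condition of $c(I,Q,\ast)$ (that $\delta$ and $d$ jointly respect each action term $a(f,i)$); and the $\vec{a}$-free part of the first coordinate reduces to the single family of identities
\[
\delta\!\left(T_{f}(\vec{x})\right)-\sum_{j\in[n]}T_{f}(x_{1},\ldots,d(x_{j}),\ldots,x_{n})=\sum_{i\in[n]}a(f,i)(\vec{x},\eta(\vec{x}))-\eta\!\left(f^{Q}(\vec{x})\right),
\]
whose right-hand side is precisely the factor-set component of the $2$-coboundary witnessed by $\eta$ in the sense of (B3) of Definition~\ref{def:30} (affineness makes the (B4) action-term components vanish). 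Carrying out this three-way separation of the Leibniz identity is the technical core and the step I expect to require the most care.

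Granting that, set $\psi(D):=(D|_{I},\overline{D})=(\delta,d)$, with $\overline{D}$ the induced derivation on $M/I\approx Q$; by the above $\psi$ lands in $c(I,Q,\ast)$ and, since restriction to $I$ and passage to the quotient each commute with commutators, $\psi$ is a homomorphism of Lie algebras. Define
\[
W_{T}(\delta,d):=\left[\widehat{T}^{(\delta,d)}\right]\in H^{2}_{_{R}\mathcal M_{F}}(Q,I),
\]
where $\widehat{T}^{(\delta,d)}$ is the action-trivial $2$-cochain with $\widehat{T}_{+}=\widehat{T}_{r}=0$ and factor sets $\widehat{T}_{f}(\vec{x}):=\delta(T_{f}(\vec{x}))-\sum_{j}T_{f}(x_{1},\ldots,d(x_{j}),\ldots,x_{n})$, i.e.\ the left-hand side of the identity displayed above. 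I would check that $\widehat{T}^{(\delta,d)}$ satisfies (T1)--(T4) (immediate, using $d(0)=0$ and (T3)) and the multilinearity $2$-cocycle identities; here group-triviality is used again, as it makes the chosen section an $R$-module homomorphism and hence makes each $T_{f}$ and each action term $R$-multilinear, so that each $\widehat{T}_{f}$ is $R$-multilinear — and for an action-trivial, group-trivial cochain the multilinearity $2$-cocycle identities amount to exactly that. One then checks that replacing the module section changes $\widehat{T}^{(\delta,d)}$ by a $2$-coboundary (this step uses the compatibility of $(\delta,d)$), so $W_{T}$ is a well-defined $R$-linear map on $c(I,Q,\ast)$ whose value depends only on the class of $T$.

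For exactness: the map $\mathrm{Der}(Q,I,\ast)\to\mathrm{Der}_{I}M$ sends a $1$-cocycle $h$ to $D_{h}\left\langle a,x\right\rangle:=\left\langle h(x),0\right\rangle$, which by the displayed identity with $\delta=d=0$ is a derivation of $M$ exactly because $h$ is a $1$-cocycle; this map is injective, its image is $\ker\psi$ (see below) and hence an ideal, and it is abelian since $D_{h}D_{h'}=0$. Exactness at $\mathrm{Der}_{I}M$: $\psi(D)=0$ says $\delta=d=0$, so $D=D_{\eta}$ with $\eta$ satisfying the displayed identity, i.e.\ $\eta\in\mathrm{Der}(Q,I,\ast)$; thus $\ker\psi$ is the image of $\mathrm{Der}(Q,I,\ast)$. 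Exactness at $c(I,Q,\ast)$: $W_{T}(\delta,d)=0$ means $\widehat{T}^{(\delta,d)}$ is a $2$-coboundary, and because the datum is affine this forces a witness $\eta$ that is $R$-linear with $\eta(0)=0$ and for which the displayed identity holds; then $D\left\langle a,x\right\rangle:=\left\langle \delta(a)+\eta(x),d(x)\right\rangle$ is an $I$-preserving derivation of $M$ — its Leibniz identity being the conjunction of $d\in\mathrm{Der}\,Q$, compatibility of $(\delta,d)$, and the displayed identity — with $\psi(D)=(\delta,d)$; conversely, for $(\delta,d)=\psi(D)$ the off-diagonal block $\eta$ of $D$ witnesses the displayed identity, so $W_{T}(\delta,d)=0$. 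Hence $\im\psi=\ker W_{T}$.

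Finally, $c(I,Q,\ast)$ is closed under the componentwise bracket $([\delta_{1},\delta_{2}],[d_{1},d_{2}])$ (a short computation from the compatibility relations), $H^{2}_{_{R}\mathcal M_{F}}(Q,I)$ carries the trivial bracket, and the remaining claim that $W_{T}$ is a homomorphism of Lie algebras amounts to showing $W_{T}$ annihilates all brackets, for which one exhibits $\widehat{T}^{([\delta_{1},\delta_{2}],[d_{1},d_{2}])}$ as a $2$-coboundary by the same cocycle manipulation used for the well-definedness of $W_{T}$. To summarize, the crux is the opening step — splitting the Leibniz identity into the derivation condition on $Q$, the compatibility condition, and the displayed $2$-cocycle-type identity — together with the verification that $\widehat{T}^{(\delta,d)}$ is a genuine $_{R}\mathcal M_{F}$-compatible $2$-cocycle with section-independent cohomology class.
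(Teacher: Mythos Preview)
Your overall plan mirrors the paper's: identify $M$ with $I\rtimes_{T}Q$, split the Leibniz identity for an $I$-preserving derivation into (i) $d\in\mathrm{Der}\,Q$, (ii) compatibility of $(\delta,d)$ with the action, and (iii) a coboundary-type identity witnessed by the off-diagonal piece $\eta$, then read off exactness. The paper organizes the same computation through Lemma~\ref{lem:compderiv} (conditions (C1)--(C3)) and Lemma~\ref{lem:derivdecon}.

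There is, however, a concrete gap. You interpret ``group-trivial'' as $T_{+}\equiv T_{r}\equiv 0$, but the paper's definition only yields $T_{+}=0$: the abelian-group reduct splits, not necessarily the $R$-module reduct. This matters at several points. First, your claim that any additive $D$ with $D(I)\subseteq I$ ``is automatically an $R$-module endomorphism'' fails when $T_{r}\neq 0$: writing $D\langle a,x\rangle=\langle\delta(a)+\eta(x),d(x)\rangle$ with $\delta,d,\eta$ additive, $R$-linearity of $D$ forces $\delta$ and $d$ to be $R$-linear but imposes the extra identity
\[
\delta\circ T_{r}(x)-T_{r}(d(x))=r\cdot\eta(x)-\eta(r\cdot x),
\]
which is exactly the paper's condition (C1) paired with the $(B2)$-component of the coboundary. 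Second, your $\widehat{T}^{(\delta,d)}$ omits the component $T_{r}^{(\delta,d)}(x)=\delta(T_{r}(x))-T_{r}(d(x))$; in the paper this is part of $W_{T}$ and must be matched against the $(B2)$-piece of a $2$-coboundary to obtain exactness at $c(I,Q,\ast)$ (see the module-homomorphism calculation in the paper's proof). Third, $\eta$ need not be $R$-linear; the paper only uses that the witness $h$ is additive (a consequence of $T_{+}=0$), while $r$-compatibility is handled through the displayed identity above. Finally, calling $\widehat{T}^{(\delta,d)}$ ``action-trivial'' conflicts with your own comparison: the right-hand side of your displayed identity is the $(B3)$-coboundary for the action $\ast$, so the class $[\widehat{T}^{(\delta,d)}]$ must be taken in cohomology with action terms $\ast$, as the paper does. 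Once you reinstate the $T_{r}$-components and keep the action $\ast$ throughout, your argument coincides with the paper's.
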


We begin by developing the items in the statement of the theorem. Let $T$ be a 2-cocycle which determines the extension $\pi: M \rightarrow Q$. The extension is \emph{group-trivial} if $T \sim T'$ with $T_{+}'=0$; consequently, the abelian group reduct of a group-trivial extension factors as a direct product of the datum groups.

\begin{definition}
Let $\mathcal V$ be a variety of multilinear expansions of $R$-modules and $M \in \mathcal V$. A map $h: M \rightarrow M$ is a \emph{derivation} if it is an $R$-module homomorphism such that
\begin{align*}
h \left( f^{M}(x_{1},\ldots,x_{n}) \right) &= f^{M}(h(x_{1}),x_{2},\ldots,x_{n}) + f^{M}(x_{1},h(x_{2}),\ldots,x_{n}) + \cdots + f^{M}(x_{1},x_{2},\ldots, h(x_{n})) \\
&= \sum_{i=1}^{n} f^{M}(\vec{x})_{i}[h(\vec{x})] 
\end{align*}
for all multilinear operations $f \in F$ with $n= \ar f$.
\end{definition}

The set of derivations of $M$ is denoted by $\mathrm{Der} \, M$ and is an $R$-module under the module operations induced by $M$; consequently, it is a Lie algebra over $R$ by the standard bracket $[\alpha,\beta] := \alpha \circ \beta - \beta \circ \alpha$ through composition.

Given $(\alpha,\beta) \in \mathrm{Der} \, I \times \mathrm{Der} \, Q$, we define a map $\phi_{(\alpha,\beta)} : I \times Q \rightarrow I \times Q$ by the rule $\phi_{(\alpha,\beta)} \left\langle a,x \right\rangle:= \left\langle \alpha(a),\beta (x) \right\rangle$. Let $T$ be a linear group-trivial 2-cocycle. We can query what conditions must be true in order for $\phi_{(\alpha,\beta)}$ to be a derivation of the semidirect product $I \rtimes_{T} Q$.

\begin{lemma}\label{lem:compderiv}
Let $\mathcal V$ be a variety of multilinear expansions of $R$-modules which contains datum $(Q,I)$. Let $T$ be group-trivial linear 2-cocycle and $(\alpha,\beta) \in \mathrm{Der} \, I \times \mathrm{Der} \, Q$. Then $\phi_{(\alpha,\beta)} \in \mathrm{Der} \, \left( I \rtimes_{T} Q \right)$ if and only if
\begin{enumerate}

	\item[(C1)] $\alpha \circ T_{r}(x) = T_{r}(\beta(x))$ for all $r \in R$;

	\item[(C2)] for each $f \in F$ with $n= \ar f$ and $k \in [n]$, 
\begin{align*}
\alpha \circ a(f,k)(\vec{x},\vec{a}) = \sum_{i \in [n]} a(f,k)\left( (x_{1},\ldots,\beta(x_{i}),\ldots,x_{n}), (a_{1},\ldots,\alpha(a_{i}),\ldots,a_{n}) \right);
\end{align*}
	
	\item[(C3)] for each $f \in F$ with $n= \ar f$, $\alpha \circ T_{f}(\vec{x}) = \sum_{i \in [n]} T_{f}(x_{1},\ldots,\beta(x_{i}),\ldots,x_{n})$.

\end{enumerate}
\end{lemma}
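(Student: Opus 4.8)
The plan is to test the two requirements in the definition of a derivation of $N := I \rtimes_{T} Q$ --- that it be an $R$-module endomorphism, and that it satisfy the Leibniz identity for each multilinear $f \in F$ --- directly against the operation formulas of $N$, and to read off (C1)--(C3) as precisely the resulting obstructions. Write $\phi := \phi_{(\alpha,\beta)}$. Since $T$ is group-trivial we fix the cocycle representative with $T_{+}=0$, so that addition in $N$ is componentwise; then $\phi$ is automatically additive because $\alpha \in \Hom_{R}(I,I)$ and $\beta \in \Hom_{R}(Q,Q)$, and the only constraint coming from ``$R$-module endomorphism'' concerns scalar multiplication. Expanding $\phi(r\cdot\langle a,x\rangle)$ and $r\cdot\phi\langle a,x\rangle$ with the rule $r\cdot\langle a,x\rangle = \langle r\cdot a + T_{r}(x),\, r\cdot x\rangle$ and cancelling, this collapses to $\alpha(T_{r}(x)) = T_{r}(\beta(x))$, which is (C1).

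Next I would expand both sides of the Leibniz identity $\phi(F_{f}(\vec u)) = \sum_{i=1}^{n} F_{f}(\vec u)_{i}[\phi(\vec u)]$, where $n = \ar f$ and $\vec u = (\langle a_{1},x_{1}\rangle,\ldots,\langle a_{n},x_{n}\rangle)$. Because $T$ is linear, only the unary action terms survive, so the first coordinate of $F_{f}(\vec u)$ is $f^{I}(\vec a) + \sum_{k=1}^{n} a(f,k)(\vec x,\vec a) + T_{f}(\vec x)$ and the second is $f^{Q}(\vec x)$. Applying $\phi$ on the left and summing on the right, the $Q$-coordinates agree with no condition imposed, since $\beta \in \mathrm{Der}\, Q$ gives $\beta(f^{Q}(\vec x)) = \sum_{i} f^{Q}(x_{1},\ldots,\beta(x_{i}),\ldots,x_{n})$, and on the $I$-coordinate the pieces built only from $f^{I}$ cancel because $\alpha \in \mathrm{Der}\, I$. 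What remains on the $I$-coordinate separates into the $T_{f}$-part, which is (C3) verbatim, and the action-term identity
\[
\sum_{k=1}^{n}\alpha\bigl(a(f,k)(\vec x,\vec a)\bigr) = \sum_{k=1}^{n}\Bigl( a(f,k)\bigl(\vec x,(a_{1},\ldots,\alpha(a_{k}),\ldots,a_{n})\bigr) + \sum_{i\neq k} a(f,k)\bigl((x_{1},\ldots,\beta(x_{i}),\ldots,x_{n}),\vec a\bigr) \Bigr),
\]
required to hold for all $\vec a \in I^{n}$ and $\vec x \in Q^{n}$.

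The one step needing an actual argument rather than bookkeeping is that this last identity is equivalent to its termwise version (C2). I would deduce this as follows: for fixed $k$, the $k$-th summand on each side depends on $\vec a$ only through $a_{k}$ --- forced by the support clauses in (T4), which say $a(f,k)$ ignores the $k$-th coordinate of its $Q$-argument and the non-$k$ coordinates of its $I$-argument --- and vanishes when $a_{k}=0$ by multilinearity of $a(f,k)(\vec x,\cdot)$ in the $k$-th slot together with $\alpha(0)=0$. Hence setting $a_{j}=0$ for all $j\neq k$ isolates the $k$-th terms, so the displayed identity holds identically in $\vec a,\vec x$ iff it holds summand-by-summand in $k$; and, within the $k$-th right-hand summand, restoring the (ignored) $\beta$-substitution at coordinate $k$ in the first term and the (ignored) $\alpha$-substitutions at coordinates $i\neq k$ in the others rewrites it as $\sum_{i=1}^{n} a(f,k)\bigl((x_{1},\ldots,\beta(x_{i}),\ldots,x_{n}),(a_{1},\ldots,\alpha(a_{i}),\ldots,a_{n})\bigr)$, which is exactly the right side of (C2). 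Running the computation backwards proves the converse.

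The main obstacle I expect is purely the substitution bookkeeping inside the action terms: in $a(f,k)$ one substitutes simultaneously at a coordinate $i$ into the $Q$-slot via $\beta$ and into the $I$-slot via $\alpha$, and one must consistently track which of the two substitutions drops out according to whether $i=k$ or $i\neq k$, uniformly over all $k$. No idea beyond the multilinearity already present in the hypotheses is needed; the passage from the summed identity to the termwise (C2) via the specialization $a_{j}=0$ ($j\neq k$) is the only point where one genuinely argues rather than expands.
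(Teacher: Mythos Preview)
Your proposal is correct and follows essentially the same approach as the paper: expand the derivation identity for $\phi_{(\alpha,\beta)}$ in $I\rtimes_{T}Q$, cancel the $f^{I}$ and $f^{Q}$ contributions using that $\alpha,\beta$ are derivations, and then specialize coordinates to zero to disentangle (C1)--(C3). The only tactical difference is in which variables are set to zero: the paper isolates (C2) by putting $x_{k}=0$ (using the (T4) vanishing of $a(f,j)$ and $T_{f}$ when a $Q$-slot outside $j$ is zero) and then subtracts to obtain (C3), whereas you first peel off (C3) by the implicit specialization $\vec a=0$ and then isolate the $k$-th action identity by setting $a_{j}=0$ for $j\neq k$; both work for the same reason, namely the support clauses in (T4) for unary action terms.
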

\begin{proof}
Since $T$ is group-trivial and linear, the action terms are unary in $I$ and $T_{+} = 0$. It is then easy to see that $\phi_{(\alpha,\beta)}$ is an abelian group homomorphism. We first see how to derive conditions (C1)-(C3). In order for $\phi_{(\alpha,\beta)}$ to a be module homomorphism we must have equality between
\begin{align*}
\phi_{(\alpha,\beta)} \left( r \cdot \left\langle a,x\right\rangle \right) = \phi_{(\alpha,\beta)} \left( \left\langle ra + T_{r}(x), rx \right\rangle\right) = \left\langle \alpha(rx) + \alpha \circ T_{r}(x), \beta(rx) \right\rangle
\end{align*}
and
\begin{align*}
r \cdot \phi_{(\alpha,\beta)} \left(\left\langle a, x \right\rangle \right) = r \cdot \left\langle \alpha(a), \beta(x) \right\rangle = \left\langle r \alpha(x) + T_{r}(\beta(x)), r\beta(x) \right\rangle.
\end{align*}
Since $\alpha$ and $\beta$ are module homomorphisms, we must have $\alpha \circ T_{r}(x) = T_{r}(\beta(x))$ for all $r \in R$. Calculating for multilinear $f \in F$ with $n=\ar f$, 
\begin{align*}
\phi_{(\alpha,\beta)} \circ F_{f}\left( \left\langle a_{1}, x_{1} \right\rangle,\ldots, \left\langle a_{1}, x_{1} \right\rangle \right) &= \phi_{(\alpha,\beta)} \circ \left\langle f^{I}(\vec{a}) + \sum_{i \in [n]} a(f,i)(\vec{x},\vec{a}) + T_{f}(\vec{x}) , f^{Q}(\vec{x}) \right\rangle  \\
&= \left\langle \alpha \circ f^{I}(\vec{a}) + \sum_{i \in [n]} \alpha \circ  a(f,i)(\vec{x},\vec{a}) + \alpha \circ  T_{f}(\vec{x}), \beta \circ f^{Q}(\vec{x}) \right\rangle  
\end{align*}
and
\begin{align*}
\sum_{i \in [n]} &F_{f} \left( \left\langle a_{1},x_{1} \right\rangle,\ldots,\left\langle \alpha(a_{i}),\beta(x_{i}) \right\rangle,\ldots,\left\langle a_{n},x_{n} \right\rangle  \right) \\
&= \sum_{i \in [n]} \left\langle f^{I}(\vec{a})_{i}[\alpha(\vec{a})] + \sum_{j \in [n]} a(f,j) \left( (\vec{x})_{i} [\beta(\vec{x})], (\vec{a})_{i} [\alpha(\vec{a})] \right) + T_{f}(\vec{x})_{i} [\beta(\vec{x})], f^{Q}(\vec{x})_{i} [\beta(\vec{x})] \right\rangle \\
&= \left\langle \sum_{i \in [n]} f^{I}(\vec{a})_{i}[\alpha(\vec{a})] + \sum_{i \in [n]} \sum_{j \in [n]} a(f,j) \left( (\vec{x})_{i} [\beta(\vec{x})], (\vec{a})_{i} [\alpha(\vec{a})] \right) + \sum_{i \in [n]} T_{f}(\vec{x})_{i} [\beta(\vec{x})], \sum_{i \in [n]} f^{Q}(\vec{x})_{i} [\beta(\vec{x})] \right\rangle 
\end{align*}
using the fact that $T_{+}=0$. Let us assume $\phi_{(\alpha,\beta)}$ is a derivation which produces the equality
\[
\phi_{(\alpha,\beta)} \circ F_{f}\left( \left\langle a_{1}, x_{1} \right\rangle,\ldots, \left\langle a_{1}, x_{1} \right\rangle \right) = \sum_{i \in [n]} F_{f} \left( \left\langle a_{1},x_{1} \right\rangle,\ldots,\left\langle \alpha(a_{i}),\beta(x_{i}) \right\rangle,\ldots,\left\langle a_{n},x_{n} \right\rangle  \right).
\] 
Since $\alpha$ and $\beta$ are derivations, the above calculations yield
\begin{align}\label{eqn:48}
\sum_{i \in [n]} \alpha \circ  a(f,i)(\vec{x},\vec{a}) + \alpha \circ  T_{f}(\vec{x}) = \sum_{i \in [n]} \sum_{j \in [n]} a(f,j) \left( (\vec{x})_{i} [\beta(\vec{x})], (\vec{a})_{i} [\alpha(\vec{a})] \right) + \sum_{i \in [n]} T_{f}(\vec{x})_{i} [\beta(\vec{x})].
\end{align}
Suppose for each $k \in [n]$ we set $x_{k}=0$. Then $a(f,i)(\vec{x},\vec{a}) = 0$ for $i \neq k$ and $T_{f}(\vec{x}) = 0$. We also see that each $T_{f}(\vec{x})_{i} [\beta(\vec{x})] = 0$ since either $\beta(x_{k})=0$ or $x_{k}=0$ appears in the evaluated input of $T_{f}(\vec{x})_{i} [\beta(\vec{x})]$. Similarly, for $j \neq k$ either $x_{k}=0$ is an evaluated input in the action when $i \neq k$ or $\beta(x_{k})=0$ is an evaluated input in the action when $i=k$; thus, $a(f,j) \left( (\vec{x})_{i} [\beta(\vec{x})], (\vec{a})_{i} [\alpha(\vec{a})] \right) = 0$ for $j \neq k$. Then Eqn.(\ref{eqn:48}) implies 
\[
\alpha \circ a(f,k)(\vec{x},\vec{a}) = \sum_{i \in [n]} a(f,k) \left( (\vec{x})_{i} [\beta(\vec{x})], (\vec{a})_{i} [\alpha(\vec{a})] \right)
\]
which confirms condition (C2). Subtracting this from Eqn.(\ref{eqn:48}) yields 
\[
\alpha \circ T_{f}(\vec{x}) = \sum_{i \in [n]} T_{f}(\vec{x})_{i} [\beta(\vec{x})]
\] 
which confirms condition (C3).

From the above, it is straightforward to confirm that conditions (C1)-(C3) guarantee $\phi_{(\alpha,\beta)}$ is a derivation of $I \rtimes_{T} Q$.
\end{proof}

Since we are interested in reconstructing the derivations of an extension which realizes affine datum, note condition (C2) in Lemma~\ref{lem:compderiv} only depends on the datum since the action terms are incorporated into the datum.

\begin{definition}
Let $\mathcal V$ be a variety of multilinear expansions of $R$-modules containing affine datum $(Q,I,\ast)$. The set of \emph{compatible derivations} $c(I,Q,\ast)$ consists of all pairs $(\alpha,\beta) \in \mathrm{Der} \, I \times \mathrm{Der} \, Q$ which satisfy for each $f \in F$ with $n= \ar f$ and $k \in [n]$, 
\begin{align*}
\alpha \circ a(f,k)(\vec{x},\vec{a}) = \sum_{i \in [n]} a(f,k)\left( (x_{1},\ldots,\beta(x_{i}),\ldots,x_{n}), (a_{1},\ldots,\alpha(a_{i}),\ldots,a_{n}) \right);
\end{align*}
\end{definition}

Given $(\alpha,\beta), (\sigma,\kappa) \in c(I,Q,\ast)$, we can take $T$ to be trivial except for the action terms and observe that Lemma~\ref{lem:compderiv} shows $\phi_{(\alpha,\beta)}$ and $\phi_{(\sigma,\kappa)}$ are both derivations of $I \times_{T} Q$. Then the standard bracket $[\phi_{(\alpha,\beta)},\phi_{(\sigma,\kappa)}]$ will again be a derivation of $I \rtimes_{T} Q$ which must satisfy (C2) by Lemma~\ref{lem:compderiv}. This shows $c(I,Q,\ast)$ is a Lie subalgebra of $\mathrm{Der} \, I \times \mathrm{Der} \, Q$.

Fix an extension $\pi: M \rightarrow Q$ with $I=\ker \pi$. For any lifting $l:Q \rightarrow M$ of $\pi$, we have 
\begin{align}\label{eqn:47}
\pi \circ l = \id_{Q} \quad \quad \quad \text{and} \quad \quad \quad a - l \circ \pi(a) \in I \quad \quad (a \in M).
\end{align}
For any $\phi \in \mathrm{Der}_{I} \, M$, define $\phi_{l} := \pi \circ \phi \circ l$. If $l': Q \rightarrow M$ is another lifting for $\pi$, then $l(x) - l'(x) \in I$ for all $x \in Q$ which implies $\phi \circ l(x) - \phi \circ l'(x) \in I$ because $\phi \in \mathrm{Der}_{I} \, M$. It follows that $\phi_{l} = \phi_{l'}$ and so the value of $\phi_{l}$ is independent of the lifting. We can also see that $\phi_{l}$ is a derivation of $Q$. For a multilinear operation $f \in F$ with $n = \ar f$, if we substitute $f^{M}(l(\vec{x}))$ for $a$ in Eq.~\ref{eqn:47} and apply $\phi$, then $I \ni \phi \left( f^{M}(l(\vec{x})) \right) - \phi \left( l(f^{Q}(\vec{x})) \right)$. Then applying $\pi$ we see that
\begin{align*}
0 = \pi \circ \phi \left( f^{M}(l(\vec{x})) \right)  - \phi_{l}(f^{Q}(\vec{x})) &= \pi \left( \sum_{i \in [n]} f^{M}(l(\vec{x}))_{i} [ \phi(l(\vec{x})) ] \right) - \phi_{l}(f^{Q}(\vec{x})) \\
&= \sum_{i \in [n]} f^{Q}(\vec{x})_{i}[\phi_{l}(\vec{x})] - \phi_{l}(f^{Q}(\vec{x})).
\end{align*}
A similar argument shows $\phi_{l}$ is an $R$-module homomorphism; therefore, $\phi_{l} \in \mathrm{Der} \, Q$. If $\phi \in \mathrm{Der}_{I} \, M$, then clearly the restriction $\phi|_{I} \in \mathrm{Der} \, I$.

\begin{lemma}\label{lem:derivdecon}
Let $\mathcal V$ be a variety of multilinear expansions of $R$-modules and $M \in \mathcal V$. If $\pi: M \rightarrow Q$ is an extension realizing affine datum $(Q,I,\ast)$ and $l: Q \rightarrow M$ is a lifting of $\pi$, then $\psi(\phi) := (\phi|_{I},\phi_l)$ defines a Lie algebra homomorphism $\psi: \mathrm{Der}_{I} \, M \rightarrow c(I,Q,\ast)$.
\end{lemma}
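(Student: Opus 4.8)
The plan is to verify the two assertions of the lemma in order: first that $\psi$ is well-defined with image in $c(I,Q,\ast)$, and second that it respects the Lie bracket. The well-definedness of the two components $\phi \mapsto \phi|_I$ and $\phi \mapsto \phi_l$ was essentially established in the discussion preceding the lemma: for $\phi \in \mathrm{Der}_I M$ the restriction $\phi|_I$ maps $I$ to $I$ (since $\phi$ preserves $I$) and is a derivation of $I$ because $\phi$ is, while $\phi_l = \pi \circ \phi \circ l$ is a derivation of $Q$ independent of the choice of lifting $l$ by the argument with Eq.~\eqref{eqn:47}. So the only genuinely new point for the first assertion is that the pair $(\phi|_I, \phi_l)$ satisfies the compatibility condition (C2) defining $c(I,Q,\ast)$.

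To check (C2), I would work through the realization identity (R4), namely $i \circ a(f,k)(\vec{x},\vec{a}) = f^M(l(\vec{x}))_k[\vec{a}]$ (identifying $I$ with $\ker\pi$ via $i$, and noting that since the datum is affine the action terms are unary in $I$, so only singleton $s = \{k\}$ matters). Apply the derivation $\phi$ to both sides. On the left, $\phi$ restricted to $I$ is $\phi|_I$, giving $\phi|_I \circ a(f,k)(\vec{x},\vec{a})$. On the right, expand $\phi\big(f^M(l(\vec{x}))_k[\vec{a}]\big)$ using the Leibniz rule for the $n$-ary multilinear operation $f^M$: one gets a sum over the $n$ coordinates of $f^M$ evaluated with $\phi$ inserted in one slot. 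In the slots carrying $l(x_i)$, one uses $\phi(l(x_i)) = l(\phi_l(x_i)) + (\text{element of } I)$ — more precisely $\pi(\phi(l(x_i))) = \phi_l(x_i)$ together with $\phi(l(x_i)) - l(\phi_l(x_i)) \in I$ — and in the slot carrying $a_k \in I$ one gets $\phi|_I(a_k)$. Re-reading the resulting sum back through (R4) and projecting appropriately yields exactly
\[
\phi|_I \circ a(f,k)(\vec{x},\vec{a}) = \sum_{i \in [n]} a(f,k)\big((x_1,\ldots,\phi_l(x_i),\ldots,x_n),(a_1,\ldots,\phi|_I(a_i),\ldots,a_n)\big),
\]
which is (C2) with $\alpha = \phi|_I$, $\beta = \phi_l$. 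The bookkeeping here — tracking which summands land in $I$ versus which descend to $Q$, and that the ``error terms'' $\phi(l(x_i)) - l(\phi_l(x_i)) \in I$ fed into an action term reproduce action terms by multilinearity — is the step I expect to be the main obstacle, though it is morally the same computation already carried out in Lemma~\ref{lem:compderiv} and Lemma~\ref{lem:2coboundmulti}; alternatively one may invoke Lemma~\ref{lem:compderiv} directly by transporting $\phi$ across the isomorphism $\psi : M \approx I \rtimes_T Q$ of Theorem~\ref{thm:multirep}, under which $\phi$ becomes a derivation of $I \rtimes_T Q$ whose $(\phi|_I,\phi_l)$-block must satisfy (C2).

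Finally, for the Lie algebra homomorphism property: both source and target carry the commutator bracket $[\alpha,\beta] = \alpha\circ\beta - \beta\circ\alpha$, and $\psi$ is clearly $R$-linear (restriction and $\pi\circ(-)\circ l$ are both $R$-linear operations on maps). For the bracket, restriction to $I$ obviously commutes with composition, so $[\phi,\phi']|_I = [\phi|_I,\phi'|_I]$. For the second component, using that $l\circ\pi$ acts as the identity up to $I$ and that derivations in $\mathrm{Der}_I M$ kill the $I$-part after applying $\pi$, one computes
\[
(\phi\circ\phi')_l = \pi\circ\phi\circ\phi'\circ l = \pi\circ\phi\circ l\circ\pi\circ\phi'\circ l = \phi_l\circ\phi'_l
\]
(the middle equality because $\phi'(l(x)) - l(\phi'_l(x)) \in I$ and $\phi(I)\subseteq I$ so it vanishes under $\pi$), hence $[\phi,\phi']_l = [\phi_l,\phi'_l]$. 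Therefore $\psi([\phi,\phi']) = [\psi(\phi),\psi(\phi')]$, and the proof is complete.
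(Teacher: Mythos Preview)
Your proposal is correct and follows essentially the same approach as the paper: apply $\phi$ to the realization formula $a(f,k)(\vec{x},\vec{a}) = f^{M}(l(\vec{x}))_{k}[\vec{a}]$, expand via the Leibniz rule, write $\phi \circ l(x_i) = l \circ \phi_l(x_i) + b_i$ with $b_i \in I$, and then use the affine hypothesis to kill the cross terms; the bracket calculation via $(\phi \circ \sigma)_l = \phi_l \circ \sigma_l$ is identical. The one place where the paper is more explicit than your sketch is the vanishing of the ``error terms'': since the datum is affine, any summand of the form $f^{M}(l(x_1),\ldots,a_k,\ldots,l(x_n))_i[b_i]$ with $i \neq k$ has two $I$-entries and hence equals the higher action term $a(f,\{i,k\})(\vec{x},\cdot) = 0$, which is precisely why only the $a(f,k)$ terms with $\phi_l$ or $\phi|_I$ substituted survive.
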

\begin{proof}
We first show $(\phi|_{I},\phi_l)$ is a compatible pair for $\phi \in \mathrm{Der}_{I} \, M$. Since $(Q,I,\ast)$ is affine datum, the action terms $a(f,s)(\vec{x},\vec{a})=0$ for $|s|>1$. If we substitute $\phi \circ l(x_i)$ into a in Eq.(\ref{eqn:47}), we see that $I \ni b_{i} = \phi \circ l(x_i) - l \circ \pi \circ \phi \circ l(x_i) = \phi \circ l(x_i) - l \circ \phi_{l}(x_i)$ from some $b_i \in I$. Since the extension $M$ realizes the datum, we have for multilinear $f \in F$ with $n = \ar f$, $\vec{x} \in Q^{n}$, $\vec{a} \in I^{n}$ and $k \neq i$,
\begin{align*}
f^{M} &(l(x_{1}),\ldots,a_{k},\ldots,(x_{n}))_{i}[\phi \circ l(x_{i})] \\
&= f^{M}(l(x_{1}),\ldots,a_{k},\ldots,(x_{n}))_{i}[b_{i} + l \circ \phi_{l}(x_{i})] \\
&= f^{M}(l(x_{1}),\ldots,a_{k},\ldots,(x_{n}))_{i}[b_{i}] + f^{M}(l(x_{1}),\ldots,a_{k},\ldots,(x_{n}))_{i}[l \circ \phi_{l}(x_{i})] \\
&= f^{M}(l(x_{1}),\ldots,a_{k},\ldots,(x_{n}))_{i}[l \circ \phi_{l}(x_{i})] \\
&= a(f,k) \left( (x_{1},\ldots, \phi_{l}(x_{i}),\ldots,x_{n}), \vec{a} \right) \\
&= a(f,k) \left( (x_{1},\ldots, \phi_{l}(x_{i}),\ldots,x_{n}), (a_{1},\ldots,\phi(a_{i}),\ldots,a_{n}) \right).
\end{align*}
Then using that $\phi$ is a derivation we have
\begin{align*}
\phi \circ a(f,k)(\vec{x},\vec{a}) &= \phi \circ f^{M} \left( l(x_{1}),\ldots,a_{k},\ldots,l(x_{n}) \right)  \\
&= f^{M}\left( \phi \circ l(x_1),\ldots,a_{k},\ldots,x_{n} \right) + \cdots + f^{M}\left( x_1,\ldots,a_{k},\ldots,\phi \circ l(x_{n}) \right)  \\
&= f^{M}(l(x_{1}),\ldots,\phi(a_{k}),\ldots,l(x_{n})) + \sum_{i \neq k} f^{M} (l(x_{1}),\ldots,a_{k},\ldots,(x_{n}))_{i}[\phi \circ l(x_{i})]  \\ 
&= \sum_{i \in [n]} a(f,k)\left( (x_{1},\ldots,\phi_{l}(x_{i}),\ldots,x_{n}), (a_{1},\ldots,\phi(a_{i}),\ldots,a_{n}) \right)
\end{align*}
which shows $\psi(\phi) = (\phi|_{I},\phi_l) \in c(I,Q)$. 

Take $\phi,\sigma \in \mathrm{Der}_{I} \, M$. From Eq.(\ref{eqn:47}) we have $\sigma \circ l(x) - l \circ \pi (\sigma \circ l(x)) \in I$ and so $ \phi \circ \sigma \circ l(x) - \phi \circ l \circ \pi (\sigma \circ l(x)) \in I$ since $\phi(I) \subseteq I$. Then applying the canonical epimorphism $\pi$ we have $(\phi \circ \sigma)_{l}(x) = \phi_{l} \circ \sigma_{l}(x)$. Then by direct calculation
\begin{align*}
[\phi,\sigma]_{l}(x) = \pi \circ [\phi,\sigma] \circ l (x) = \pi \circ (\phi \circ \sigma) \circ l(x) - \pi \circ (\sigma \circ \phi) \circ l(x) &= \phi_{l} \circ \sigma_{l}(x) - \sigma_{l} \circ \phi_{l} (x) = [\phi_{l},\sigma_{l}](x).
\end{align*}
Clearly, restriction to $I$ respects the bracket. Since the bracket in $c(I,Q,\ast)$ is the restriction of the bracket in $\mathrm{Der} \, I \times \mathrm{Der} \, Q$, we have shown $\psi$ is a Lie algebra homomorphism.
\end{proof}

For each $(\alpha,\beta) \in c(I,Q,\ast)$ and function $f: Q^{n} \rightarrow I$, define $f^{(\alpha,\beta)}$ by
\begin{align*}
f^{(\alpha,\beta)}(\vec{x}):= \alpha \circ f(\vec{x}) - \sum_{i\in [n]} f(x_{1},\ldots,x_{i-1},\beta(x_{i}),x_{i+1},\ldots,x_{n}).
\end{align*}
If $T$ is a 2-cocycle for affine datum $(Q,I,\ast)$, then define $T^{(\alpha,\beta)} := \{T^{(\alpha,\beta)}_{+}, T^{(\alpha,\beta)}_{r},T^{(\alpha,\beta)}_{f} : r \in R, f \in F \}$. It may not be the case that $T^{(\alpha,\beta)}$ is $\mathcal V$-compatible when $T$ is, but we shall see that it is $_{R} \mathcal M_{F}$-compatible. Let $ H^{2}_{\mathcal V}(Q,I,\ast)^{\mathrm{gr}}$ denote the subset of group-trivial 2-cocycles and note it is a subgroup of $2^{\mathrm{nd}}$-cohomology for affine datum.

\begin{lemma}
Let $\mathcal V$ be a variety of multilinear expansions of $R$-modules and $(Q,I,\ast)$ affine datum in $\mathcal V$. Then $W((\alpha,\beta),[T]) := [T^{(\alpha,\beta)}]$ defines a Lie algebra representation
\[
W: c(I,Q,\ast) \times H^{2}_{\mathcal V}(Q,I,\ast)^{\mathrm{gr}} \rightarrow H^{2}_{_{R} \mathcal M_{F}}(Q,I,\ast)
\]
of $c(I,Q,\ast)$ on $H^{2}_{_{R} \mathcal M_{F}}(Q,I,\ast)$.
\end{lemma}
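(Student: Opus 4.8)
The plan is to check, in order, that $T^{(\alpha,\beta)}$ is an ${}_{R}\mathcal M_{F}$-compatible $2$-cocycle for $(Q,I,\ast)$, that the class $[T^{(\alpha,\beta)}]$ depends only on $[T]$ and is $R$-bilinear in $\bigl((\alpha,\beta),[T]\bigr)$, and that the bracket relation making $W$ a Lie representation holds. At the outset I replace $T$ by a group-trivial representative, so $T_{+}=0$; since $T$ is $\mathcal V$-compatible and $\mathcal V\leq{}_{R}\mathcal M_{F}$, the strict $2$-cocycle identities for the module axioms and for multilinearity of each $f\in F$ already hold for $T$, and because $T_{+}=0$ and $f^{I}\equiv 0$ on the abelian algebra $I$ these say precisely that every $T_{r}$ is $R$-linear and every $T_{f}$ is multilinear in each coordinate. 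I use these facts repeatedly.

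First I would form $T^{(\alpha,\beta)}$ with action terms the given action $\ast$, so that $T^{(\alpha,\beta)}$ is a $2$-cocycle for $(Q,I,\ast)$, and observe that (T1)--(T4) and $T^{(\alpha,\beta)}_{+}=0$ are immediate from $\alpha(0)=0$, $\beta(0)=0$ and the definition of $g\mapsto g^{(\alpha,\beta)}$. To get $I\rtimes_{T^{(\alpha,\beta)}}Q\in{}_{R}\mathcal M_{F}$ I invoke Theorem~\ref{thm:affinecohom} with ${}_{R}\mathcal M_{F}$ in the role of $\mathcal V$: the action $\ast$ is ${}_{R}\mathcal M_{F}$-compatible (being $\mathcal V$-compatible), so only the strict $2$-cocycle identities for $\Id({}_{R}\mathcal M_{F})$ remain to check, and---since $T^{(\alpha,\beta)}_{+}=0$, $f^{I}\equiv 0$, and the action terms are unchanged---each of these reduces, via the $R$-linearity of $\alpha$ and $\beta$, to the corresponding strict identity, which holds because it holds for $T$. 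Hence $[T^{(\alpha,\beta)}]\in H^{2}_{{}_{R}\mathcal M_{F}}(Q,I,\ast)$.

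Next, for well-definedness and bilinearity the key is a Leibniz rule for the twist against $2$-coboundaries: if $G=dh$ is the coboundary witnessed by an additive $h\colon Q\to I$, then $G^{(\alpha,\beta)}$ is the coboundary witnessed by $h^{(\alpha,\beta)}$ (which is again additive). Proving this for the $f$-component is exactly where the hypothesis $(\alpha,\beta)\in c(I,Q,\ast)$ enters: one pushes $\alpha$ through $a(f,i)(\vec x,h(\vec x))$ by (C2), rewrites the term $h(f^{Q}(\vec x))$ using that $\beta$ is a derivation of $Q$, and reorganizes via multilinearity of the action terms; the ``double-insertion'' contributions cancel and what remains is $\sum_{i}a(f,i)(\vec x,h^{(\alpha,\beta)}(\vec x))-h^{(\alpha,\beta)}(f^{Q}(\vec x))$. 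Granting this, if $T\sim T'$ with both group-trivial then $T-T'=dh$ for additive $h$, so $T^{(\alpha,\beta)}-(T')^{(\alpha,\beta)}=(dh)^{(\alpha,\beta)}=d(h^{(\alpha,\beta)})\in B^{2}(Q,I)$; the same computation shows independence of the chosen group-trivial representative, so $W$ is well defined. $R$-bilinearity is then formal: $T\mapsto T^{(\alpha,\beta)}$ is $R$-linear on the factor-sets because $\alpha,\beta$ are, and $(\alpha,\beta)\mapsto T^{(\alpha,\beta)}$ is $R$-linear because the $T_{r},T_{f}$ are $R$-linear/multilinear (group-triviality again).

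Finally, for the representation identity I would prove the sharper statement $T^{[(\alpha,\beta),(\sigma,\kappa)]}=(T^{(\sigma,\kappa)})^{(\alpha,\beta)}-(T^{(\alpha,\beta)})^{(\sigma,\kappa)}$ already at the level of $2$-cocycles: both sides carry the action $\ast$, so it suffices to match factor-sets, and for a multilinear map $g\colon Q^{m}\to I$ this is the identity $g^{[(\alpha,\beta),(\sigma,\kappa)]}=(g^{(\sigma,\kappa)})^{(\alpha,\beta)}-(g^{(\alpha,\beta)})^{(\sigma,\kappa)}$, i.e. the standard fact that $(\alpha,\beta)\cdot g:=\alpha\circ g-\sum_{i}g(x_{1},\ldots,\beta(x_{i}),\ldots,x_{m})$ is a Lie-algebra action of $\mathrm{Der}\,I\times\mathrm{Der}\,Q$ (restricted to $c(I,Q,\ast)$) on such maps: expanding the iterated twists, the mixed single-insertion terms and the off-diagonal double-insertion terms cancel in pairs, the codomain terms give $[\alpha,\sigma]\circ g$, and the diagonal double-insertion terms combine, using multilinearity of $g$, into $-\sum_{i}g(x_{1},\ldots,[\beta,\kappa](x_{i}),\ldots,x_{m})$, which is $([\alpha,\sigma],[\beta,\kappa])\cdot g$. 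Together with the previous paragraphs this gives that $W$ is a Lie representation. (Strictly, $W$ extends to a representation of $c(I,Q,\ast)$ on the group-trivial part $H^{2}_{{}_{R}\mathcal M_{F}}(Q,I,\ast)^{\mathrm{gr}}$ of the ambient cohomology---the twist need not preserve $\mathcal V$-compatibility---and the displayed map is its restriction to the subspace $H^{2}_{\mathcal V}(Q,I,\ast)^{\mathrm{gr}}$.) I expect the Leibniz rule of the third paragraph to be the main obstacle: tracking the cancellations in the $f$-component of $(dh)^{(\alpha,\beta)}$, and, in the same spirit, first pinning down cleanly how the strict $2$-cocycle identities of ${}_{R}\mathcal M_{F}$ collapse for group-trivial affine datum so that the second paragraph really is just $R$-linearity of the twisted factor-sets.
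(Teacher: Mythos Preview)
Your proposal is correct and follows essentially the same approach as the paper: the paper likewise verifies well-definedness by showing that a coboundary $G$ witnessed by $h$ is sent to the coboundary witnessed by $s=\alpha\circ h-h\circ\beta$ (your $h^{(\alpha,\beta)}$) via the compatibility condition (C2), proves the bracket identity $(T^{(\sigma,\kappa)})^{(\alpha,\beta)}-(T^{(\alpha,\beta)})^{(\sigma,\kappa)}=T^{([\alpha,\sigma],[\beta,\kappa])}$ using that group-triviality forces each $T_f$ to be multilinear, and reduces ${}_{R}\mathcal M_{F}$-compatibility of $T^{(\alpha,\beta)}$ to multilinearity of its factor-sets. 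The only differences are cosmetic (order of the three checks, and your added remarks on $R$-bilinearity and on the image landing in the group-trivial part).
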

\begin{proof}
We must first show $W$ is well-defined on cohomology classes. This is done by showing the action on a group-trivial 2-coboundary is again a 2-coboundary. 

Suppose $G$ is a 2-coboundary for the datum; therefore, there is $h: Q \rightarrow I$ which satisfies 

\begin{enumerate}

	\item $0 = h(x) + h(y) - h(x + y)$;

	\item $G_{r}(x) = r \cdot h(x) - h(r \cdot x)$ for all $r \in R$;
	
	\item for all $f \in F$ with $n = \ar f$, $G_{f}(\vec{x}) = \sum_{i \in n} a(f,i)(\vec{x},h(\vec{x})) - h(f^{Q}(\vec{x}))$.

\end{enumerate} 
Take $(\alpha,\beta) \in c(I, Q, \ast)$. For the ring action we have
\begin{align*}
G_{r}(x)^{(\alpha,\beta)} = \alpha \circ G_{r}(x) - G_{r}(\beta(x)) &= \alpha \circ \left( r \cdot h(x) - h(r \cdot x) \right) - \left( r \cdot h(\beta(x)) - h(r \cdot \beta(x)) \right) \\
&= r \cdot \left( \alpha \circ h - h \circ \beta \right)(x) - \left( \alpha \circ h - h \circ \beta \right) ( r \cdot x) \\
&= r \cdot s(x) - s(r \cdot x)  
\end{align*}
where we set $s:= (\alpha \circ h - h \circ \beta) : Q \rightarrow I$. Note we explicitly used that $\alpha$ and $\beta$ are module homomorphisms and $h$ is a group homomorphism by (1). In the next calculation, we use the fact that the action terms are unary in $I$. For multilinear $f \in F$ with $n = \ar f$, we have 
\begin{align*}
&G_{f}^{(\alpha,\beta)}(\vec{x}) \\
&= \alpha \circ G_{f}(\vec{x}) - \sum_{i \in [n]} G_{f}(x_{1},\ldots,\beta(x_{i}),\ldots,x_{n}) \\
&=  \sum_{k \in [n]} \alpha \circ a(f,k) \left( \vec{x}, h(\vec{x}) \right) - \alpha \circ h(f^{Q}(\vec{x})) \\
&\quad - \sum_{i \in [n]} \Big( \sum_{k \in [n]} a(f,k)\left( (x_{1},\ldots,\beta(x_{i}),\ldots,x_{n}) , (h(x_{1}),\ldots, h \circ \beta(x_{i}),\ldots,h(x_{n})) \right) - h(f^{Q}(\vec{x}))_{i} [\beta(\vec{x})] \Big)   \\
&= \sum_{k \in [n]} \sum_{i \in [n]} a(f,k) \big( (x_{1},\ldots,\beta(x_{i}),\ldots,x_{n}), (h(x_{1}),\ldots,\alpha \circ h(x_{i}),\ldots,h(x_{n})) \big) - \alpha \circ h(f^{Q}(\vec{x}))   \\
&\quad  -  \sum_{i \in [n]}  \sum_{k \in [n]} a(f,k)\big( (x_{1},\ldots,\beta(x_{i}),\ldots,x_{n}) , (h(x_{1}),\ldots, h \circ \beta(x_{i}),\ldots,h(x_{n}))  \big) + \sum_{i \in [n]} h(f^{Q}(\vec{x}))_{i} [\beta(\vec{x})]  \\
&= \sum_{k \in [n]} a(f,k) \big( (x_{1},\ldots,\beta(x_{k}),\ldots,x_{n}) , (h(x_{1}),\ldots, \alpha \circ h(x_{k}) - h \circ \beta(x_{k}),\ldots,h(x_{n}))  \big) \\
&\quad - \alpha \circ h(f^{Q}(\vec{x})) + h \circ \beta \left( f^{Q}(\vec{x}) \right) \\
&= \sum_{k \in [n]} a(f,k)(\vec{x},s(\vec{x})) - s \left( f^{Q}(\vec{x}) \right).
\end{align*}
This shows $G^{\alpha,\beta}$ is a 2-coboundary witnessed by $s:Q \rightarrow I$.

Next, we show for fixed $[T] \in H^{2}_{\mathcal V}(Q,I,\ast)^{\mathrm{gr}}$ that $W(-,[T]): c(I,Q,\ast) \rightarrow H^{2}_{_{R} \mathcal M_{F}}(Q,I,\ast)$ respects the bracket. Since $[T]$ is group-trivial, then we can observe that $T$ is linear with respect to equivalence. For simplicity, we may assume $T_{+} = 0$. Then we may take an extension $M$ and lifting $l:Q \rightarrow M$ which is a group homomorphism and for this lifting the 2-cocycle is realized by definitions (R1)-(R4); in particular, for multilinear operation $f \in F$ we have $T_{f}(\vec{x}) = f^{M}(l(\vec{x})) - l(f^{Q}(\vec{x}))$. Then 
\begin{align*}
T_{f}(x_{1},\ldots,x_{i-1},q + p,x_{i+1},\ldots,x_{n}) &= f^{M}(l(\vec{x}))_{i}[l(q + p)] - l \big( f^{Q}(\vec{x})_{i}[q+p] \big) \\
&= f^{M}(l(\vec{x}))_{i}[l(q) + l(p)] - l \big( f^{Q}(\vec{x})_{i}[q] + f^{Q}(\vec{x})_{i}[p] \big) \\
&= f^{M}(l(\vec{x}))_{i}[l(q)] + f^{M}(l(\vec{x}))_{i}[l(p)] - l \big( f^{Q}(\vec{x})_{i}[q] \big) - l \big( f^{Q}(\vec{x})_{i}[p] \big) \\
&= T_{f}(x_{1},\ldots,x_{i-1},q,x_{i+1},\ldots,x_{n}) + T_{f}(x_{1},\ldots,x_{i-1},p,x_{i+1},\ldots,x_{n}).
\end{align*}
Now take $(\alpha,\beta),(\sigma,\kappa) \in c(I,Q,\ast)$ and note $[(\alpha,\beta),(\sigma,\kappa)] = ([\alpha,\sigma],[\beta,\kappa])$. Then  
\begin{align*}
(T^{(\sigma,\kappa)})^{(\alpha,\beta)}(\vec{x}) &- (T^{(\alpha,\beta)})^{(\sigma,\kappa)}(\vec{x}) \\
&= \Big( \sigma \circ T(\vec{x}) - \sum_{i \in [n]} T(\vec{x})_{i}[\kappa(\vec{x})] \Big)^{(\alpha,\beta)} - \Big( \alpha \circ T(\vec{x}) - \sum_{i \in [n]} T(\vec{x})_{i}[\beta(\vec{x})] \Big)^{(\sigma,\kappa)}\\
&= \alpha \circ \big( \sigma \circ T(\vec{x}) - \sum_{i \in [n]} T(\vec{x})_{i}[\kappa(\vec{x})]  \big) - \sum_{j \in [n]} \big( \sigma \circ T(\vec{x}) - \sum_{i \in [n]} T(\vec{x})_{i}[\kappa(\vec{x})]  \big)_{j}[\beta(\vec{x})] \\
&\quad - \sigma \circ \big( \alpha \circ T(\vec{x}) - \sum_{i \in [n]} T(\vec{x})_{i}[\beta(\vec{x})]  \big) + \sum_{j \in [n]} \big( \alpha \circ T(\vec{x}) - \sum_{i \in [n]} T(\vec{x})_{i}[\beta(\vec{x})]  \big)_{j}[\kappa(\vec{x})]   \\
&= [\alpha,\sigma] \circ T(\vec{x}) +  \sum_{j \in [n] } \sum_{i \in [n]} \big( T(\vec{x})_{i} [\kappa(\vec{x})] \big)_{j} [\beta(\vec{x})] - \sum_{j \in [n] } \sum_{i \in [n]} \big( T(\vec{x})_{i} [\beta(\vec{x})] \big)_{j} [\kappa(\vec{x})] \\
&= [\alpha,\sigma] \circ T(\vec{x}) + \sum_{i \in n} T(\vec{x})_{i} \left[ \kappa \circ \beta (\vec{x}) \right] - \sum_{i \in n} T(\vec{x})_{i} \left[ \beta \circ \kappa(\vec{x}) \right]  \\
&= [\alpha,\sigma] \circ T(\vec{x}) - \sum_{i \in n} T(\vec{x})_{i} \left[ [\beta,\kappa](\vec{x}) \right] = T^{([\alpha,\sigma],[\beta,\kappa])}(\vec{x}).
\end{align*}
It follows that $W \big( [(\alpha,\beta),(\sigma,\kappa)],[T] \big) = \big[ W \big( \alpha,\beta),[T] \big), W \big( (\alpha,\beta),[T] \big) \big]$.

Finally, we show $T^{(\alpha,\beta)}$ is $_{R} \mathcal M_{F}$-compatible. Since the datum $(Q,I,\ast)$ is affine, the equations in Example~\ref{ex:multiequations} to enforce $_{R} \mathcal M_{F}$-compatibility for a group trivial 2-cocycle $T$ of the datum simplifies to just stating that $T_{f}$ is a multilinear operation for each $f \in F$. It then follows that if we take $\mathcal V$-compatible group-trivial $[T] \in H^{2}_{\mathcal V}(Q,I,\ast)^{\mathrm{gr}}$ and since $\mathcal V \leq \, _{R} \mathcal M_{F}$, then $T_{f}$ is multilinear for each $f \in F$. Since both $\alpha$ and $\beta$ are module homomorphisms, the compositions $\alpha \circ T(\vec{a})$ and $T_{f}(\vec{x})_{i} [\beta(\vec{x})]$ are multilinear. Since the cohomology classes for affine datum forms an abelian group, $[T^{(\alpha,\beta)}] \in H^{2}_{_{R} \mathcal M_{F}}(Q,I,\ast)$. 
\end{proof}

\begin{proof}(of Theorem~\ref{thm:derivations})
According to Theorem~\ref{thm:multirep}, we may assume the extension is given by $M = I \rtimes_{T} Q$ and $\pi: I \rtimes_{T} Q \rightarrow Q$ is given by second-projection. Fix the lifting $l: Q \rightarrow I \rtimes_{T} Q$ defined by $l(x) = \left\langle 0, x \right\rangle$. We may identify $I$ with it's copy $I \times \{0\}$ in $I \rtimes_{T} Q$. Since the extension realizes a group-trivial 2-cocycle, we may assume $T_{+}=0$ and the lifting $l$ is a group homomorphism.

First, we show $\ker \psi \approx \mathrm{Der}(Q,I,\ast)$. Assume $\psi(\phi) = (\phi|_{I},\phi_{l}) = (0,0)$. Note $0 = \phi_{l}(x) = \pi \circ \phi \circ l(x)$ implies $\phi \circ l(x) \in I$ for all $x \in Q$. Then $\phi(a,0) = (0,0)$ implies $\phi(a,x) = \phi(a,0) + \phi(0,x) = \phi(0,x)$. We see that $F_{f}\left(l(\vec{x})\right) = \left\langle T_{f}(\vec{x}), f^{Q}(\vec{x}) \right\rangle = \left\langle T_{f}(\vec{x}), 0 \right\rangle + \left\langle 0, f^{Q}(\vec{x}) \right\rangle$. Then since $\phi$ is a derivation, we have
\begin{align*}
\phi \circ l (f^{Q}(\vec{x})) = \phi(0,f^{Q}(\vec{x})) = \phi \left( F_{f}\left(l(\vec{x})\right) - \left\langle T_{f}(\vec{x}), 0 \right\rangle \right) &= \phi \left( F_{f}\left(l(\vec{x})\right) \right) \\
&= \sum_{i \in [n]} F_{f} \left( l(\vec{x}) \right)_{i} [\phi(l(\vec{x}))] \\
&= \left\langle \, \sum_{i \in [n]} a(f,i)\left( \vec{x}, \phi \circ l(\vec{x}) \right) , 0 \, \right\rangle
\end{align*}
by realization; thus, $\phi \circ l \in \mathrm{Der} (Q,I,\ast)$. Define $\Delta: \ker \psi \rightarrow \mathrm{Der} (Q,I,\ast)$ by $\Delta(\phi):= \phi \circ l$. We show $\Delta$ is a Lie algebra isomorphism. It is clearly seen to respect the standard bracket operation.

For injectivity, assume $\phi \circ l = \Delta(\phi)=0$. Since $\phi \in \ker \psi$, we see that $\phi(a,x) = \phi(a,0) + \phi(0,x) = \phi \circ l(x) = 0$; thus, $\phi = 0$. For surjectivity, take $\sigma \in \mathrm{Der} (Q,I,\ast)$. Define $\gamma(a,x) := \left\langle \sigma(x), 0 \right\rangle$. Then $\gamma(a,0)= \left\langle 0,0 \right\rangle$ and $\pi \circ \gamma \circ l(x) = \pi(\sigma(x),0) = 0$ which implies $\gamma \in \ker \psi$. We also easily see that $\Delta(\gamma) = \sigma$ by the identification $I$ with $I \times \{0\}$. The last step is to show $\gamma$ is a derivation. It is easy to see that $\gamma$ is a module homomorphism. For $f \in F$ with $n = \ar f$, we calculate using that $\sigma \in \mathrm{Der} (Q,I,\ast)$, 
\begin{align*}
\gamma \circ F_{f} \left( \left\langle a_1,x_1 \right\rangle,\ldots, \left\langle a_n,x_n \right\rangle \right) = \gamma \left( \, \sum_{i \in [n]} a(f,i)(\vec{x},\vec{a}) + T_{f}(\vec{x}), f^{Q}(\vec{x}) \, \right)  &= \left\langle \, \sigma \circ f^{Q}(\vec{x}), 0 \, \right\rangle  \\
&= \left\langle \, \sum_{i \in [n]} a(f,i)(\vec{x},\sigma(\vec{x})), 0 \, \right\rangle  \\
&= \sum_{i \in [n]} F_{f} \left( \vec{x} \right)_{i} [\sigma(\vec{x})]
\end{align*}
by realization. We have finished showing $\ker \psi \approx \mathrm{Der}(Q,I,\ast)$.

Next we show $\im \psi \subseteq \ker W_{T}$. Take $\phi \in \mathrm{Der}_{I} M$. From Eq.(\ref{eqn:47}), we have $\phi \circ l(x) - l \circ \pi \circ \phi \circ l(x) \in I$ and so the map $h:Q \rightarrow I$ defined by $h(x):= \phi \circ l(x) - l \circ \phi_{l}(x)$ is the failure of the lifting $l$ to commute with the two derivations. Fix $f \in F$ with $n = \ar f$. Using the fact that both $\phi$ and $\phi_{l}$ are derivations and $l$ is group homomorphism we have
\begin{align*}
\phi \circ F_{f}\left( l(\vec{x}) \right) =  \sum_{i \in [n]} F_{f} \left( l(\vec{x}) \right)_{i} [\phi(l(\vec{x}))] &= \sum_{i \in [n]} F_{f} \left( l(\vec{x}) \right)_{i} [ (h + l \circ \phi_{l})(\vec{x}) ]  \\
&= \sum_{i \in [n]} F_{f} \left( l(\vec{x}) \right)_{i} [ h(\vec{x}) ]  + \sum_{i \in [n]} F_{f} \left( l(\vec{x}) \right)_{i} [ l \circ \phi_{l} (\vec{x}) ]  \\
&= \sum_{i \in [n]} a(f,i)\left( \vec{x}, h(\vec{x}) \right)  +  \sum_{i \in [n]} F_{f} \left( l(\vec{x}) \right)_{i} [ l \circ \phi_{l} (\vec{x}) ].
\end{align*}
We also can calculate
\begin{align*}
\phi \circ F_{f}\left( l(\vec{x}) \right) &= \phi \left( l \left( f^{Q}(\vec{x}) \right) + T_{f}(\vec{x}) \right)  \\
&= \phi \circ l \left( f^{Q}(\vec{x}) \right) + \phi \circ T_{f}(\vec{x})    \\ 
&= h(f^{Q}(\vec{x})) + l \circ \phi_{l}\left( f^{Q}(\vec{x}) \right) + \phi \circ T_{f}(\vec{x})  \\
&= h(f^{Q}(\vec{x})) + \sum_{i \in [n]} l \circ f^{Q}(\vec{x})_{i} [\phi_{l}(\vec{x})] + \phi \circ T_{f}(\vec{x}). 
\end{align*}
Rewriting the above two equations yields
\begin{align*}
\sum_{i \in [n]} a(f,i)\left( \vec{x}, h(\vec{x}) \right) - h(f^{Q}(\vec{x})) &=  \sum_{i \in [n]} l \circ f^{Q}(\vec{x})_{i} [\phi_{l}(\vec{x})] + \phi \circ T_{f}(\vec{x}) - \sum_{i \in [n]} F_{f} \left( l(\vec{x}) \right)_{i} [ l \circ \phi_{l} (\vec{x}) ] \\
&= \phi \circ T_{f}(\vec{x}) - \sum_{i \in [n]} \Big(  F_{f} \left( l(\vec{x}) \right)_{i} [ l \circ \phi_{l} (\vec{x}) ] - l \circ f^{Q}(\vec{x})_{i} [\phi_{l}(\vec{x})] \Big)  \\
&= \phi \circ T_{f}(\vec{x}) - \sum_{i \in [n]} T_{f}(\vec{x})_{i} [l \circ \phi_{l}(\vec{x})] \\
&= T^{(\phi|_{I},\phi_{l})}_{f}.
\end{align*}
This shows $W(\psi(\phi),[T]) = W((\phi|_{I},\phi_{l}), [T]) = 0$ witnessed by $h$.

We now complete the proof of exactness. Assume $(\alpha,\beta) \in c(I,Q,\ast)$ such that $[T^{(\alpha,\beta)}]=0$. There exists $h: Q \rightarrow I$ such that 
\begin{itemize}

	\item $0 = h(x) + h(y) - h(x+y)$;
	
	\item for $r \in R$, $\alpha \circ T_{r}(x) - T_{r}(\beta(x)) = r \cdot h(x) - h(r \cdot x)$;
	
	\item for $f \in F$ with $n = \ar f$, 
\[ 
\alpha \circ T_{f}(\vec{x}) - \sum_{i \in [n]} T_{f}(\vec{x})_{i} [\beta(\vec{x})]  = \sum_{i \in [n]} a(f,i)(\vec{x},h(\vec{x})) - h(f^{Q}(\vec{x})).
\]

\end{itemize}
Define $\phi: I \times Q \rightarrow I \times Q$ by $\phi (a,x):= \left\langle \alpha(a) + h(x), \beta(x) \right\rangle$. It is easy to see that $\phi (I) \subseteq I$ and $\phi|_{I} = \alpha$, and $\pi \circ \phi \circ l(x) = \pi \circ \phi(0,x)= \pi(h(x),\beta(x)) = \beta(x)$ and so $\phi_{l} = \beta$; thus, $\psi(\phi) = (\alpha,\beta)$. We show $\phi \in \mathrm{Der}_{I} M$. 

To see that $\phi$ is a module homomorphism, we calculate
\begin{align*}
\phi \big( r \cdot \left\langle a, x \right\rangle + \left\langle b, y \right\rangle \big) &= \phi \big( \left\langle \, r \cdot a + b + T_{r}(x), r \cdot x + y \, \right\rangle \big)  \\
&= \left\langle \, \alpha(r \cdot a) + \alpha(b) + \alpha \circ T_{r}(x) + h(r \cdot x + y), \beta(r \cdot x + y) \, \right\rangle  \\
&= \left\langle \, r \cdot \alpha(a) + \alpha(b) + \alpha \circ T_{r}(x) + h(r \cdot x) + h(y), r \cdot \beta(x) + \beta(y) \, \right\rangle  \\
&= \left\langle \, r \cdot \alpha(a) + \alpha(b) + T_{r}(\beta(x)) + r \cdot h(x) + h(y), r \cdot \beta(x) + \beta(y) \, \right\rangle  \\
&= \left\langle \,  r \cdot \alpha(a) +  T_{r}(\beta(x)) + r \cdot h(x) , r \cdot \beta(x) \, \right\rangle + \left\langle \, \alpha(b) + h(y) , \beta(y) \, \right\rangle \\
&= r \cdot \left\langle \alpha(a) + h(x) , \beta(x) \right\rangle + \phi(b,y) \\
&= r \cdot \phi\left(a,x \right) + \phi(b,y)
\end{align*}
Now for for $f \in F$ with $n = \ar f$, we have
\begin{align*}
\sum_{i \in [n]} &F_{f} \Big( \left\langle a_{1}, x_{1} \right\rangle, \ldots, \phi \big( \left\langle a_{i}, x_{i} \right\rangle \big) , \ldots, \left\langle a_{n}, x_{n} \right\rangle \Big) \\
&= \sum_{i \in [n]} F_{f} \Big( \left\langle a_{1}, x_{1} \right\rangle, \ldots, \left\langle \alpha(a_{i}) + h(x_{i}), \beta(x_{i}) \right\rangle , \ldots, \left\langle a_{n}, x_{n} \right\rangle \Big)   \\
&= \sum_{i \in [n]} \left\langle \ \sum_{k \in [n]} a(f,k) \big( \vec{x}_{i} [\beta(\vec{x})], (a_{1},\ldots,\alpha(a_{i}) + h(x_{i}),\ldots,a_{n}) \big) + T_{f}(\vec{x})_{i} [\beta(\vec{x})]  , f^{Q}(\vec{x})_{i} [\beta(\vec{x})] \ \right\rangle  \\
&= \Bigg< \ \sum_{i \in [n]} \sum_{k \in [n]} a(f,k) \big( \vec{x}_{i}  [\beta(\vec{x})], \vec{a}_{i}  [\alpha(\vec{a})] \big) + \sum_{i \in [n]} a(f,i) \big( \vec{x}_{i}  [\beta(\vec{x})],  \vec{a}_{i}  [ h(\vec{x}) ] \big)  \\
&\quad + \sum_{i \in [n]} T_{f}( \vec{x} )_{i} [\beta(\vec{x})] , \sum_{i \in [n]} f^{Q} (\vec{x})_{i} [\beta(\vec{x})] \ \Bigg> \\
&= \Bigg< \ \sum_{k \in [n]} \alpha \circ a(f,k)(\vec{x},\vec{a}) + \sum_{i \in [n]} a(f,i) \big( \vec{x}, h(\vec{x}) \big) + \sum_{i \in [n]} T_{f}( \vec{x} )_{i} [\beta(\vec{x})] , \beta \left( f^{Q}(\vec{x}) \right) \ \Bigg> \\
&= \Bigg< \ \sum_{k \in [n]} \alpha \circ a(f,k)(\vec{x},\vec{a}) + \alpha \circ T_{f}(\vec{x}) + h \left( f^{Q}(\vec{x}) \right)   , \beta \left( f^{Q}(\vec{x}) \right) \ \Bigg> \\ \\
&= \phi \left( \, \sum_{k \in [n]} a(f,k)(\vec{x}) + T_{f}(\vec{x}) , f^{Q}(\vec{x}) \, \right)  = \phi \circ F_{f} \left( \left\langle a_{1}, x_{1} \right\rangle, \ldots, \left\langle a_{n}, x_{n} \right\rangle \right)    
\end{align*}
Altogether, we have shown $\phi$ is a derivation and so $(\alpha,\beta) \in \im \psi$.
\end{proof}

For each extension class $[T] \in H^{2}_{\mathcal V}(Q,I,\ast)^{\mathrm{gr}}$, we may take a section of the map $\Psi$ in the Lie algebra extension in Eq~\eqref{eqn:derivext} which determines a Lie algebra 2-cocycle which realizes the ideal-preserving derivations of the extension determined by $T$ as a semidirect product.

\begin{corollary}
Let $\mathcal V$ be a variety of modules expanded by multilinear operations with affine datum $(Q,I,\ast)$. For each $[T] \in H^{2}_{\mathcal V}(Q,I,\ast)^{\mathrm{gr}}$, there are action terms $\hat{\ast}$ compatible with the variety of Lie algebras which realize the semidirect product
\[
\mathrm{Der}_{I} (I \rtimes_{T,\ast} Q) \approx \mathrm{Der}(Q,I,\ast) \rtimes_{\hat{\ast}} \ker W_{T}.
\]
\end{corollary}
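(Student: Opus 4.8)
The plan is to re-read the desired isomorphism as the assertion that the short exact sequence of $R$-Lie algebras produced by Theorem~\ref{thm:derivations},
\[
0 \longrightarrow \mathrm{Der}(Q,I,\ast) \longrightarrow \mathrm{Der}_{I} M \stackrel{\psi}{\longrightarrow} \ker W_{T} \longrightarrow 0,
\]
admits a Lie algebra section, and then to invoke Proposition~\ref{prop:1} to recognize $\mathrm{Der}_{I} M$ as a semidirect product. Since $[T] \in H^{2}_{\mathcal V}(Q,I,\ast)^{\mathrm{gr}}$, I would first normalize exactly as in the proof of Theorem~\ref{thm:derivations}, so that $M = I \rtimes_{T,\ast} Q$ with $T_{+}=0$, $\pi$ the second projection, and $l(x)=\left\langle 0,x \right\rangle$ a group-homomorphism lifting. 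Then Lemma~\ref{lem:derivdecon} and the discussion around it show each $\phi \in \mathrm{Der}_{I} M$ is uniquely of the form $\phi\left\langle a,x \right\rangle = \left\langle \alpha(a)+h(x), \beta(x) \right\rangle$ with $\alpha = \phi|_{I}$, $\beta = \phi_{l}$, and $h = \phi\circ l - l\circ\phi_{l}$ an additive map $Q \to I$; I write $\phi = (\alpha,\beta,h)$, so that $\psi(\phi) = (\alpha,\beta)$.

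Next I would fix the Lie structure of the kernel and identify the action. A direct bracket computation in $\mathrm{Der}_{I} M$ in this normal form gives $[\,(0,0,d),(0,0,d')\,]=(0,0,0)$ and $[\,(\alpha,\beta,h),(0,0,d)\,]=(0,0,\ \alpha\circ d - d\circ\beta)$, so $\mathrm{Der}(Q,I,\ast) = \ker\psi$ is an abelian ideal of $\mathrm{Der}_{I} M$ and the induced action of $\ker W_{T}$ on it is $(\alpha,\beta)\cdot d := \alpha\circ d - d\circ\beta$, which lands in $\mathrm{Der}(Q,I,\ast)$ because the bracket of a derivation with an element of an ideal stays in the ideal. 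Since this action is the restriction of the Lie bracket of $\mathrm{Der}_{I} M$ to one of its abelian ideals, the associated action terms---the $\hat{\ast}$ of the statement, namely $a([\cdot,\cdot],\{1\})$ and $a([\cdot,\cdot],\{2\})$ read off from $(\alpha,\beta)\cdot d$---are automatically compatible with the variety of $R$-Lie algebras; indeed, once a Lie section $s$ is available, expanding $[\,(0,0,d)+s(k),(0,0,d')+s(k')\,]$ returns the semidirect-product bracket $(\,k\cdot d' - k'\cdot d,\ [k,k']\,)$.

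The substantive step is the construction of a Lie section $s \colon \ker W_{T} \to \mathrm{Der}_{I} M$ of $\psi$. The last part of the proof of Theorem~\ref{thm:derivations} already produces, for each $(\alpha,\beta) \in \ker W_{T}$, an additive witness $h = h_{(\alpha,\beta)}$ solving the $R$-linear system $\alpha\circ T_{r}(x) - T_{r}(\beta(x)) = r\cdot h(x) - h(r\cdot x)$ and $\alpha\circ T_{f}(\vec{x}) - \sum_{i}T_{f}(\vec{x})_{i}[\beta(\vec{x})] = \sum_{i}a(f,i)(\vec{x},h(\vec{x})) - h(f^{Q}(\vec{x}))$, and hence the lift $(\alpha,\beta,h_{(\alpha,\beta)})$; for a fixed $(\alpha,\beta)$ the admissible witnesses form a coset of $\mathrm{Der}(Q,I,\ast)$, and the defining conditions are $R$-linear in $h$ and in $(\alpha,\beta)$ jointly. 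I would exploit this linearity to choose the witnesses coherently, so that $(\alpha,\beta)\mapsto (\alpha,\beta,h_{(\alpha,\beta)})$ is $R$-linear; then the failure of this map to respect brackets is recorded by a Lie $2$-cocycle $\Theta$ for the datum $(\ker W_{T},\mathrm{Der}(Q,I,\ast),\hat{\ast})$, and re-expanding $[\,s(\alpha,\beta),s(\sigma,\kappa)\,]$ from the explicit formulas (compare the computations for Lemma~\ref{lem:compderiv}) shows $\Theta$ to be a $2$-coboundary, so that a further adjustment of $s$ by the witnessing map yields an honest Lie section. With $\psi\circ s = \mathrm{id}$ in place, Proposition~\ref{prop:1} then delivers $\mathrm{Der}_{I} M \approx \mathrm{Der}(Q,I,\ast) \rtimes_{\hat{\ast}} \ker W_{T}$ with the action $\hat{\ast}$ of the previous paragraph.

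I expect the main obstacle to be exactly this coherent choice of witnesses---making $(\alpha,\beta)\mapsto h_{(\alpha,\beta)}$ simultaneously $R$-linear and bracket-compatible, equivalently showing that the relevant second-cohomology class of $(\ker W_{T},\mathrm{Der}(Q,I,\ast),\hat{\ast})$ vanishes. Everything else---the abelian-ideal computation, the description of $\hat{\ast}$, and the check that $\mathrm{Der}(Q,I,\ast)\rtimes_{\hat{\ast}}\ker W_{T}$ lies in the variety of $R$-Lie algebras---is bookkeeping already implicit in Lemma~\ref{lem:compderiv}, Lemma~\ref{lem:derivdecon}, and the proof of Theorem~\ref{thm:derivations}.
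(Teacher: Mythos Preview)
Your approach coincides with the paper's: both recognize that Theorem~\ref{thm:derivations} yields a short exact sequence of Lie algebras whose splitting is equivalent to the vanishing of the associated $2$-cohomology class in $H^{2}(\ker W_{T},\mathrm{Der}(Q,I,\ast),\hat{\ast})$, and both identify the witness as $h(\sigma,\kappa)=h_{(\sigma,\kappa)}$, the very maps produced in the final step of the proof of Theorem~\ref{thm:derivations}. The paper carries this out by choosing an \emph{arbitrary} set-theoretic section $l_{T}(\sigma,\kappa)\langle a,x\rangle=\langle\sigma(a)+h_{(\sigma,\kappa)}(x),\kappa(x)\rangle$, computing the full Lie-algebra $2$-cocycle $S$ (both $S_{+}$ and $S_{[-,-]}$, and the action terms), and checking directly that all factor sets match the coboundary formulas (B1)--(B3) with the single witness $h$.

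The one point to correct in your write-up is the intermediate step where you ``exploit this linearity to choose the witnesses coherently, so that $(\alpha,\beta)\mapsto(\alpha,\beta,h_{(\alpha,\beta)})$ is $R$-linear.'' Linearity of the defining conditions in $h$ and $(\alpha,\beta)$ does \emph{not} by itself produce an $R$-linear section: that would be asking the short exact sequence of $R$-modules to split, which is automatic over a field but not over a general ring $R$. The paper sidesteps this entirely by never demanding linearity of the section and instead verifying that $S_{+}$ and $S_{[-,-]}$ are simultaneously of coboundary form with the same witness $h$; once $[S]=0$ is established this way, the existence of a Lie (hence $R$-linear) section follows a posteriori. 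Your two-step plan (first $R$-linear, then bracket-compatible) should therefore be collapsed into the single computation the paper performs, which you already anticipate when you say the main obstacle is ``equivalently showing that the relevant second-cohomology class \dots\ vanishes.''
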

\begin{proof}
For each $[T] \in H^{2}_{\mathcal V}(Q,I,\ast)^{\mathrm{gr}}$, Eq~\eqref{eqn:derivext} yields an extension of Lie algebras
\begin{align}
0 \longrightarrow \mathrm{Der}(Q,I,\ast) \longrightarrow \mathrm{Der}_{I} I \rtimes_{T} Q \stackrel{\psi}{\longrightarrow} \ker W_{T} \longrightarrow 0 .
\end{align}
According to the proof of Theorem~\ref{thm:derivations}, a section of $\psi$ can be constructed in the following manner: for each $(\sigma,\kappa) \in \ker W_{T}$ there is a map $h_{(\sigma,\kappa)}: Q \rightarrow I$ which witnesses that $W_{T}(\sigma,\kappa) = [T^{(\sigma,\kappa)}] = 0$; therefore, if we we define
\begin{align}\label{eqn:sectionderiv} 
l_{T}(\sigma,\kappa) (a,x):= \left\langle \, \sigma(a) + h_{(\sigma,\kappa)}(x) , \kappa(x) \, \right\rangle,
\end{align}
then we observed $l^{T}(\sigma,\kappa) \in \mathrm{Der}_{I} ( I \rtimes_{T} Q)$ and $\psi \circ l_{T}(\sigma,\kappa) = (\sigma,\kappa)$. The section $l_{T}$ determines a Lie algebra 2-cocycle $S$ with action terms $\hat{\ast}$ according to Theorem~\ref{thm:multirep}; thus, 
\[
\mathrm{Der}_{I} \, I \rtimes_{T} Q \approx \mathrm{Der}(Q,I,\ast) \rtimes_{S,\hat{\ast}} \ker W_{T}
\] 
is an extension realizing datum $(\ker W_{T}, \mathrm{Der}(Q,I,\ast) , \hat{\ast})$. We show that the function $h: \ker W_{T} \rightarrow \mathrm{Der}(Q,I,\ast)$ given by $h(\sigma,\kappa)(x) := h_{(\sigma,\kappa)}(x)$ witnesses that $[S]=0$ in cohomology. We first use Eq~\eqref{eqn:sectionderiv} to evaluate the factor sets of the 2-cocycle $S$. Since the extension is group-trivial we have 
\begin{align*} 
S^{T}_{+} \big( (\sigma,\kappa),(\gamma,\beta) \big)(a,x) &= \Big( l_{T}(\sigma,\kappa) + l_{T}(\gamma,\beta) - l_{T}(\sigma + \gamma,\kappa + \beta) \Big)(a,x) \\
&= \left\langle h_{(\sigma,\kappa)}(x) + h_{(\gamma,\beta)}(x) - h_{(\sigma + \gamma,\kappa + \beta)}(x) , 0 \right\rangle
\end{align*}
and
\begin{align*} 
S^{T}_{[-,-]} &\big( (\sigma,\kappa),(\gamma,\beta) \big)(a,x) \\
&= \Big( \big[ l_{T}(\sigma,\kappa), l_{T}(\gamma,\beta) \big] - l_{T} \big( [ \sigma , \gamma ] , [ \kappa , \beta ] \big) \Big)(a,x) \\
&= l_{T}(\sigma,\kappa) \circ l_{T}(\gamma,\beta)(a,x) - l_{T}(\gamma,\beta) \circ l_{T}(\sigma,\kappa)(a,x) - l_{T} \big( [ \sigma , \gamma ] , [ \kappa , \beta ] \big)(a,x)   \\
&= \left\langle \sigma \circ h_{(\gamma,\beta)}(x) + h_{(\sigma,\kappa)} \circ \beta(x) - \gamma \circ h_{(\sigma,\kappa)}(x) - h_{(\gamma,\beta)} \circ \kappa(x) - h_{([\sigma,\gamma],[\kappa,\beta])}(x) , 0 \right\rangle \\
&= \left\langle  h_{(\gamma,\beta)}^{(\sigma,\kappa)}(x) - h_{(\sigma,\kappa)}^{(\gamma,\beta)}(x) - h_{([\sigma,\gamma],[\kappa,\beta])}(x) , 0 \right\rangle .
\end{align*}
If we evaluate the action $a([-,-],1): \mathrm{Der}(Q,I,\ast) \times \ker W_{T} \rightarrow \mathrm{Der}(Q,I,\ast)$ induced by $l_{T}$, then 
\begin{align*}
a \big( [-,-],1 \big)(d,(\gamma,\beta))(a,x) &= \big[ d,  l_{T}(\gamma,\beta) \big] (a,x) \\
&= d \circ l_{T}(\gamma,\beta)(a,x) - l_{T}(\gamma,\beta) \circ d(a,x)  \\
&= d( \gamma(a) + h_{(\gamma,\beta)}(x), \beta(x) ) - l_{T}(\gamma,\beta)(d(x),0)  \\
&= \left\langle d \circ \beta (x) - \gamma \circ d(x) , 0 \right\rangle \\
&= \left\langle - d^{(\gamma,\beta)}(x)  , 0 \right\rangle ;  
\end{align*}
similarly, $a([-,-],2)((\sigma,\kappa),d)(a,x) = \left\langle d^{(\sigma,\kappa)}(x) , 0 \right\rangle$. From the previous calculations, we see that $[S] = 0$.
\end{proof}


\section{Low-dimensional Hochschild-Serre sequence}\label{section:5}
\vspace{0.3cm}

In this section, we establish a Hochschild-Serre exact sequence \cite{hochserre} for the first and second cohomology groups associated to affine datum and a general extension in a variety of modules expanded by multilinear operations. The inflation, restriction and transgression maps will be defined the same way as in the group case; however, there will be two main alterations in the development. For each of the cohomology groups, rather than just the cohomology associated to the quotient algebra, we must restrict to a certain null submodule of the given affine datum. The second alteration is the introduction of the $\square$-condition of an action which will restrict the domain of the transgression map on $1^{\mathrm{st}}$-cohomology. When restricted to the null submodule, the $\square$-condition is greatly simplified.

\begin{theorem}\label{thm:HSexact}
Let $\mathcal V$ be a variety of modules expanded by multilinear operations and $M \in \mathcal V$ which is an extension $0 \rightarrow I \rightarrow M \rightarrow Q \rightarrow 0$. Let $(M,A,\ast)$ be affine datum in $\mathcal V$. There is an exact sequence
\[
0 \longrightarrow H^{1}(Q,A^{I},\hat{\ast}) \stackrel{\sigma}{\longrightarrow} H^{1}(M,A^{I},\ast) \stackrel{r}{\longrightarrow} H^{1}(I,A^{I},\ast)^{\square} \stackrel{\delta_{T}}{\longrightarrow} H^{2}_{\mathcal V}(Q,A^{I},\hat{\ast}) \stackrel{\sigma}{\longrightarrow} H^{2}_{\mathcal V}(M,A^{I},\ast).
\]
\end{theorem}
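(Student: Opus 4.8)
The plan is to mimic the classical Hochschild--Serre construction from group cohomology, adapted to the affine-datum cohomology $H^i$ developed in the excerpt. Throughout, $A^I$ denotes the null submodule of the given affine datum $(M,A,\ast)$ consisting of those actions/cocycles that vanish appropriately on $I$, and $\hat{\ast}$ is the induced action of $Q$ on $A^I$; the existence of $\hat{\ast}$ and the fact that $H^1(Q,A^I,\hat{\ast})$ is well-defined needs to be checked first, using the description of $1^{\mathrm{st}}$-cohomology and derivations from the excerpt (the lemma computing $\mathrm{Der}(Q,I,0)$ and the definition $H^1 = \mathrm{Der}/\mathrm{PDer}$). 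The inflation maps $\sigma$ are induced by composing with the canonical projection $\pi:M\to Q$ (on $1$-cocycles: $d\mapsto d\circ\pi$; on $2$-cocycles: precompose the factor-sets with $\pi$), the restriction map $r$ is induced by restricting a derivation $d:M\to A^I$ to the ideal $I\triangleleft M$, and the transgression $\delta_T$ is the connecting map built from the fixed $2$-cocycle $T$ associated to the extension $0\to I\to M\to Q\to 0$.

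First I would define each arrow precisely and check it is well-defined on cohomology classes. For $\sigma$ at the $H^1$ level: if $d\in\mathrm{Der}(Q,A^I,\hat{\ast})$ then $d\circ\pi\in\mathrm{Der}(M,A^I,\ast)$ because $\pi$ is a surjective homomorphism and the action $\ast$ restricts compatibly; principal derivations pull back to principal derivations since $\pi$ carries $I$-twins of the identity to $Q$-twins (this uses that $\ker\pi = I$ and the characterization of $\mathrm{PDer}$ via $\mathrm{Tw}$ from the excerpt). For $r$: restriction of a derivation of $M$ to the ideal $I$ is a derivation of $I$; one must verify the image lands in the $\square$-condition subspace $H^1(I,A^I,\ast)^{\square}$ — this is the place where the $\square$-condition is \emph{defined} to be exactly the obstruction making $r$ land there, so the content is to show that for $d\in\mathrm{Der}(M,A^I,\ast)$ the restriction $d|_I$ satisfies the simplified $\square$-identity on the null submodule. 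For $\delta_T$: given $d\in\mathrm{Der}(I,A^I,\ast)^{\square}$, extend $d$ set-theoretically along a section $l:Q\to M$ of the extension to a map $\tilde d:M\to A^I$; the failure of $\tilde d$ to be a derivation of $M$ is measured by a $2$-cocycle $\theta_d$ for the datum $(Q,A^I,\hat{\ast})$, built exactly as in Theorem~\ref{thm:derivations} from the comparison terms $T^{(\alpha,\beta)}$ — i.e. $\delta_T(d)$ is the class of the obstruction $2$-cocycle, and $\mathcal V$-compatibility of $\theta_d$ follows because it arises from an honest (if not derivation-)map into an algebra of $\mathcal V$. Independence of the choice of section $l$ and of the representative $d$ modulo principal derivations changes $\theta_d$ by a $2$-coboundary, giving well-definedness.

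Then I would establish exactness at each of the four nodes. Exactness at $H^1(Q,A^I,\hat{\ast})$ is injectivity of $\sigma$: if $d\circ\pi$ is principal on $M$, then since $\pi$ is onto and $\ker\pi=I$ with $d\circ\pi$ vanishing on $I$, one recovers that $d$ itself was principal on $Q$. Exactness at $H^1(M,A^I,\ast)$: $r\circ\sigma = 0$ is immediate since $(d\circ\pi)|_I = 0$ as $I=\ker\pi$; conversely if $d|_I$ is principal on $I$, correct $d$ by a principal derivation of $M$ to assume $d|_I = 0$, and then $d$ factors through $\pi$ to give a class in $H^1(Q,A^I,\hat{\ast})$. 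Exactness at $H^1(I,A^I,\ast)^{\square}$: $\delta_T\circ r = 0$ because a derivation of $M$ extends itself, so its obstruction cocycle is a coboundary (witnessed by the $h$ measuring the failure of the section to commute with the derivation, exactly as in the $\im\psi\subseteq\ker W_T$ step of the proof of Theorem~\ref{thm:derivations}); conversely if $\delta_T(d) = 0$, the witnessing map $h$ lets one assemble a genuine derivation $\phi$ of $M$ with $\phi|_I = d$, reusing verbatim the module-homomorphism and multilinear-operation computations from the final part of the proof of Theorem~\ref{thm:derivations}. Exactness at $H^2_{\mathcal V}(Q,A^I,\hat{\ast})$: $\sigma\circ\delta_T = 0$ because inflating the obstruction cocycle $\theta_d$ back to $M$ produces a $2$-cocycle that is trivialized by the extension $\tilde d$ of $d$; conversely, if an inflated $2$-cocycle class from $Q$ dies in $H^2_{\mathcal V}(M,A^I,\ast)$, the trivializing $1$-cochain on $M$ restricts to a derivation of $I$ satisfying $\square$ whose transgression is the given class.

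The main obstacle I expect is the bookkeeping around the null submodule $A^I$ and the $\square$-condition: unlike the group case, here the affine datum on $M$ does not descend to datum on $Q$ automatically, and one has to isolate exactly the submodule $A^I$ on which the $Q$-action $\hat{\ast}$ makes sense and the coboundary relations close up. Verifying that the restriction map actually lands in $H^1(I,A^I,\ast)^{\square}$ (rather than a larger space) and that on $A^I$ the $\square$-condition collapses to the simplified form asserted in the section preamble will require care with the multisorted $2$-cocycle identities of Example~\ref{ex:multiequations} and Definition~\ref{def:3}; this is the genuinely new ingredient beyond transcribing the derivation-obstruction machinery of Theorem~\ref{thm:derivations}. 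The $\mathcal V$-compatibility tracking (as opposed to $_R\mathcal M_F$-compatibility) at the $H^2$ level is a second delicate point: one must confirm that the obstruction $2$-cocycle $\theta_d$ is not merely $_R\mathcal M_F$-compatible but genuinely $\mathcal V$-compatible, which should follow because it is realized inside an extension of algebras already in $\mathcal V$ via Theorem~\ref{thm:multirep} and Lemma~\ref{lem:semident}, but needs to be spelled out.
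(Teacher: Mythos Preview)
Your overall strategy matches the paper's: define inflation, restriction, and transgression, then verify exactness at each node by the standard two-inclusion argument. The paper's proof proceeds exactly this way, and your descriptions of $\sigma$, $r$, and the four exactness checks are essentially what the paper does.

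There is, however, a concrete confusion in your description of the transgression $\delta_T$. You propose to build it ``exactly as in Theorem~\ref{thm:derivations} from the comparison terms $T^{(\alpha,\beta)}$.'' That is the wrong template. In Theorem~\ref{thm:derivations} the Wells map $W_T$ takes a compatible pair $(\alpha,\beta)\in\mathrm{Der}\,I\times\mathrm{Der}\,Q$ and forms $T^{(\alpha,\beta)}_f(\vec{x})=\alpha\circ T_f(\vec{x})-\sum_i T_f(\vec{x})_i[\beta(\vec{x})]$; the input is a pair of derivations of the \emph{datum algebras} of the extension, and $T$ is a $2$-cocycle for datum $(Q,I,\ast)$. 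In the Hochschild--Serre situation the input is a derivation $d:I\to A^I$ into the \emph{coefficient module}, and the paper's transgression is the much simpler postcomposition
\[
\delta_T(d)=[d\circ T],\qquad (d\circ T)_+=d\circ T_+,\quad (d\circ T)_r=d\circ T_r,\quad (d\circ T)_f=d\circ T_f,
\]
with action terms given by $Q\mathrel{\hat{\ast}}A^I$. Your obstruction description (extend $d$ to $\tilde d(b,x)=d(b)$ via the splitting $M=I\rtimes_T Q$ and measure the failure of $\tilde d$ to be a derivation) does in fact compute $d\circ T$ when carried out, so you would arrive at the right map; but the reference to the $T^{(\alpha,\beta)}$ machinery is misleading and would send you chasing the wrong formulas. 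Relatedly, the paper observes that the action of $I$ on $A^I$ is null, so $A^I\rtimes_\ast I=A^I\times I$ and $\mathrm{PDer}(I,A^I,\ast)=0$; hence $H^1(I,A^I,\ast)=\mathrm{Der}(I,A^I,\ast)$ and there is no ``modulo principal derivations'' issue on that side of well-definedness, contrary to what you suggest. Once you replace your transgression by the direct $d\circ T$ definition, the $\mathcal V$-compatibility check and the exactness verifications at the last two nodes become the explicit $\square$-condition computations the paper carries out (Eq.~\eqref{eqn:squarenull} and Eq.~\eqref{eqn:100}), rather than anything borrowed from Theorem~\ref{thm:derivations}.
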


We begin by developing the terms in the statement of the theorem. Fix an extension $0 \rightarrow I \rightarrow M \rightarrow Q \rightarrow 0$ and affine datum $(M,A,\ast)$ in $\mathcal V$. By Theorem~\ref{thm:multiabel}, the multilinear operations in $A$ are all trivial and the operations in the action $M \ast A$ are all unary in $A$ and given by the sequence $\{a(f,i): f \in F, i \in [\ar f] \}$. The extension $\pi: M \rightarrow Q$ induces an action $Q \star I$ with operations given by $\{ b(f,s): f \in F, s \in [\ar f]^{\ast} \}$. Let $T$ be the 2-cocycle determined by the extension so that $\pi: M \rightarrow Q$ is equivalent to $p_{2}: I \rtimes_{T} Q \rightarrow Q$. For simplicity, we make the identification $M=I \rtimes_{T} Q \stackrel{\pi}{\rightarrow} Q$.

Let $\mathcal S^{\ast}$ be the set of operations which are formed by composing the action terms of $M \ast A$ with each other; that is, $\mathcal S^{\ast}$ is the smallest set of operations such that each $a(f,i) \in \mathcal S^{\ast}$ for $f \in F$, $i \in [\ar f]$ and whenever $t(x,\vec{y}) \in \mathcal S^{\ast}$, we have $a(f,i)(z_{1},\ldots,z_{i-1},t(x,\vec{y}),z_{i+1},\ldots,z_{\ar f}) \in \mathcal S^{\ast}$ for $f \in F$, $i \in [\ar f]$. For the ideal $I \triangleleft M$, the \emph{null} submodule of $A$ determined by $I$ is 
\begin{align}\label{eqn:induceaction}
A^{I} = \{ a \in A : t(a,\vec{m})=0 \text{ for all } t \in \mathcal S^{\ast} \text{ with some } m_{j} \in I \}.
\end{align}
We see that $A^{I}$ is indeed a submodule since the action terms are unary in $A$.

Let us first observe for the 2-cocycle $T$ that for the group part we have $T_{+}(0,x) = T_{+}(x,0) = 0$ which implies we can write $\left\langle b, x \right\rangle = \left\langle b, 0 \right\rangle + \left\langle 0, x \right\rangle$ in $M = I \rtimes_{T} Q$. Note by realization in $A \rtimes_{\ast} M$, for $f \in F$ with $\ar f = n$, if we take $a \in A^{I}$ and write $m_i = \left\langle b_{i}, x_{i} \right\rangle \in I \times Q$, then 
\begin{align*}
\left\langle \ a(f,i)(m_{1},\ldots,a_{i},\ldots,m_{n}), 0 \ \right\rangle &= F_{f} \big( \left\langle 0, m_{1} \right\rangle, \ldots, \left\langle a, 0 \right\rangle, \ldots, \left\langle 0, m_{n} \right\rangle \big) \\
&= F_{f} \big( \left\langle 0, \left\langle b_{1}, 0 \right\rangle + \left\langle 0, x_{1} \right\rangle \right\rangle, \ldots, \left\langle a, 0 \right\rangle, \ldots, \left\langle 0, m_{n} \right\rangle \big) \\
&= F_{f} \big( \left\langle 0, \left\langle b_{1}, 0 \right\rangle \right\rangle + \left\langle  0, \left\langle 0, x_{1} \right\rangle \right\rangle, \ldots,\left\langle a, 0 \right\rangle, \ldots, \left\langle 0, m_{n} \right\rangle \big) \\
&= F_{f} \big( \left\langle 0, \left\langle b_{1}, 0 \right\rangle \right\rangle, \ldots,\left\langle a, 0 \right\rangle, \ldots, \left\langle 0, m_{n} \right\rangle \big) \\
&\quad \quad + \ \ F_{f} \big( \left\langle  0, \left\langle 0, x_{1} \right\rangle \right\rangle, \ldots,\left\langle a, 0 \right\rangle, \ldots, \left\langle 0, m_{n} \right\rangle \big) \\
&= F_{f} \big( \left\langle 0, \left\langle 0, x_{1} \right\rangle \right\rangle, \ldots, \left\langle a, 0 \right\rangle, \ldots, \left\langle 0, m_{n} \right\rangle \big) \\
&\vdots \\
&= F_{f} \big( \left\langle 0, \left\langle 0, x_{1} \right\rangle \right\rangle, \ldots,\left\langle a, 0 \right\rangle, \ldots, \left\langle 0, \left\langle 0, x_{1} \right\rangle \right\rangle\big)
\end{align*}
which establishes that 
\begin{align}\label{eqn:welldef}
a(f,i) \left( m_{1},\ldots,a,\ldots,m_{n} \right) = a(f,i)\left( \left\langle 0,x_{1} \right\rangle,\ldots,a,\ldots,\left\langle 0,x_{n} \right\rangle \right).
\end{align}
Inductively on the composition of action terms we see that for $t(x,\vec{y}) \in \mathcal S^{\ast}$, $a \in A^{B}$ and $m_i = \left\langle b_{i}, x_{i} \right\rangle \in I \times Q$ that 
\begin{align}\label{eqn:welldef2}
t(a,\vec{m}) &= t(a,\left\langle 0, \vec{x}\right\rangle) &\quad \quad \quad \quad \quad \quad &(t(x,\vec{y}) \in \mathcal S^{\ast}).
\end{align}

For the ideal $I \triangleleft M$, we define an action $Q \hat{\ast} A^{I}$ such that $(Q,A^{I},\hat{\ast})$ is affine datum in $\mathcal V$. For each $f \in F$ and $i \in [ \ar f]$, define
\begin{align}\label{eqn:actioninflated}
\hat{a}(f,i)(x_{1},\ldots,x_{i-1},a,x_{i+1},\ldots,x_{\ar f}) := a(f,i) \left( \left\langle 0,x_{1} \right\rangle, \ldots, \left\langle 0,x_{i-1} \right\rangle, a, \left\langle 0, x_{i+1} \right\rangle, \ldots, \left\langle 0,x_{\ar f} \right\rangle \right)
\end{align}
for any $a \in A^{I}, x_{1},\ldots,x_{i-1},x_{i+1},\ldots,x_{\ar f} \in Q$. By Eq~\eqref{eqn:welldef2} and because the definition in Eq~\eqref{eqn:induceaction} is taken over all terms in $\mathcal S^{\ast}$, we see that the action is well-defined and closed on $A^{I}$. We can succinctly write the definition relating the two actions as
\begin{align}\label{eqn:definfl}
\hat{a}(f,i) \left( \pi(\vec{m}), \vec{a} \right) = a(f,i) \left( \vec{m}, \vec{a} \right)
\end{align}
for $f \in F$, $\vec{m} \in M^{\ar f}$. If we note that for terms $s$ in the signature of the variety $\mathcal V$, the action terms $s^{\ast,T^{\ast}}$ from Lemma~\ref{lem:semident} are sums of iterated action terms from $\mathcal S^{\ast}$, then it is easy to that $Q \hat{\ast} A^{I}$ is also a $\mathcal V$-compatible action.

The \emph{inflation} maps 
\begin{align*}
\sigma &: H^{1}( Q,A^{I},\hat{\ast} ) \rightarrow H^{1}( M,A,\ast ) \quad \quad \quad \quad \sigma : H^{2}_{\mathcal V}( Q,A^{I},\hat{\ast} ) \rightarrow H^{2}_{\mathcal V}( M,A,\ast )
\end{align*}
are defined by precomposition for derivations $\sigma([d]) := [d \circ \pi]$ and for 2-cocycles $\sigma([T]) := [T \circ \pi]$ where $(T \circ \pi)_{+}(x,y) := T_{+}(\pi(x),\pi(y))$, $(T \circ \pi)_{+}(x) := T_{r}(\pi(x))$ and $(T \circ \pi)_{f}(x_{1},\ldots,x_{\ar f}) := T_{f}(\pi(x_{1}),\ldots,\pi(x_{\ar f}))$ for $r \in R$, $f \in F$ with action terms given by $M \ast A$. Let us show inflation is well-defined on cohomology; that is, precomposition maps coboundaries to coboundaries. First, suppose $G$ is a 2-coboundary for $(Q,A^{I},\hat{\ast})$ witnessed by $h: Q \rightarrow A^{I}$. Using (B1)-(B3) for trivial multilinear operations and unary actions, we see that for $z,w \in M, \vec{z} \in M^{\ar f}$ and $x = \pi(w), y = \pi(z), \vec{x} = \pi(\vec{z})$ we have
\begin{align*}
G_{+}(\pi(w),\pi(z)) = G_{+}(x,y) = h(x) + h(y) - h(x+y) &= h(\pi (w)) + h (\pi (z)) - h(\pi (x)+ \pi(y)) \\
&= h \circ \pi (w) + h \circ \pi (z) - h \circ \pi (x+y),
\end{align*}
\begin{align*}
G_{r}(\pi(w)) = G_{r}(x) = r \cdot h(x) - h(r \cdot x) =  r \cdot h(\pi(w)) - h(r \cdot \pi(w)) &= r \cdot h \circ \pi(w) - h \circ \pi (r \cdot w)
\end{align*}
and by the definition in Eq~\eqref{eqn:definfl}
\begin{align*}
G_{f}(\pi(\vec{z})) = G_{f}(\vec{x}) = \sum_{i \in [\ar f]^{\ast}} \hat{a}(f,i)(\vec{x},h(\vec{x})) - h(f^{Q}(\vec{x})) &= \sum_{i \in [\ar f]^{\ast}} \hat{a}(f,i)(\pi(\vec{z}),h(\pi(\vec{z}))) - h(f^{Q}(\pi(\vec{z}))) \\
&= \sum_{i \in [\ar f]^{\ast}} a(f,i)(\vec{z},h \circ \pi(\vec{z})) - h \circ \pi (f^{M}(\vec{z})).
\end{align*}
This shows $G \circ \pi$ is a 2-coboundary $(Q,A,\ast)$ witnessed by $h \circ \pi$. Observe that since $Q \hat{\ast} A^{I}$ is $\mathcal V$-compatible, we see that $[T] \in H^{2}_{\mathcal V}(Q,A^{I},\hat{\ast})$ if and only if $T$ is strictly $\mathcal V$-compatible. Then for the same reason as in the calculation verifying the (B3) property for the 2-coboundary above, $T \circ \pi$ is $\mathcal V$-compatible using the action $M \ast A$; that is, $[T \circ \pi] \in H^{2}_{\mathcal V}(M,A,\ast)$.

We now consider $1^{\mathrm{st}}$-cohomology. If we take a derivation $d: Q \rightarrow A^{I}$ of the action $\hat{\ast}$, then we can verify for $m_{i} = \left\langle b_i, x_{i} \right\rangle$ and $f \in F$ with $n = \ar f$ that
\begin{align*}
d \circ \pi \left( f^{M}(\vec{m}) \right) = d \left( f^{Q}(\vec{x}) \right) = \sum_{i \in [n]} \hat{a}(f,i) \left( \vec{x}, d(\vec{x}) \right) =  \sum_{i \in [n]} a(f,i) \left( \vec{m}, d \circ \pi (\vec{m}) \right),
\end{align*}
and so $d \circ \pi$ is a derivation of $(M,A,\ast)$. Now let $\gamma$ be a principal stabilizing automorphism of the datum $(Q,A^{I},\hat{\ast})$; that is, there is a term $t(x,\vec{y})$ and elements $\vec{c},\vec{d} \in (A^{I})^{n}$ such that in $A^{I} \rtimes_{\hat{\ast}} Q$
\begin{align*}
\left\langle a - d_{\gamma}(x), x \right\rangle = \gamma(a,x) = F_{t}\left( \left\langle a, x \right\rangle, \left\langle c_{1}, 0 \right\rangle, \ldots, \left\langle c_{n}, 0 \right\rangle \right) &=  \left\langle a + t^{\hat{\ast},T^{\hat{\ast}}}((0,\vec{c}),(x,\vec{0})) , x \right\rangle \\
\left\langle a, x \right\rangle = F_{t}\left( \left\langle a, x \right\rangle, \left\langle d_{1}, 0 \right\rangle, \ldots, \left\langle d_{n}, 0 \right\rangle \right) &=  \left\langle a + t^{\hat{\ast},T^{\hat{\ast}}}((0,\vec{d}),(x,\vec{0})) , x \right\rangle\\
\end{align*}
where $a=t(a,\vec{c})=t(a,\vec{d})$ and $d_{\gamma}$ is a derivation with $d_{\gamma}(x) = t^{\hat{\ast},T^{\hat{\ast}}}((0,\vec{c}),(x,\vec{0}))$ (see Remark~\ref{rem:principal}). Then in the semidirect product $A \rtimes_{\ast} M$ we have
\begin{align*}
F_{t} &\left( \left\langle a, \left\langle b,x \right\rangle \right\rangle, \left\langle c_{1}, \left\langle 0,0\right\rangle \right\rangle, \ldots, \left\langle c_{n}, \left\langle 0,0\right\rangle \right\rangle \right) - F_{t}\left( \left\langle a, \left\langle b,x \right\rangle \right\rangle, \left\langle d_{1}, \left\langle 0,0\right\rangle \right\rangle, \ldots, \left\langle d_{n}, \left\langle 0,0\right\rangle \right\rangle \right) + \left\langle a, \left\langle b,x \right\rangle \right\rangle \\
&= \left\langle a + t^{\ast,T^{\ast}} \left( (0,\vec{c}),(b,x),(0,0),\ldots,(0,0) \right) , t^{M}((b,x),(0,0),\ldots,(0,0)) \right\rangle \\
&- \left\langle a + t^{\ast,T^{\ast}} \left( (0,\vec{d}),(b,x),(0,0),\ldots,(0,0) \right) , t^{M}((b,x),(0,0),\ldots,(0,0)) \right\rangle + \left\langle a, \left\langle b,x \right\rangle \right\rangle \\
&= \left\langle a + t^{\hat{\ast},T^{\hat{\ast}}} \left( (0,\vec{c}),(x,\vec{0}) \right) , x \right\rangle - \left\langle a + t^{\hat{\ast},T^{\hat{\ast}}} \left( (0,\vec{d}),(x,\vec{0}) \right) , x \right\rangle + \left\langle a, \left\langle b,x \right\rangle \right\rangle \\
&= \left\langle a - d(x), x \right\rangle - \left\langle a, x \right\rangle + \left\langle a, \left\langle b,x \right\rangle \right\rangle \\
&= \left\langle a - d \circ \pi (b,x), \left\langle b,x \right\rangle \right\rangle .
\end{align*}
Now that we have shown inflation is well-defined on cohomology classes, it is easy to see that it is a homomorphism.

There is a \emph{restriction} map
\begin{align*}
r: H^{1}( M,A,\ast ) \rightarrow H^{1}( I,A,\ast )
\end{align*} 
defined by $r([d]):= [d|_{I}]$; that is, we take the cohomology class of the derivation $d$ restricted to $I$. Note that in the polynomial which defines a principal stabilizing automorphism in $A \rtimes_{T^{\ast}} M$, the constants come from the isomorphic copy of $A$; thus, the restriction of the automorphism to $A \rtimes_{T^{\ast}} I \leq A \rtimes_{T^{\ast}} M$ is still a principal stabilizing automorphism according to definition. It is easy to see that the restriction map is a homomorphism.

We are interested in a property of the derivations which are in the image of the restriction map. Fix a derivation $d: M \rightarrow A$, $f \in F$ with $n=\ar f$ and for simplicity, let us consider an action term for the coordinates $s = \{i, i+1,\ldots,n\}$. Then 
\begin{align*}
d|_{I} \big( b(f,s)(x_{1},\ldots,x_{i-1},b_{i},\ldots,b_{n}) \big) &= d \left( f^{M}( \left\langle 0, x_{1} \right\rangle,\ldots,\left\langle 0, x_{i-1} \right\rangle, \left\langle b_{i}, 0 \right\rangle,\ldots, \left\langle b_{n}, 0 \right\rangle ) \right) \\
&= \sum_{k=1}^{i-1} a(f,k)( \left\langle 0, x_{1} \right\rangle,\ldots, d(0,x_{k}) ,\ldots, \left\langle 0, x_{i-1} \right\rangle, \left\langle b_{i}, 0 \right\rangle,\ldots, \left\langle b_{n}, 0 \right\rangle ) \\
&+ \quad \sum_{k=i}^{n} a(f,k)( \left\langle 0, x_{1} \right\rangle, \ldots, \left\langle 0, x_{i-1} \right\rangle, \left\langle b_{i}, 0 \right\rangle,\ldots,d(b_{k},0), \ldots, \left\langle b_{n}, 0 \right\rangle ) \\
&= \sum_{k=1}^{i-1} a(f,k)( \left\langle 0, x_{1} \right\rangle,\ldots, h(x_{k}) ,\ldots, \left\langle 0, x_{i-1} \right\rangle, \left\langle b_{i}, 0 \right\rangle,\ldots, \left\langle b_{n}, 0 \right\rangle ) \\
&+ \quad \sum_{k=i}^{n} a(f,k)( \left\langle 0, x_{1} \right\rangle, \ldots, \left\langle 0, x_{i-1} \right\rangle, \left\langle b_{i}, 0 \right\rangle,\ldots,d|_{I}(b_{k}), \ldots, \left\langle b_{n}, 0 \right\rangle ) 
\end{align*}
where we have written $h(x):=d(0,x) : Q \rightarrow A$. This determines a condition on derivations which relates the given affine action $M \ast A$ and the action $Q \star I$ induced from the extension $\pi: M \rightarrow Q$. A derivation $d \in \mathrm{Der}(I,A,\ast)$ satisfies the \emph{$\square$-condition} if there exists a map $h: Q \rightarrow A$ such that 
\begin{align}\label{eqn:square}
d \left( b(f,s)(\vec{x}, \vec{b} ) \right) = \sum_{i \not\in s} a(f,i)\left( \left\langle \vec{0}, \vec{x} \right\rangle_{s} \left\langle \vec{b}, \vec{0} \right\rangle , h(\vec{x}) \right) \ + \ \sum_{i \in s} a(f,i)\left( \left\langle \vec{0}, \vec{x} \right\rangle_{s} \left\langle \vec{b}, \vec{0} \right\rangle , d(\vec{b}) \right)
\end{align}
for all $f \in F$, $s \in [\ar f]^{\ast}$. Note we are using the convention to write the tuple $\left\langle \vec{b}, \vec{x} \right\rangle = \left( \left\langle b_{1}, x_{1} \right\rangle,\ldots, \left\langle b_{n}, x_{n} \right\rangle \right)$ and $\left\langle \vec{0}, \vec{x} \right\rangle_{s} \left\langle \vec{b}, \vec{0} \right\rangle$ is the tuple formed by substituting the s-coordinates of $\left\langle \vec{b}, \vec{0} \right\rangle$ into the s-coordinates of $\left\langle \vec{0}, \vec{x} \right\rangle$. Since derivations are linear module transformation and the right-hand side of the definition in Eq~\eqref{eqn:square} is in terms of the unary actions, the set of derivations which satisfy the $\square$-condition forms a subgroup. We then define  
\[
H^{1}(I,A,\ast)^{\square} = \{ [d] \in H^{1}(I,A,\ast): d \text{ satisfies the } \square-\text{condition} \}
\]
which forms a subgroup of $1^{\mathrm{st}}$-cohomology. The above calculation can be extended to show the restriction map $r: H^{1}( M,A,\ast ) \rightarrow H^{1}( I,A,\ast )^{\square}$ is well-defined.

We now consider our analogue of the trangression map which takes $1^{\mathrm{st}}$-cohomology to $2^{\mathrm{nd}}$-cohomology. In order for the codomain of the transgression to be contained in the set of $\mathcal  V$-compatible 2-cocycles, we must enforce the $\square$-condition on derivations of the datum $(I,A^{I},\ast)$; that is, on the ``coefficients'' $A^{I}$ and not just $A$. Let us consider why this is so. Take $[d] \in H^{1}(I,A^{I},\ast)^{\square}$ where $d$ satisfies the $\square$-condition; thus, there is a map $h: Q \rightarrow A^{I}$ which satisfies Eq~\eqref{eqn:square}. Consider $|s| > 1$. Then for any $i \not\in s$, $a(f,i)\left( \left\langle \vec{0}, \vec{x} \right\rangle_{s} \left\langle \vec{b}, \vec{0} \right\rangle , h(\vec{x}) \right) = 0$ since both $\left\langle b_{j}, 0 \right\rangle$ for $j \in s$ and $h(x_{i})$ appear in the coordinates. For any $i \in s$, there is $i \neq j \in$ such that both $\left\langle b_{j}, 0 \right\rangle$ and $d(b_{i}) \in A^{I}$ appear in the coordinate; thus, we again see that $a(f,i)\left( \left\langle \vec{0}, \vec{x} \right\rangle_{s} \left\langle \vec{b}, \vec{0} \right\rangle , d(\vec{b}) \right) = 0$. Altogether, we conclude that 
\begin{align}\label{eqn:square1}
d \big( b(f,s)(\vec{x},\vec{b}) \big) = 0 \quad \quad \text{for} \quad \quad |s| > 1.
\end{align}
Now fix $s=\{k\}$. Then for the same reason we have $a(f,i)\left( \left\langle \vec{0}, \vec{x} \right\rangle_{s} \left\langle \vec{b}, \vec{0} \right\rangle , h(\vec{x}) \right) = 0$ for $i \neq k$. Then in this case we have
\begin{align}\label{eqn:square2}
d \big( b(f,i)(\vec{x},\vec{b}) \big) = a(f,i)( \left\langle 0,x_{1} \right\rangle, \ldots, d(b_{i}), \ldots, \left\langle 0,x_{n} \right\rangle ) = \hat{a}(f,i)(\vec{x},d(\vec{b})).
\end{align}
So for reference later, we conclude that a derivation $d$ of the datum $(I,A^{I},\ast)$ which satisfies the $\square$-condition implies
\begin{align}\label{eqn:squarenull}
d \big( b(f,i)(\vec{x},\vec{b}) \big) = \hat{a}(f,i)(\vec{x},d(\vec{b})) \quad \quad \quad \text{ and } \quad \quad \quad  d \big( b(f,s)(\vec{x},\vec{b}) \big) = 0 \quad \quad \text{for} \quad \quad |s| > 1.
\end{align}
We can very roughly summarize Eq~\eqref{eqn:square1} and Eq~\eqref{eqn:square2} by stating that the $\square$-condition on the datum $(I,A^{I},\ast)$ implies the relation
\begin{align}\label{eqn:dersquare}
d(Q \mathrel{\star} I) = Q \mathrel{\hat{\ast}} d(I).
\end{align}
We can also observe for the semidirect product $A^{I} \rtimes_{\ast} I = A^{I} \times I$ since the action of $I$ on $A^{I}$ is null. Then by Remark~\ref{rem:principal} we see that the principal derivations of the datum $(I,A^{I},\ast)$ must be trivial; therefore, $H^{1}(I,A^{I},\ast) = \mathrm{Der}(I,A^{I},\ast)$.

The \emph{transgression} map
\begin{align*}
\partial : H^{1}(I,A^{I},\ast)^{\square} \times H^{2}_{\mathcal V}(Q,I) \rightarrow H^{2}_{\mathcal V}(Q,A^{I},\hat{\ast})
\end{align*}
is defined by $\delta ([d],[T]):= [d \circ T]$ where 
\[
(d \circ T)_{+} = d \circ T_{+}, \quad \quad (d \circ T)_{r} = d \circ T_{r}, \quad \quad (d \circ T)_{f} = d \circ T_{f}
\]
and the action terms are exactly the action terms of $Q \mathrel{\hat{\ast}} A^{I}$. In order to show that it is well-defined on cohomology classes, let us first take a derivation $d:I \rightarrow A^{I}$. Then observe that for any $f \in F$ with $n=\ar f$,
\begin{align}\label{eqn:nullder}
d ( f^{I}(\vec{b}) ) = \sum_{i \in [n]} a(f,i) \left( \left\langle \vec{0}, \vec{b} \right\rangle, d(\vec{b}) \right) = 0
\end{align}
since $d(b_i) \in A^{I}$. This fact, together with Eq~\eqref{eqn:square1} and Eq~\eqref{eqn:square2}, implies that if we take $[d] \in H^{1}(I,A^{I},\ast)^{\square}$ and $T \sim T'$ witnessed by $h: Q \rightarrow I$, then $d \circ T \sim d \circ T'$ witnessed by $d \circ h: Q \rightarrow A^{I}$; therefore, the transgression does not depend on our choice of representative 2-cocycle for the extension $\pi: M \rightarrow Q$.

The final step is to verify that $[d \circ T] \in H^{2}_{V}(Q, A^{I},\hat{\ast})$ for any $d \in H^{1}(I,A^{I},\ast)^{\square}$. The relation Eq~\eqref{eqn:dersquare} expresses the central reason why $d \circ T$ should be a $\mathcal V$-compatible 2-cocycle for datum $(Q, A^{I},\hat{\ast})$, but a more formal argument is based on the representation of terms evaluated in the algebra $A^{I} \rtimes_{d \circ T} Q$. Let $p_{1}: A^{I} \rtimes_{d \circ T} Q \rightarrow A^{I}$ denote the first-projection set map. Then using Eq~\eqref{eqn:square1}, Eq~\eqref{eqn:square2} and Eq~\eqref{eqn:nullder}, one can show inductively on the generation of terms that for any term $t(\vec{x})$ in the language of $\mathcal V$, we have the evaluation  
\begin{align}\label{eqn:100}
F_{t} \left( \left\langle d \circ p_{1}(\vec{m}), \pi(\vec{m}) \right\rangle \right) = \left\langle d \circ p_{1} \left( F_{t}^{M}(\vec{m}) \right) , t^{Q}(\pi(\vec{m})) \right\rangle \quad \quad \quad \quad \quad \left( \vec{m} \in (A^{I} \times Q )^{\ar t} \right)
\end{align}
computed in the algebra $A^{I} \rtimes_{d \circ T} Q$. Since we have already seen that the action $Q \hat{\ast} A^{I}$ is $\mathcal V$-compatible, the 2-cocycle $d \circ T$ is $\mathcal V$-compatible if and only if it is strictly $\mathcal V$-compatible. Take $t=s \in \mathrm{Id} \, \mathcal V$. Since $M \in \mathcal V$, we have $F^{M}_{t}(\vec{m}) = F^{M}_{s}(\vec{m})$. Then using Remark~\ref{remark:affineident} for the evaluation of terms with affine datum, Eq~\eqref{eqn:100} yields
\begin{align*}
t^{\hat{\ast},d \circ T}( d \circ p_{1}(\vec{m}), \pi(\vec{m}) ) + t^{\partial,d \circ T}(\pi(\vec{m})) &= d \circ p_{1} \left( F_{t}^{M}(\vec{m}) \right) \\
&= d \circ p_{1} \left( F_{s}^{M}(\vec{m}) \right) \\
&= s^{\hat{\ast},d \circ T}( d \circ p_{1}(\vec{m}), \pi(\vec{m}) ) + s^{\partial,d \circ T}(\pi(\vec{m}))
\end{align*}
Since the action terms of $d \circ T$ are given by the $\mathcal V$-compatible action $Q \hat{\ast} A^{I}$, we conclude from the above that $t^{\partial,d \circ T}(\pi(\vec{m})) = s^{\partial,d \circ T}(\pi(\vec{m}))$; therefore, $d \circ T$ is strictly $\mathcal V$-compatible.

\begin{proof}(of Theorem~\ref{thm:HSexact})
We show exactness at each of the groups.

$H^{1}(Q,A^{I},\hat{\ast})$ : Take $[d] \in H^{1}(Q,A^{I},\hat{\ast})$ such that $\sigma([d]) = [d \circ \pi] = 0$. Then there is a term $t(x,\vec{y})$ and $\vec{c},\vec{d} \in (A^{I})^{n}$ such that
\begin{align*}
\left\langle a - d \circ \pi (b,x), \left\langle b,x \right\rangle \right\rangle &= F_{t} \left( \left\langle a, \left\langle b,x \right\rangle \right\rangle, \left\langle c_{1} , \left\langle 0, 0 \right\rangle \right\rangle, \ldots, \left\langle c_{n}, \left\langle 0, 0 \right\rangle \right\rangle \right) \\
&= \left\langle t^{A^{I}}(a,\vec{c}) + t^{\ast,T^{\ast}}((a,\vec{c}), \left\langle b,x \right\rangle) , t^{M}(\left\langle b, x \right\rangle,\left\langle 0, 0 \right\rangle, \ldots, \left\langle 0, 0 \right\rangle ) \right\rangle \\
  \left\langle a, \left\langle b,x \right\rangle \right\rangle &= F_{t} \left( \left\langle a, \left\langle b,x \right\rangle \right\rangle, \left\langle d_{1} , \left\langle 0, 0 \right\rangle \right\rangle, \ldots, \left\langle d_{n}, \left\langle 0, 0 \right\rangle \right\rangle \right)
\end{align*}
in the semidirect product $A^{I} \rtimes_{\ast} M$. Since $a \in A^{I}$ we can rewrite in terms of the induced action $Q \hat{\ast} A^{I}$
\begin{align*}
a - d(x) = a - d \circ \pi (0,x) &= t^{A^{I}}(a,\vec{c}) + t^{\ast,T^{\ast}}((a,\vec{c}), \left\langle 0,x \right\rangle) = t^{A^{I}}(a,\vec{c}) + t^{\hat{\ast},T^{\hat{\ast}}}((a,\vec{c}), x).
\end{align*}
Then 
\begin{align*}
F_{t}\left( \left\langle a, x \right\rangle,\ldots, \left\langle c_{1}, 0 \right\rangle, \ldots, \left\langle c_{n}, 0 \right\rangle \right) &= \left\langle t^{A^{I}}(a,\vec{c}) +t^{\hat{\ast},T^{\hat{\ast}}}((a,\vec{c}), x), t^{Q}(x,0,\ldots,0) \right\rangle  = \left\langle a - d(x), x  \right\rangle. 
\end{align*}
A similar calculation applies to the tuple $\vec{d} \in (A^{I})^{n}$ to show $d$ is a principal derivation of the datum $(Q,A^{I},\hat{\ast})$.

$H^{1}(M,A^{I},\ast)$ : It is clear that $r \circ \sigma = 0$. Take $[d] \in H^{1}(M,A^{I},\ast)$ such that $r(d) = d|_{I} \equiv 0$. Note in $M=I \times_{T} Q$ we can write $d(b,x) = d \big( \left\langle b, 0 \right\rangle + \left\langle 0, x \right\rangle \big) = d(b,0) + d(0,q) = d(0,q)$. Define $\gamma: Q \rightarrow A^{I}$ by $\gamma(x):= d(0,q)$. Then $d = \gamma \circ \pi$ shows $\sigma([\gamma]) = [d]$. That $\gamma$ is a derivation of the datum $(Q,A^{I},\hat{\ast})$ follows from that fact that $d$ is independent of the first-coordinate. We have shown $\im \sigma = \ker r$.

$H^{1}(I,A^{I},\ast)^{\square}$ : Take $[d] \in H^{1}(M,A^{I},\ast)$. We want to show $\delta_{T} \circ r ([d]) = [d|_{I} \circ T] = 0$ in $H^{2}_{\mathcal V}(Q,A^{I},\hat{\ast})$. Define $\psi: Q \rightarrow A^{I}$ by $\psi(x):= d(0,x)$. Then we see that for $f \in F$ with $n = ar f$,
\begin{align*}
\sum_{i \in [n]} \hat{a}(f,i)(x_{1},\ldots,\psi(x_{i}),\ldots,x_{n}) &= \sum_{i \in [n]} a(f,i)(\left\langle 0 ,x_{1} \right\rangle,\ldots,d0,(x_{i}),\ldots,\left\langle 0 ,x_{n} \right\rangle) \\
&= d\left( f^{M}\left( \left\langle 0 ,x_{1} \right\rangle,\ldots, \left\langle 0 ,x_{n} \right\rangle \right) \right) \\
&= d \left( T_{f}(\vec{x}), f^{Q}(\vec{x}) \right) \\
&= d \left( T_{f}(\vec{x}), 0 \right) + d\left(0, f^{Q}(\vec{x}) \right) = d|_{I} \circ T_{f}(\vec{x}) + \psi \left( f^{Q}(\vec{x}) \right).
\end{align*}
Similarly, we have
\begin{align*}
d|_{I} \circ T_{+}(x,y) + \psi(x+y) = d \left( T_{+}(x,y), x+y \right) = d\left( \left\langle 0, x \right\rangle + \left\langle 0, y \right\rangle \right) = d(0,x) + d(y) = \psi(x) + \psi(y)
\end{align*}
and
\begin{align*}
d|_{I} \circ T_{r}(x) + \psi(r \cdot x) = d \left( T_{r}(x), r \cdot x \right) = d \left( r \cdot \left\langle 0, x \right\rangle \right) = r \cdot d(0,x) = r \cdot \psi(x).
\end{align*}
Altogether, we have shown $\psi$ witnesses that $d|_{I} \circ T$ is a 2-coboundary of the datum $(Q,A^{I},\hat{\ast})$.

Now, take $d \in H^{1}(I,A^{I},\ast)^{\square}$ such that $[d \circ T]=0$. Then there is a map $h: Q \rightarrow A^{I}$ such that
\begin{enumerate}

	\item $d \circ T_{+}(x,y) = h(x) + h(y) - h(x+y)$;
	
	\item $d \circ T_{r}(x) = r \cdot h(x) - h(r \cdot x)$;
	
	\item $d \circ T_{f}(\vec{x}) = \sum_{i \in [n]} \hat{a}(f,i)(x_{1},\ldots,h(x_{i}),\ldots,x_{n}) - h(f^{Q}(\vec{x}))$  \quad \quad \quad \quad \quad \quad $(f \in F, n = \ar f)$.

\end{enumerate}
Define $\gamma: M \rightarrow A^{I}$ by $\gamma(b,x) = d(b) + h(x)$. Since we see that $\gamma|_{I} = d$, we need to show $\gamma$ is a derivation. This can be done by using the $\square$-condition and (3) above for $f \in F$ with $n = \ar f$: calculating, we find
\begin{align*}
\gamma \circ F_{f}^{M} \big( \left\langle b_{1}, x_{1} \right\rangle, \ldots, \left\langle b_{n}, x_{n} \right\rangle  \big) &= \gamma \left( f^{I}(\vec{b}) + \sum_{s \in [n]^{\ast}} b(f,s)(\vec{x},\vec{b}) + T_{f}(\vec{x}), f^{Q}(\vec{x}) \right) \\
&= d \left(f^{I}(\vec{b}) \right) + d \left( \sum_{s \in [n]^{\ast}} b(f,s)(\vec{x},\vec{b})  \right) + d \circ T_{f}(\vec{x}) + h \left( f^{Q}(\vec{x}) \right) \\ 
&= \sum_{i \in [n]} \hat{a}(f,i)(x_{1},\ldots,d(b_{i}),\ldots,x_{n}) + d \circ T_{f}(\vec{x}) + h \left( f^{Q}(\vec{x}) \right) \\
&= \sum_{i \in [n]} a(f,i)( \left\langle b_{1}, x_{1} \right\rangle, \ldots,d(b_{i}),\ldots, \left\langle b_{n}, x_{n} \right\rangle ) \\
&\quad + \sum_{i \in [n]} \hat{a}(f,i)(x_{1},\ldots,h(x_{i}),\ldots,x_{n}) \\
&= \sum_{i \in [n]} a(f,i)( \left\langle b_{1}, x_{1} \right\rangle, \ldots,d(b_{i}),\ldots, \left\langle b_{n}, x_{n} \right\rangle ) \\
&\quad + \sum_{i \in [n]} a(f,i)(\left\langle b_{1}, x_{1} \right\rangle, \ldots, h(x_{i}), \ldots, \left\langle b_{1}, x_{1} \right\rangle ) \\
&=  \sum_{i \in [n]} a(f,i)( \left\langle b_{1}, x_{1} \right\rangle, \ldots,\gamma(b_{i},x_{i}),\ldots, \left\langle b_{n}, x_{n} \right\rangle ).
\end{align*}
Similarly, by using (1) and (2) above we find that
\begin{align*}
\gamma \left( \left\langle b, x \right\rangle + \left\langle c, y \right\rangle \right) = \gamma \left( \left\langle b + c + T_{+}(x,y), x + y \right\rangle \right) &= d(b) + d(c) + d \circ T_{+}(x,y) + h(x + y) \\
&= d(b) + d(c) + h(x) + h(y) \\
&= \gamma(b,x) + \gamma(c,y)
\end{align*}
and 
\begin{align*}
\gamma \left( r \cdot \left\langle b, x \right\rangle \right) = \gamma \left( r \cdot b + T_{r}(x), r \cdot x \right) = d(r \cdot b) + d \circ T_{r}(x) + h \left( r \cdot x \right) = r \cdot d(b) + r \cdot h(x) = r \cdot \gamma(b,x). \\
\end{align*}
We have shown $\im r = \ker \delta_{T}$.

$H^{2}_{\mathcal V}(Q,A^{I},\hat{\ast})$ : We first show $\sigma \circ \delta_{T} = 0$. Fix $d \in H^{1}(I,A^{I},\ast)^{\square}$ and note $\sigma \circ \delta_{T}(d) = [d \circ T \circ \pi]$. Define $s: M \rightarrow I$ by $s(m): l \circ m - m$ where $l: Q \rightarrow M$ is the lifting $l(x) = \left\langle 0, x \right\rangle \in I \rtimes_{T} Q = M$. If we write $m = \left\langle b, x \right\rangle$, then $s(m) = \left\langle 0, x \right\rangle - \left\langle b, x \right\rangle = - \left\langle b, 0 \right\rangle = \left\langle -b, 0 \right\rangle$.

Then by realization of the 2-cocycle, we have
\begin{align*}
d \circ T_{+}(\pi(m_{1}),\pi(m_{2})) &= d \Big( l \circ \pi(m_{1}) + l \circ \pi(m_{1}) - l \circ \pi(m_{1} + m_{2}) \Big) \\
&= d \Big( l \circ \pi(m_{1}) + l \circ \pi(m_{1}) - (m_{1} + m_{2}) \Big) + d \Big( (m_{1} + m_{2}) - l \circ \pi(m_{1} + m_{2}) \Big)   \\
&= d \circ s(m_{1}) +  d \circ s(m_{2}) - d \circ s( m_{1} + m_{2} )    
\end{align*}
and
\begin{align*}
d \circ T_{r}(\pi(m)) = d \Big( r \cdot ( l \circ \pi(m)) - l \circ \pi(r \cdot m) \Big) &= d \Big(  r \cdot ( l \circ \pi(m)) - r \cdot m \Big) + d \Big( r \cdot m - l \circ \pi(r \cdot m) \Big) \\
&= r \cdot (d \circ s (m)) - d \circ s (r \cdot m).
\end{align*}
For a multilinear operation $f \in F$ with $n =\ar f$, we similarly observe that
\begin{align*}
d \circ T_{f}(\pi(\vec{m})) &= d \big( f^{M}(l \circ \pi(\vec{m})) - l \circ \pi (f^{Q}(\vec{m})) \big) \\
&= d \big( f^{M}(l \circ \pi(\vec{m})) - f^{M}(\vec{m}) \big) - d \circ s ( f^{M}(\vec{m})) \\
&= d \left( - f^{I}(p_{1}(\vec{m})) - \sum_{s \in [n]^{\ast}} b(f,s)(\pi(\vec{m}), p_{1}(\vec{m}) ) \right) - d \circ s ( f^{M}(\vec{m}))   \\
&= \sum_{i \in [n]} \hat{a}(f,i)( \pi(m_{1}), \ldots, d(-p_{1}(m_{i})), \ldots, \pi(m_{n}) ) - d \circ s ( f^{M}(\vec{m}))   \\
&= \sum_{i \in [n]} \hat{a}(f,i)( \pi(m_{1}), \ldots, d \circ s (m_{i}), \ldots, \pi(m_{n}) ) - d \circ s ( f^{M}(\vec{m}))
\end{align*}
Altogether, we have shown $d \circ s: M \rightarrow A^{I}$ witnesses that $d \circ T \circ \pi$ is a 2-coboundary for the datum $(M,A^{I},\ast)$.

Now fix $[S] \in H^{2}_{\mathcal V}(Q,A^{I},\hat{\ast})$ such that $\sigma([S]) = [S \circ \pi] = 0$. By definition, there exists $h:M \rightarrow A^{I}$ such that 
\begin{enumerate}

	\item $S_{+}(\pi(m_{1}),\pi(m_{2})) = h(m_{1}) + h(m_{2}) - h(m_{1} + m_{2})$;
	
	\item $S_{r}(\pi(m)) = r \cdot h(m) - h(r \cdot m)$;
	
	\item $S_{f}(\pi(\vec{m})) = \sum_{i \in [n]} a(f,i) ( m_{1},\ldots,h(m_{i}),\ldots,m_{n} ) - h(f^{M}(\vec{m}))$   \quad \quad \quad \quad $( f \in F, n = \ar f)$.

\end{enumerate}
We can also assume that $0=S_{+}(x,0) = S(0,x) = S_{r}(0) = S_{f}(\vec{x})$ whenever some $x_{i}=0$. If we evaluate on the elements $m_{i} = \left\langle b_{i}, 0 \right\rangle$, we see that (1) - (3) above implies the restriction $h|_{I}: I \rightarrow A^{I}$ is a derivation since the left-hand side are all zero. We wish to show $h|_{I}$ satisfies the $\square$-condition according to Eq~\eqref{eqn:squarenull}. This follows by (3) since for $f \in F$ with $n = \ar f$, we have
\begin{align*}
h \left( b(f,i)(x_{1},\ldots,b_{i},\ldots,x_{n}), 0 \right) &= h \left( f^{M} \left( \left\langle 0, x_{1} \right\rangle, \ldots, \left\langle b_{i}, 0 \right\rangle, \ldots, \left\langle 0, x_{n} \right\rangle \right) \right) \\
&= \sum_{k \in [n]} a(f,i) ( \left\langle 0, x_{1} \right\rangle, \ldots, h(0,x_{k}) ,\ldots , \left\langle  b_{i}, 0 \right\rangle ,\ldots, \left\langle 0, x_{n} \right\rangle ) \\
&\quad - S_{f}(x_{1},\ldots,0,\ldots,x_{n})\\
&=  a(f,i) ( \left\langle 0, x_{1} \right\rangle, \ldots, h( b_{i}, 0), \ldots, \left\langle 0, x_{n} \right\rangle ) \\
&= \hat{a}(f,i)(x_{1},\ldots,h|_{I}(b_{i}),\ldots,x_{n})
\end{align*}
since $\im h \subseteq A^{I}$. The same reason also shows $h \left( b(f,i)(x_{1},\ldots,b_{i},\ldots,x_{n}), 0 \right) = 0$.

Let us observe from (1) above that $0 = S_{+}(0,x) = S_{+}(\pi(b,0),\pi(0,x)) = h(b,0) + h(0,x) - h ( \left\langle b, 0 \right\rangle + \left\langle 0, x \right\rangle ) = h(b,0) + h(0,x) - h(a,x)$; thus, we can write $h(a,x) = h(a,0) + h(0,x)$. If we define $\gamma: Q \rightarrow A^{I}$ by $\gamma(x):=h(0,x)$, then we see that for $f \in F$ with $n = \ar f$,
\begin{align*}
h|_{I} \circ T_{f}(\vec{x}) = h(T_{f}(\vec{x}),0) &= h(T_{f}(\vec{x}),f^{Q}(\vec{x}) ) - h(0,f^{Q}(\vec{x})) \\
&= h \left( f^{M} \left( \left\langle 0, x_{1} \right\rangle, \ldots, \left\langle 0, x_{n} \right\rangle \right) \right) - h(0,f^{Q}(\vec{x}))  \\
&= \sum_{i \in [n]} \hat{a}(f,i)(x_{1},\ldots,h(x_{i}),\ldots,x_{n} ) - S_{f}(\vec{x}) - \gamma \left( f^{Q}(\vec{x}) \right).
\end{align*}
A similar calculation for $T_{+}$ and $T_{r}$ contributes to show $[S] \in \im \delta_{T}$; altogether, $\im \delta_{T} = \ker \sigma$. The demonstration of the theorem is complete.
\end{proof}


\section{Discussion}\label{section:6}
\vspace{0.3cm}

It should be possible in both the affine and nonabelian cases to give a more concrete development of higher cohomologies by focusing on the intended interpretations.

\begin{problem}
Let $\mathcal V$ be a variety of multilinear expansions of $R$-modules and $(Q,I)$ be datum in $\mathcal V$. \`{A} la Holt \cite{holt}, is it possible to construct higher cohomologies for general datum $(Q,I)$ in $\mathcal V$ which are characterized by equivalence classes of certain ``decorated'' exact sequences ? Can these higher cohomologies be realized by equivalence classes of models of multisorted signatures extending 2-cocycles ?
\end{problem}

For our varieties of interests, is there an analogue of the description for groups of nonabelian $2^{\mathrm{nd}}$-cohomology by weak 2-functors between certain 2-categories? Is this instructive for determining what higher cohomology should be characterizing ?

\begin{problem}
For varieties of multilinear expansions of $R$-modules, is it possible to describe the general (abelan and nonabelian) cohomologies in the framework of weak functors and n-categories ?
\end{problem}

In particular, the suggestion here is that for these varieties there may be three equivalent descriptions of cohomology given be models of certain multisorted expansions of 2-cocycles together with their equational theories, isomorphism classes of certain decorated exact sequences and weak natural isomorphism classes of weak functors between certain n-categories.

\begin{acknowledgments}
The research in this manuscript was supported by NSF China Grant \#12071374. I would also like to express my sincere gratitude to Professor Shiqing Zhang and the mathematics department at Sichuan University for a very rewarding and productive research term. 
\end{acknowledgments}


\vspace{0.6cm}


\begin{thebibliography}{15}


\bibitem{agore}
  A.L. Agore, G. Militaru:
  \emph{Hochschild products and global non-abelian cohomology for algebras. Applications},
  J. Pure Appl. Algebra
  221:366-392,
  (2017)
	
	
\bibitem{conform1}
  B. Bakalov, V.G. Kac, A. Voronov:
  \emph{Cohomology of conformal algebras},
  Comm. Math. Phys.
  200:561-589,
  (1999)
	
	
\bibitem{batten1}
  P. Batten:
  \emph{Covers and multipliers of Lie algebras},
  Dissertation, North Carolina State University
  (1993)
	
	
	
	
	\bibitem{leibniz}
  J.M. Casas, E. Khmaladze, M. Ladra:
  \emph{Low-dimensional on-abelian Leibniz cohomology},
  Forum Math.,
  25:443-469,
  (2013)


\bibitem{das}
  A. Das, N. Rathee:
  \emph{A unified extension theory of Rota-Baxter algebras, dendriform algebras, and a fundamental sequence of Wells},
  arXiv:2206.09019 [math.RA]
  (2022)
		


\bibitem{elyse1}
  R. Elyse:
  \emph{Multipliers and covers of Liebniz algebras},
  Dissertation, North Carolina State University (2019)
	

\bibitem{rings}
  C.J. Everett Jr.:
  \emph{An extension theory for rings},
  Amer. J. Math.
  64:363-370,
  (1942)
	


\bibitem{higgins}
  P.J. Higgins:
  \emph{Groups with multiple operators},
  Proceedings of the London Math. Soc.
  s3-6(3):366-416,
  (1956)


\bibitem{hoch}
  G. Hochschild:
  \emph{Cohomology and representations of associative algebras},
  Duke Math. J.,
  14:921-948,
  (1947)
	
	
\bibitem{hochserre}
  G. Hochschilde, J.P. Serre:
	\emph{Cohomology of group extensions},
  Transactions of the American Mathematical Society,
  74: 110--135 (1953)


\bibitem{holt}
  D. Holt:
  \emph{An Interpretation of the Cohomology Groups $H^{n}(G,M)$},
  Journal of Algebra,
  60:307-320,
  (1979)
	

\bibitem{conform2}
  B. Hou, J. Zhao:
  \emph{Crossed Modules, non-abelian extensions of associative conformal algebras and Wells exact sequences},
  arXiv:2211.10842 [math.QA],
  (2022)
	

\bibitem{karpil}
  G. Karpilovsky:
  \emph{The Schur Multiplier},
  London Mathematical Society monographs; new ser. 2, Oxford University Press, Oxford (1987)

	

\bibitem{lie}
  N. Inassaridze, E. Khmaladze, M. Ladra:
  \emph{Non-abelian cohomology and extensions of Lie algebras},
  J. Lie Theory,
  18:413-432,
  (2008)


\bibitem{mainellis1}
  E. Mainellis:
  \emph{Multipliers and unicentral Leibniz algebras},
   arXiv:2106.13294 [math.RA]
  (2021)	


\bibitem{mainellis2}
  E. Mainellis:
  \emph{Multipliers and unicentral diassociative algebras},
  arXiv:2107.06947 [math.RA]
  (2021)	
		
		
	\bibitem{mainellis3}
  E. Mainellis:
  \emph{Multipliers and covers of perfect diassociative algebras},
   arXiv:2201.07148 [math.RA]
  (2022)


\bibitem{mckenziesnow}
  R. McKenzie, J. Snow:
  \emph{Congruence modular varieties: commutator theory and its uses},
  In: Kudryavtsev V.B., Rosenberg I.G., Goldstein M. (eds) Structural Theory of Automata, Semigroups, and Universal Algebra. NATO Science Series II: Mathematics, Physics and Chemistry, vol 207. Springer, Dordrecht,
  pp. 273-329
  (2006)
	

\bibitem{milnor}
  J. Milnor:
  \emph{Introduction to Algebraic K-theory},
  Annals of Math. Studies, vol.72, Princeton Univ.Press, Princeton (1971)
	



\bibitem{smith1}
   J. D. H. Smith:
  \emph{Conformal algebras, vertex algebras, and the logic of locality},
  Math. Slovaca
  66(2):407-420
  (2016)


\bibitem{wells}
  C. Wells:
  \emph{Automorphisms group of extensions},
  Transactions of the American Mathematical Society
  155(1):189-194
  (1971)
	
	
	
	\bibitem{wiresI}
  A. Wires:
  \emph{Extensions realizing affine datum : low-dimensional cohomology},
  arXiv:2309.16989 [math.RA]


\bibitem{wiresII}
  A. Wires:
  \emph{Extensions realizing affine datum : central extensions},
  arXiv:2312.15963 [math.RA] (2023)


\bibitem{wiresIII}
  A. Wires:
  \emph{Extensions realizing affine datum : the Wells derivation},
  arXiv:2310.00921 [math.RA].
	

\bibitem{yamaguti}
  K. Yamaguti:
  \emph{On cohmology groups of general Lie triple systems},
  Kumamoto J. Sci. A
  8:135-146
  (1967/1969)



\end{thebibliography}
\end{document}